\titleformat{\subsection}{\it}{\thesubsection.\enspace}{1pt}{}
\newtheorem{theo}{Theorem}[section]
\newtheorem{lemm}[theo]{Lemma}
\newtheorem{coro}[theo]{Corollary}
\newtheorem{prop}[theo]{Proposition}
\newtheorem{rema}[theo]{Remark}
\numberwithin{equation}{section}
\begin{document}
\title{Global regularity and large time behavior for some inviscid Oldroyd-B models in $\mathbb{R}^2$
\hspace{-4mm}
}

\author{ Wenjie $\mbox{Deng}^1$ \footnote{E-mail: detective2028@qq.com},\quad
Zhaonan $\mbox{Luo}^1$\footnote{E-mail:  1411919168@qq.com} \quad and\quad
Zhaoyang $\mbox{Yin}^{1,2}$\footnote{E-mail: mcsyzy@mail.sysu.edu.cn}\\
$^1\mbox{Department}$ of Mathematics,
Sun Yat-sen University, Guangzhou 510275, China\\
$^2\mbox{Faculty}$ of Information Technology,\\ Macau University of Science and Technology, Macau, China}

\date{}
\maketitle
\hrule

\begin{abstract}
In this paper, we are concerned with global strong solutions and large time behavior for some inviscid Oldroyd-B models. We first establish the energy estimate and B-K-M criterion for the 2-D co-rotation inviscid Oldroyd-B model. Then, we obtain global strong solutions with large data in Sobolev space by proving the boundedness of vorticity. As a corollary, we prove global existence of the corresponding inviscid Hooke model near equilibrium. Furthermore, we present global existence for the 2-D co-rotation inviscid Oldroyd-B model in critical Besov space by a refined estimate in Besov spaces with index $0$. Finally, we study large time behaviour for the noncorotation inviscid Oldroyd-B model. Applying the Fourier splitting method, we prove the $H^1$ decay rate for global strong solutions constructed by T. M. Elgindi and F. Rousset in \cite{2015Elgindi}. \\
\vspace*{5pt}
\noindent {\it 2020 Mathematics Subject Classification}: 35Q31, 76A05, 74B20, 42A38.

\vspace*{5pt}
\noindent{\it Keywords}: The inviscid Oldroyd-B models; Global strong solutions; Time decay rate.
\end{abstract}

\vspace*{10pt}

\tableofcontents

\section{Introduction}
We focus here upon the general Oldroyd-B models \cite{2015Elgindi} for visco-elastic flow:
\begin{align}\label{eq0}
\left\{\begin{array}{l}
\partial_tu+u\cdot\nabla u+\nabla P ={\rm div}~\tau+\nu\Delta u,~~~~{\rm div}~u=0,\\[1ex]
\partial_t\tau+u\cdot\nabla\tau+a\tau+Q(\nabla u,\tau)=\alpha D(u)+\mu\Delta\tau,\\[1ex]
u|_{t=0}=u_0,~~\tau|_{t=0}=\tau_0. \\[1ex]
\end{array}\right.
\end{align}
In \eqref{eq0}, $u(t,x)$ denotes the velocity of the polymeric liquid, $\tau(t,x)$ represents the symmetric tensor of constrains and $P$ is the pressure. The parameters $a$, $\mu$ and $\nu$ are nonnegative and $\alpha>0$.
Moreover,
$$Q(\nabla u, \tau)=\tau \Omega-\Omega\tau+b(D(u)\tau+\tau D(u)),$$
with $b\in[-1, 1]$, the vorticity tensor $\Omega=\frac {\nabla u-(\nabla u)^T} {2}$ and the deformation tensor $D(u)=\frac {\nabla u+(\nabla u)^T} {2}$.
For more explanations on the modeling, one can refer to \cite{1958Non}.

Taking $b=1$ and $\alpha=2$, then the general Oldroyd-B models \eqref{eq0} can be derived from the the following micro-macro models \cite{Doi1988,Zhang2007} with the Hooke potential $\mathcal{U}=\frac 1 2 |q|^2$, $\int_{\mathbb{R}^d}\psi dq =\int_{\mathbb{R}^d}\psi_0 dq= 1$ and the drag term $\sigma(u)=\nabla u$:
\begin{align}\label{eq1}
\left\{
\begin{array}{ll}
\partial_tu+u\cdot\nabla u+\nabla P = {\rm div}~\tau+\nu\Delta u,~~~~{\rm div}~u=0,\\[1ex]
\psi_t+u\cdot\nabla\psi={\rm div}_{q}~[- \sigma(u)\cdot{q}\psi+\frac a 2\nabla_{q}\psi+\frac a 2\nabla_{q}\mathcal{U}\psi]+\mu\Delta\psi, \\[1ex]
\tau_{ij}=\int_{\mathbb{R}^{d}}(q_{i}\nabla_{q_j}\mathcal{U})\psi dq-Id, \\[1ex]
u|_{t=0}=u_0,~~\psi|_{t=0}=\psi_0. \\[1ex]
\end{array}
\right.
\end{align}
In \eqref{eq1}, the polymer particles are described by the distribution function $\psi(t,x,q)$. Here the polymer elongation $q$ satisfies $q\in\mathbb{R}^d$, which means that the extensibility of the polymers is infinite and $x\in\mathbb{R}^d$.
$\tau$ is an extra-stress tensor which generated by the polymer particles effect. In general, $\sigma(u)=\nabla u$. For the co-rotation case, $\sigma(u)=\Omega$.

The micro-macro models are of great interest in many branches of physics, chemistry, and biology, which describe the system coupling fluids and polymers. In the models, a polymer is idealized as an "elastic dumbbell" consisting of two "beads" joined by a spring that can be modeled by a vector $q$. At the level of liquid, the system couples the Navier-Stokes equation or the Euler equation for the fluid velocity with a Fokker-Planck equation describing the evolution of the polymer density. (For more details, one can refer to $\cite{Masmoudi2013}$).

When $\int_{\mathbb{R}^d}\psi_0 dq = 1$, the following co-rotation inviscid Oldroyd-B model can be derived from the micro-macro model \eqref{eq1} with $\nu=0$, $\mathcal{U}=\frac 1 2 |q|^2$ and $\sigma(u) = \Omega$:
\begin{align}\label{eq2}
\left\{\begin{array}{l}
\partial_tu+u\cdot\nabla u+\nabla P={\rm div}~\tau,~~~~{\rm div}~u=0,\\[1ex]
\partial_t\tau + u\cdot\nabla\tau+a\tau+Q(\Omega,\tau)=\mu\Delta\tau,\\[1ex]
u|_{t=0}=u_0,~~\tau|_{t=0}=\tau_0,
\end{array}\right.
\end{align}
where $Q$ is the following bilinear form $Q(\Omega, \tau)=\tau \Omega(u)-\Omega(u)\tau$.	

Notice that the equations \eqref{eq2} reduces to the well-known Euler equation by taking $\tau=0$. However, taking $\tau=0$ in \eqref{eq0} with $\nu=0$, then we have $D(u)=0$, which implies $u=0$ in Sobolev spaces. The observation reveals the essential difference between \eqref{eq0} and \eqref{eq2}.

\subsection{The general Oldroyd-B models}
T. M. Elgindi and F. Rousset \cite{2015Elgindi} first proved global regularity for the 2-D Oldroyd-B type models \eqref{eq0} with $\nu=0$. Later on, T. M. Elgindi and J. Liu \cite{2015Elgindi1} obtained global strong solutions of the 3-D case under the assumption that initial data is sufficiently small. For the case $a=0$, P. Constantin, J. Wu, J. Zhao and Y. Zhu \cite{P.Constantin} established the global well-posedness of the inviscid Oldroyd-B models for fractional dissipation with small data.

Taking $\nu>0$ and $\mu=0$ in \eqref{eq0}, we obtain the classical Oldroyd-B model. In \cite{Guillope1990}, C. Guillop\'e,  and J. C. Saut first showed that the Oldroyd-B model admits a unique global strong solution in Sobolev spaces. The $L^p$-setting was given by E. Fern\'andez-Cara, F.Guill\'en and R. Ortega \cite{Fernandez-Cara}. The weak solutions of the Oldroyd-B model were proved by P. L. Lions and N. Masmoudi \cite{Lions-Masmoudi} for the case $b=0$. Notice that the problem for the case $b\neq0$ is still open, see \cite{2011Global,Masmoudi2013}. Later on, J. Y. Chemin and N. Masmoudi \cite{Chemin2001}
proved the existence and uniqueness of strong solutions in homogenous Besov spaces with critical index of regularity. Optimal decay rates for solutions to the 3-D Oldroyd-B model were obtained by M. Hieber, H. Wen and R. Zi \cite{2019OldroydB}. An approach based on the deformation tensor can be found in \cite{Li2,Li1,Lei2008,Lei-Zhou2005,2010On,2010On1,Zhang-Fang2012}.

\subsection{The Hooke models}
 Let $\nu,\mu>0$. The construction of global weak solutions for micro-macro systems was considered in \cite{Hookeweak1,Hookeweak2,Hookeweak3,Hookeweak4,Hookeweak5,Hookeweak6}. Recently, global existence and uniqueness of a large class of initial data for the diffusive 2-D models was proved in \cite{Diff2DF-P}. It's worthy mentioning that the so-called moments $(u,M_{a,b})$ considered in \cite{Diff2DF-P} are strong solutions with macroscopic variables $(t,x)$ while $\psi$ is nonnegative measures on $\mathbb{R}^2_q\times\mathbb{R}^2_x$ merely.

 Let $\nu>0,~\mu=0$. The local existence of micro-macro systems were proved by many researchers in different settings, see \cite{2004Well,Renardy1989An}. F. Lin, C. Liu and P. Zhang \cite{Zhang2007} studied the incompressible micro-macro polymeric system and proved global existence near equilibrium with some assumptions on the potential $\mathcal{U}$. Global regularity for the 2-D co-rotation Hooke dumbbell model was proved by N. Masmoudi, P. Zhang, and Z. Zhang \cite{HookeGlobal}. Long time behavior for the 3-D micro-macro polymeric system was considered by L. He and P. Zhang \cite{He2009}.

\subsection{Main results}
Our main results can be stated as follows.
\begin{theo}[Global well-posedness in Sobolev space]\label{th1}
Let $d=2~and~s>2$. Assume that $a>0$, $\mu>0$ and $\kappa=\min\{a,\mu\}$. Let $(u,\tau)$ be a strong solution of \eqref{eq2} with the initial data $(u_0,\tau_0)\in H^s$. Then there exists some sufficiently small constant $c$, which is not dependent on $a$, $\mu$ and the initial data, such that if
\begin{align}\label{condition1}
\|\nabla u_0\|_{L^2} \leq c\kappa,~~\|\tau_0\|_{H^1} \leq c(a\mu)^{\frac 1 2}\kappa,
\end{align}
and
\begin{align}\label{condition2}
\|\Gamma_0\|_{L^\infty}\leq ca\mu,~~~~\|\tau_0\|_{H^1}\leq \frac {c^2\lambda}{\ln(e+\|(u_0,\tau_0)\|^2_{H^s})},
\end{align}
where $\lambda=\min\{a^{\frac 1 2}\mu^{\frac 3 2},a^2\mu^2,a^2\mu,a,a^{\frac 3 2}\mu^{\frac 5 2},a^{\frac 1 2}\mu^{\frac 1 2}\}$, then the system \eqref{eq2} admits a unique global strong solution $(u,\tau)\in C([0,\infty); H^s)$.
\end{theo}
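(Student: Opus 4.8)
The plan is to run a bootstrap/continuation argument in the spirit of \cite{Wei2020}, combining three ingredients: a basic dissipative energy estimate in $H^1$, a Beale--Kato--Majda type criterion reducing global existence in $H^s$ to the control of $\int_0^T\|\nabla u\|_{L^\infty}\,dt$, and a pointwise bound on the vorticity obtained through the structural quantity $\Gamma=\Omega-R\tau$. First I would record the low-order energy estimate: testing the $u$-equation with $u$ and the $\tau$-equation with $\tau$ and using $\alpha>0$ together with $a,\mu>0$ yields, after the usual cancellation of $div~\tau$ against $D(u):\tau$ (here $Q(\Omega,\tau)$ contributes nothing to the $L^2$ energy of $\tau$ by antisymmetry of $\Omega$ in the co-rotation case), the differential inequality
\begin{align}\label{eq:energy}
\frac{d}{dt}\bigl(\|u\|_{L^2}^2+\|\tau\|_{L^2}^2\bigr)+2\mu\|\nabla\tau\|_{L^2}^2+2a\|\tau\|_{L^2}^2\le 0,
\end{align}
and similarly for $\nabla u,\nabla\tau$ one gets a closed estimate provided $\|\nabla u\|_{L^\infty}$ is integrable; crucially, since $\sigma(u)=\Omega$ there is no $D(u)$ source in the $\tau$-equation, so $\|\tau(t)\|_{H^1}$ decays exponentially like $e^{-\kappa t}\|\tau_0\|_{H^1}$ once the smallness \eqref{condition1} keeps the transport and $Q$ terms subdominant. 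This exponential decay of $\tau$ in $H^1$ is the substitute for the missing dissipation in the $\Gamma$-equation.

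The heart of the argument is the $L^\infty$ bound on $\Omega$. Taking $curl$ of the momentum equation and using $R=-(\Delta)^{-1}curl~div$, the quantity $\Gamma=\Omega-R\tau$ satisfies a transport equation
\begin{align}\label{eq:Gamma}
\partial_t\Gamma+u\cdot\nabla\Gamma=[\,\text{commutator and lower-order terms involving }R,\tau,\nabla u\,],
\end{align}
with \emph{no} dissipation on the right-hand side (contrast with \cite{2015Elgindi}, where $D(u)$ produces a damping term). I would estimate the right-hand side in $L^\infty$ by $C(\|\nabla u\|_{L^\infty}+1)\|\tau\|_{H^1\cap L^\infty}$-type quantities (using boundedness of $R$-type operators on appropriate spaces, together with Sobolev/Besov embeddings to handle the borderline $L^\infty$ loss), and then exploit the exponential decay $\|\tau(t)\|_{H^1}\lesssim e^{-\kappa t}\|\tau_0\|_{H^1}$ so that the time integral of the forcing is bounded by $\kappa^{-1}\|\tau_0\|_{H^1}$ up to constants — this is where the quantitative smallness of $\|\tau_0\|_{H^1}$ relative to $\lambda$ and $a\mu$ in \eqref{condition2} enters. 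Hence $\|\Gamma(t)\|_{L^\infty}\le\|\Gamma_0\|_{L^\infty}+C\kappa^{-1}\|\tau_0\|_{H^1}(\cdots)$ stays small for all time, and since $\Omega=\Gamma+R\tau$ with $\|R\tau\|_{L^\infty}$ controlled by $\|\tau\|_{H^1\cap L^\infty}$ (itself small and decaying), we obtain a uniform-in-time bound $\|\Omega(t)\|_{L^\infty}\lesssim a\mu$.

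With $\|\Omega\|_{L^\infty}$ under control one recovers $\|\nabla u\|_{L^\infty}$ up to a logarithmic loss: the standard elliptic/Biot--Savart estimate gives $\|\nabla u\|_{L^\infty}\lesssim\|\Omega\|_{L^\infty}\bigl(1+\ln(e+\|u\|_{H^s})\bigr)+\|\Omega\|_{L^2}$, so the BKM quantity satisfies $\int_0^T\|\nabla u\|_{L^\infty}\,dt\lesssim T(1+\ln\sup_{[0,T]}\|u\|_{H^s})$; feeding this into a Gr\"onwall inequality for $\|(u,\tau)\|_{H^s}$ yields at worst a double-exponential bound $\|(u,\tau)(t)\|_{H^s}\le C(t)$ on any finite interval, which by the local theory and the continuation criterion upgrades to a global solution in $C([0,\infty);H^s)$. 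The bootstrap is closed by choosing $c$ small enough that the smallness hypotheses \eqref{condition1}--\eqref{condition2} — in particular the condition $\|\tau_0\|_{H^1}^2\le c^2 a\kappa(\mu+1)\mu/\ln(C+\|(u_0,\tau_0)\|_{H^s}^2)$, which is tailored precisely to absorb the logarithmic loss in the Biot--Savart estimate over the relevant time scale — propagate: on the maximal interval where $\|\Gamma\|_{L^\infty}\le 2\|\Gamma_0\|_{L^\infty}$ and $\|\tau\|_{H^1}\le 2\|\tau_0\|_{H^1}e^{-\kappa t/2}$, the estimates above show the constants improve, so the interval is the whole of $[0,\infty)$.

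\emph{Main obstacle.} The delicate point is the $L^\infty$ estimate of the forcing in \eqref{eq:Gamma}: because there is no damping in the $\Gamma$-equation, any term that fails to be time-integrable would destroy the argument, and the naive $L^\infty$ bound on $R\tau$ and on the commutators loses a logarithm in $\|u\|_{H^s}$. The resolution is to pair the exponential-in-time decay of $\|\tau\|_{H^1}$ (a gift of the co-rotation structure, since no $D(u)$ regenerates $\tau$) against that logarithmic loss, which forces the precise, somewhat baroque, smallness conditions in \eqref{condition2}; getting the bookkeeping of the constants $a,\mu,\kappa,\lambda$ consistent throughout the bootstrap is the real work.
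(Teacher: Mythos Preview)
Your approach is essentially the paper's: exponential $H^1$ decay of $\tau$ (possible precisely because there is no $D(u)$ source in the co-rotation case), a BKM criterion reducing to $\|\Omega\|_{L^\infty}$, and the structural variable $\Gamma$ to kill the bad linear term, closed by a bootstrap. Two small corrections are worth flagging. First, the cancellation requires $\Gamma=\mu\Omega-R\tau$, not $\Omega-R\tau$; the factor $\mu$ is what makes $\mu\,curl\,div\,\tau$ match $R(\mu\Delta\tau)$. Second, the order of your steps hides the genuine circularity: the forcing in the $\Gamma$-equation already contains the logarithm, since $\|RQ(\Omega,\tau)\|_{L^\infty}\lesssim \|\Omega\|_{B^0_{\infty,1}}\|\tau\|_{H^2}$ and $\|\Omega\|_{B^0_{\infty,1}}\lesssim \|\Omega\|_{L^\infty}\ln(C+\|u\|_{H^s})$. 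So you cannot first bound $\int\|F\|_{L^\infty}\,ds$ and then deduce the $H^s$ growth; the paper does it the other way around. Under the bootstrap hypothesis $\|\Gamma\|_{L^\infty}\le 4ca\mu$ one gets $\int_0^t\|\nabla u\|_{B^0_{\infty,\infty}}\,ds\le \tfrac{a}{8}t+C$ (linear in $t$, not bounded), hence $\ln(C+\|(u,\tau)\|_{H^s}^2)\le A_0 e^{a t/4}$, and only then does one plug this \emph{exponentially growing} logarithm back into the $\Gamma$ forcing; the decay $\int_0^t e^{as}\|\tau\|_{H^2}^2\,ds\lesssim\|\tau_0\|_{H^1}^2$ beats $e^{at/4}$ and closes the loop. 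Your ``main obstacle'' paragraph shows you see this mechanism, but your step-by-step narrative has it backwards.
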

\begin{rema}
Let $\phi_0(x)=A(x_2e^{-|x|^2},-x_1e^{-|x|^2})^{T}$ and $\varphi_0(x)=Ae^{-|x|^2}Id$. Consider $u_0=\varepsilon\phi_0(\varepsilon x)$ and $\tau_0=\varepsilon^2\varphi_0(\varepsilon x)$, then we can verify that ${\rm div}~u_0=0$. We infer that $\|u_0\|_{L^2}=\|\phi_0\|_{L^2}$ and $\|\tau_0\|_{L^2}=\varepsilon\|\varphi_0\|_{L^2}$. Moreover, we deduce that $\|u_0\|_{\dot{H}^s}=\varepsilon^s\|\phi_0\|_{\dot{H}^s}$ and $\|\tau_0\|_{\dot{H}^s}=\varepsilon^{s+1}\|\varphi_0\|_{\dot{H}^s}$ for any $\varepsilon>0$. Finally, we can construct large initial data in $H^s$ which satisfies \eqref{condition1} and \eqref{condition2} by taking $A$ sufficiently large and $\varepsilon$ small enough.
\end{rema}
\begin{rema}
For any $a$ and $\mu$, the system \eqref{eq2} reduces to the well-known Euler equation by taking $\tau=0$. In this case, the parameters $a$ and $\mu$ in Theorem \ref{th1} can be regarded as infinity, which means that our results cover the global existence for the 2-D Euler equation in Sobolev spaces $H^s$.
\end{rema}
\begin{rema}
Notice that equations \eqref{eq2} contain more solutions than equations \eqref{eq1}. In the Corollary \ref{th1'}, we establish the connection between the solutions $(u,\tau)$ of \eqref{eq2} constructed in Theorem \ref{th1} and the solutions $(u,\psi)$ of \eqref{eq1}.
\end{rema}
\begin{theo}[Global well-posedness in critical Besov space]\label{th13}
	Let $d=2$. Assume that $a>0$ and $\mu>0$. Let $(u,\tau)$ be a strong solution of \eqref{eq2} with the initial data $(u_0,\tau_0)\in (H^1\cap B^1_{\infty,1})\times (H^1\cap B^0_{\infty,1})$. Let $H_0=\|(u_0,\tau_0)\|^2_{H^1} e^{\frac{C}{a\mu}+\frac{C}{a\mu}\|\tau_0\|^2_{L^{\infty}}+C\mu^{-2}\|\tau_0\|^2_{L^2}}$. Then there exists some sufficiently small constant $c$ , which is not dependent on $a$, $\mu$ and the initial data, such that if
	\begin{align}\label{highsmallness}
		\|(\mu\nabla u_0,\tau_0)\|_{B^0_{\infty,1}} \leq \frac{c\min\{a\mu,a^2\mu\}}{1+H^{\frac 12}_0},
	\end{align}
	and
	\begin{align}\label{nonlinearsmallness}
	\|\tau_0\|_{B^0_{\infty,1}\cap L^4}
	\leq \frac {c^2\beta}{1+H^{\frac 32}_0},
	\end{align}
	where $\beta=\min\{a^2\mu^2,a\mu^3,a^3\mu,a^3\mu^2,a^{\frac 52}\mu^{\frac32},a\mu,a^{\frac 1 2}\mu^{\frac 3 2},a^2\}$, then the system \eqref{eq2} admits a global strong solution $(u,\tau)\in C\big([0,\infty);H^1\cap B^1_{\infty,1}\big)\times C\big([0,\infty);H^1\cap B^0_{\infty,1}\big)$.
\end{theo}
\begin{rema}
For any $a$ and $\mu$, the system \eqref{eq2} reduces to the well-known Euler equation by taking $\tau=0$. In this case, the parameters $a$ and $\mu$ in Theorem \ref{th13} can be regarded as infinity, which means that our results cover the global existence for the 2-D Euler equation in critical Besov space $B^1_{\infty,1}$ \cite{Bahouri2011}.
\end{rema}
\begin{theo}[Large time behaviour]\label{th2}
Let $(u,\tau)$ be a strong solution of \eqref{eq0} with $\nu=0$ the initial data $(u_0,\tau_0)$ under the condition in Theorem \ref{th3}. In addition, if $(u_0,\tau_0)\in L^1$, then there exists $C>0$ such that for every $t>0$, there holds
\begin{align}\label{decay}
\|(u,\tau)\|_{H^1} \leq C(1+t)^{-\frac{1}{2}}.
\end{align}
\end{theo}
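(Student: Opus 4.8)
The plan is to combine the structural quantity $\Gamma=\Omega-R\tau$ of \cite{2015Elgindi} (with $R=-(\Delta)^{-1}\mathrm{curl}\,\mathrm{div}$) with the Fourier splitting method of \cite{Schonbek1985,Luo-Yin}, in three stages: a dissipative energy inequality, then a slow logarithmic $L^2$ decay, then a bootstrap to the algebraic rate. Throughout, Theorem \ref{th3} provides a global solution of \eqref{eq0} whose high Sobolev norm is uniformly bounded and small, which justifies every manipulation below and makes the quadratic and cubic terms absorbable by the dissipation.

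First I would derive a dissipative energy inequality. Taking $\mathrm{curl}$ of the momentum equation and applying $R$ to the constitutive law shows that $\Gamma$ solves a damped transport equation $\partial_t\Gamma+u\cdot\nabla\Gamma+\tfrac{\alpha}{2}\Gamma=\mathcal F$, whose forcing $\mathcal F$ involves $\tau$ and its derivatives (via order-zero operators) together with transport and quadratic terms, but no linear top-order $\nabla u$. Combining $L^2$ and $\dot H^1$ energy estimates for $(u,\tau)$ --- using $\mathrm{div}\,u=0$, the cancellation of the rotational part of $Q$, $\int Q(\nabla u,\tau){:}\tau\,dx=b\int\big(D(u)\tau+\tau D(u)\big){:}\tau\,dx$, and 2-D Ladyzhenskaya/Gagliardo--Nirenberg inequalities to close the nonlinear terms under the smallness of Theorem \ref{th3} --- with the matching estimates for $\Gamma$ and the elliptic bound $\|\nabla u\|_{H^1}=\|\omega\|_{H^1}\lesssim\|\Gamma\|_{H^1}+\|\tau\|_{H^1}$, one obtains a modified energy $\mathcal E(t)\simeq\|(u,\tau)\|_{H^1}^2$ with
\[
\frac{d}{dt}\mathcal E(t)+c\,\mathcal D(t)\le0,\qquad \mathcal D(t)\gtrsim\|\nabla u\|_{H^1}^2+\|\tau\|_{H^1}^2+\mu\|\nabla\tau\|_{H^1}^2 .
\]
The decisive structural feature (``transfer of dissipation from $\tau$ to $u$'') is that $\mathcal D$ controls every piece of $\mathcal E$ except $\|u\|_{L^2}^2$; this is what makes the scheme run, and also why $\|u\|_{L^2}$ must be handled by hand.

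Next comes Fourier splitting on the $L^2$-level pair $\mathcal E_1(t)\simeq\|u\|_{L^2}^2+\|\tau\|_{L^2}^2+\|\Gamma\|_{L^2}^2$, $\mathcal D_1(t)\gtrsim\|\nabla u\|_{L^2}^2+\|\tau\|_{H^1}^2$. Splitting at $|\xi|=g(t)$ and using $\widehat\omega=i\xi^{\perp}\!\cdot\widehat u$ and that $R$ has a bounded symbol yields $\mathcal D_1\gtrsim g(t)^2\big(\mathcal E_1-\int_{|\xi|\le g(t)}(|\widehat u|^2+|\widehat\tau|^2+|\widehat\Gamma|^2)\,d\xi\big)$, hence $\frac{d}{dt}\mathcal E_1+c\,g^2\mathcal E_1\lesssim g^2\!\int_{|\xi|\le g}(\cdots)\,d\xi$. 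Bounding the low-frequency integral by the (at worst slowly growing) estimates on $\widehat u,\widehat\tau$ near the origin coming from the Duhamel formulas and choosing $g(t)^2\sim 1/\big((e+t)\ln(e+t)\big)$, a Gr\"onwall argument first gives $\|(u,\tau)\|_{L^2}\lesssim\ln^{-1}(e+t)$; feeding this decay back in and iterating yields, for every $l\in\mathbb N^{+}$, $\|(u,\tau)\|_{L^2}\le C_l\ln^{-l}(e+t)$.

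Finally I would upgrade to the algebraic rate. Since $(u_0,\tau_0)\in L^1$, the logarithmic decay makes the time integrals in the Duhamel representations converge, so a coupled bootstrap on $\|(u,\tau)\|_{L^2}$ and on $\sup_\xi\big(|\widehat u(t,\xi)|+|\widehat\tau(t,\xi)|\big)$ closes and yields a uniform low-frequency bound; the right-hand side of the Fourier-splitting inequality is then $\lesssim g(t)^4$, and taking $g(t)^2=\kappa/(1+t)$ with $\kappa$ large produces $\mathcal E_1(t)\lesssim(1+t)^{-1}$, i.e. $\|(u,\tau)\|_{L^2}\lesssim(1+t)^{-\frac12}$. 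Returning to the first inequality and using that $\mathcal D$ dominates $\mathcal E-\|u\|_{L^2}^2$, we get $\frac{d}{dt}\mathcal E+c\,\mathcal E\lesssim\|u\|_{L^2}^2\lesssim(1+t)^{-1}$ --- equivalently, a time-weighted energy estimate --- whence $\mathcal E(t)\lesssim(1+t)^{-1}$ and $\|(u,\tau)\|_{H^1}\lesssim(1+t)^{-\frac12}$, which is \eqref{decay}. The main obstacle is precisely this last stage: the momentum equation has no dissipation, so $\mathcal D$ never sees $\|u\|_{L^2}$, and since $u$ obeys an Euler-type equation there is no $L^1$ conservation bounding its low frequencies directly; that control must be extracted from the decay itself, which is self-sustaining only after the logarithmic decay is in hand --- this is what forces the two-tier argument.
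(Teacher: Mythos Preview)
Your three-stage architecture --- dissipative $H^1$ inequality via $\Gamma$, logarithmic decay by Fourier splitting, then upgrade to the algebraic rate --- matches the paper, and your last move (recovering the $H^1$ rate from the $L^2$ rate via $\tfrac{d}{dt}\mathcal E+c\mathcal E\lesssim\|u\|_{L^2}^2$) is a legitimate alternative to the paper's direct time-weighted $H^1$ splitting.

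The gap is in your low-frequency control. You propose to bound $\sup_\xi(|\widehat u|+|\widehat\tau|)$ by separate Duhamel formulas and assert that ``the logarithmic decay makes the time integrals in the Duhamel representations converge.'' It does not: with only $\|u\|_{L^2}^2\lesssim\ln^{-l}(e+t)$ the integral $\int_0^t\|u\|_{L^2}^2\,ds$ still diverges. More seriously, the Duhamel formula for $\widehat u$ (no semigroup) contains the \emph{linear} term $i\xi^{T}\widehat\tau$, and the one for $\widehat\tau$ contains $\alpha\widehat{D(u)}\sim|\xi|\widehat u$; feeding these into each other at the splitting radius $|\xi|^2=\kappa/(1+t)$ produces a feedback of size $\sim\kappa\,\sup_s|\widehat u|$, and since Fourier splitting needs $\kappa$ large, your coupled bootstrap does not close.

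The paper sidesteps this by working with $|\widehat u|^2+|\widehat\tau|^2$ and using the symmetric-tensor cancellation
\[
\mathcal Re\bigl[i\xi^{T}\widehat\tau\,\bar{\widehat u}+\tfrac{i}{2}(\xi\otimes\widehat u+\widehat u\otimes\xi)\!:\!\bar{\widehat\tau}\bigr]=0,
\]
which eliminates the linear coupling entirely from $\tfrac{d}{dt}(|\widehat u|^2+|\widehat\tau|^2)$. The resulting pointwise inequality has only \emph{nonlinear} forcing, and integrating over $\{|\xi|^2\le C_2(1+t)^{-1}\}$ gives $\int_{S(t)}|\widehat u|^2\,d\xi\lesssim(1+t)^{-1}\bigl(1+\int_0^t\|u\|_{L^2}^3\,ds\bigr)$. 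This integral is still divergent, but being cubic one may write $\|u\|_{L^2}^3\le M(s)(1+s)^{-1}\|u\|_{L^2}$ with $M(s)=\sup_{r\le s}(1+r)\|(u,\tau)\|_{H^1}^2$, insert the already-proved logarithmic bound $\|u\|_{L^2}\lesssim\ln^{-2}(e+s)$, and close by Gronwall on $M$. That cancellation is the missing ingredient in your outline.
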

\begin{rema}
Notice that Theorem \ref{th3} don't provide any information for the global solution of \eqref{eq0} in $L^\infty([0,\infty);H^s)$ with some large initial data. However, by virtue of the Fourier splitting method and the time weighted energy estimate, we can prove the large time behaviour by taking full advantage of the $H^1$ energy estimation \eqref{estimate} and the low-frequency assumption $(u_0,\tau_0)\in L^1$. The proof does not involve the higher derivative, which is useful in studying large time behaviour of global solutions with some large initial data.
\end{rema}
\begin{rema}
The conclusions in Theorem \ref{th2} and Theorems \ref{th1}, \ref{th13} reveal the essential difference between \eqref{eq0} and \eqref{eq2}. More precisely, the solutions $(u,\tau)$ of \eqref{eq0} with $\nu=0$ decay in $H^1$, while the solutions $u$ of \eqref{eq2} are bounded in $H^1$. Moreover, $u$ conserve in $L^2$ whenever $\tau=0$. Such observation reflects the obstacle of global approximation in $H^s$ between \eqref{eq0} and \eqref{eq2}.
\end{rema}

\subsection{Motivation and main ideas}
Global well-posedness with $d=2$ and long time behavior for polymeric models were noticed by F. Lin \cite{Lin2020} and N. Masmoudi \cite{Masmoudi2013,2016Equations}. To our best knowledge, global well-posedness for the co-rotation inviscid Oldroyd-B model \eqref{eq2} and large time behaviour of global strong solutions with large data for the noncorotation inviscid Oldroyd-B model \eqref{eq0} have not been studied yet. To get global existence for \eqref{eq2} with $d=2$, the key point is to obtain the uniform estimate of $\|\Omega\|_{L^{\infty}}$. However, due to the linear term
$\nabla \times {\rm div}~\tau$ breaking the conservation laws, it is difficult to get global estimate of $\|\Omega\|_{L^{\infty}}$ from the following equation
\begin{align*}
	\frac{d}{dt}\Omega + u\cdot\nabla\Omega = \nabla \times {\rm div}~\tau.
\end{align*}

In this paper, we firstly study about global solutions for \eqref{eq2} with large data in $H^s$. The proof is based on the bootstrap argument in \cite{Wei2020}. To prove global existence, we derive the energy estimate and B-K-M criterion for \eqref{eq2} in $H^s$. The main difficult in the proof is to prove the boundedness of vorticity from \eqref{eq2}. Motivated by \cite{2015Elgindi}, we can cancel $\nabla \times{\rm div}~\tau$ and $\Delta\tau$ by virtue of the structural trick
$$\Gamma=\mu\Omega-R\tau,$$
where $R=-(\Delta)^{-1}{\rm curl}~{\rm div}$. Then, we deduce from \eqref{eq2} that
\begin{align*}
\frac{d}{dt}\Gamma + u\cdot\nabla\Gamma = aR\tau + RQ(\Omega,\tau) + [R,u\cdot\nabla]\tau.
\end{align*}
Different from \cite{2015Elgindi}, there is no damping phenomenon for $\Gamma$ or $\Omega$ for lack of $D(u)$. We thus fail to use the bootstrap argument as in \cite{2015Elgindi}. Fortunately, the disappearance of $D(u)$ leads to exponential dissipation for $\tau$ in $H^1$. The effect of exponential dissipation of $\tau$ is essential in the estimation of $\Gamma$. We finish the proof of global existence with large data in $H^s$ by deriving the $L^{\infty}$ estimate for $\Omega$.

To our best knowledge, there is still no result related to global existence of the inviscid Hooke models \eqref{eq1} with $\nu=0$. As a corollary of Theorem \ref{th1}, we prove the global existence of \eqref{eq1} with large data in $H^s$. It's worth mentioning that the estimate of $\langle q\rangle^{n}\nabla^m_qg$ in $L^{\infty}(\mathcal{L}^2)$ is the key to the proof of global existence.

Furthermore, we establish local existence for \eqref{eq2} in $B^1_{\infty,1}\times B^0_{\infty,1}$ and present the global existence with large data in
$$(H^1\cap B^1_{\infty,1}) \times (H^1\cap B^0_{\infty,1}).$$
The proof of global existence is based on the refined estimate in Besov space with index 0 and the $H^1\times (H^1\cap L^\infty)$ boundedness for $(u,\tau)$, where we use the fact that
$$\langle u\cdot\nabla u,\Delta u\rangle = 0,$$
for $d=2$ and ${\rm div}~u=0$.
By virtue of the refined estimate in Besov spaces with index $0$, the authors \cite{Bahouri2011} prove the global well-posedness for the Euler equation in the borderline case and obtain the exponential growth estimate of vorticity in $B^0_{\infty,1}$. Considering global existence for \eqref{eq2} in critical Besov space, the main difficult for us is to estimate external force in the equation of $\Gamma$. We find that exponential
dissipation for $\tau$ is crucial, which can prevent the exponential growth of external force in the equation of $\Gamma$. Thus we obtain the exponential growth estimate of $\Gamma$ in $B^0_{\infty,1}$, which implies the global existence for \eqref{eq2}.

Finally, we study about large time behaviour for \eqref{eq0} with $\nu=0$ and $d=2$. Since the structural trick $\Gamma$ transfer dissipation from $\tau$ to $u$ for \eqref{eq0}, we obtain the dissipation energy estimate for $(u,\tau)$ which is useful to prove large time behaviour with large data. By the Fourier splitting method, for any $l\in N^{+}$ and $t>0$, we get initial time decay rate
\begin{align*}
\|(u,\tau)\|_{H^1} \leq C\ln^{-l}(e+t),
\end{align*} see \cite{Luo-Yin,Schonbek1985}. By virtue of the time weighted energy estimate and the logarithmic decay rate, then we improve the time decay rate to $\|(u,\tau)\|_{H^1} \leq C(1+t)^{-\frac{1}{2}}.$

The paper is organized as follows. In Section 2 we introduce some notations and give some preliminaries which will be used in the sequel. In Section 3 we prove that the 2-D co-rotation inviscid Oldroyd-B model admits a unique global strong solution in Sobolev space. As a corollary, we prove global existence of the inviscid Hooke model near equilibrium. In Section 4 we prove that the 2-D co-rotation inviscid Oldroyd-B model admits a global strong solution in critical Besov space.
In Section 5 we study the $H^1$ decay rate of global strong solutions to the noncorotation inviscid Oldroyd-B model for $d=2$ by virtue of the Fourier splitting method.

\section{Preliminaries}
In this section we introduce some notations and useful lemmas which will be used in the sequel.

Let $\psi_{\infty}(q)=\frac{e^{-\mathcal{U}(q)}}{\int_{\mathbb{R}^{d}}e^{-\mathcal{U}(q)}dq}$.
For $p\geq1$, we denote by $\mathcal{L}^{p}$ the space
$$\mathcal{L}^{p}=\big\{\psi \big|\|\psi\|^{p}_{\mathcal{L}^{p}}=\int_{\mathbb{R}^{d}} \psi_{\infty}|\psi|^{p}dq<\infty\big\}.$$

We will use the notation $L^{p}_{x}(\mathcal{L}^{p'})$ to denote $L^{p}(\mathbb{R}^{d};\mathcal{L}^{p'}):$
$$L^{p}_{x}(\mathcal{L}^{p'})=\big\{\psi \big|\|\psi\|_{L^{p}_{x}(\mathcal{L}^{p'})}=(\int_{\mathbb{R}^{d}}(\int_{\mathbb{R}^{d}} \psi_{\infty}|\psi|^{p'}dq)^{\frac{p}{p'}}dx)^{\frac{1}{p}}<\infty\big\}.$$

We now recall the Littlewood-Paley decomposition theory and Besov spaces.
\begin{prop}\cite{Bahouri2011}\label{prop0}
Let $\mathcal{C}$ be the annulus $\{\xi\in\mathbb{R}^d:\frac 3 4\leq|\xi|\leq\frac 8 3\}$. There exist radial functions $\chi$ and $\varphi$, valued in the interval $[0,1]$, belonging respectively to $\mathscr{D}(B(0,\frac 4 3))$ and $\mathscr{D}(\mathcal{C})$, and such that
$$ \forall\xi\in\mathbb{R}^d,\ \chi(\xi)+\sum_{j\geq 0}\varphi(2^{-j}\xi)=1, $$
$$ \forall\xi\in\mathbb{R}^d\backslash\{0\},\ \sum_{j\in\mathbb{Z}}\varphi(2^{-j}\xi)=1, $$
$$ |j-j'|\geq 2\Rightarrow\mathrm{Supp}\ \varphi(2^{-j}\cdot)\cap \mathrm{Supp}\ \varphi(2^{-j'}\cdot)=\emptyset, $$
$$ j\geq 1\Rightarrow\mathrm{Supp}\ \chi(\cdot)\cap \mathrm{Supp}\ \varphi(2^{-j}\cdot)=\emptyset. $$
The set $\widetilde{\mathcal{C}}=B(0,\frac 2 3)+\mathcal{C}$ is an annulus, and we have
$$ |j-j'|\geq 5\Rightarrow 2^{j}\mathcal{C}\cap 2^{j'}\widetilde{\mathcal{C}}=\emptyset. $$
Further, we have
$$ \forall\xi\in\mathbb{R}^d,\ \frac 1 2\leq\chi^2(\xi)+\sum_{j\geq 0}\varphi^2(2^{-j}\xi)\leq 1, $$
$$ \forall\xi\in\mathbb{R}^d\backslash\{0\},\ \frac 1 2\leq\sum_{j\in\mathbb{Z}}\varphi^2(2^{-j}\xi)\leq 1. $$
\end{prop}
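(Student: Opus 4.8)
Proposition \ref{prop0} is the classical dyadic partition of unity underlying Littlewood--Paley theory, taken verbatim from \cite{Bahouri2011}, so the plan is simply to reproduce the standard explicit construction and then verify each assertion by elementary comparisons of radii. First I would fix a radial function $\chi\in\mathcal{D}(B(0,\tfrac43))$, valued in $[0,1]$, equal to $1$ on $\overline{B(0,\tfrac34)}$ and nonincreasing along rays issuing from the origin --- for instance $\chi(\xi)=g(|\xi|)$ with $g\in C_c^\infty([0,\infty))$ nonincreasing, $g\equiv1$ on $[0,\tfrac34]$ and $\mathrm{Supp}\,g\subset[0,\tfrac43)$ --- and then define $\varphi(\xi):=\chi(\tfrac\xi2)-\chi(\xi)$. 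One checks that $\varphi$ is radial and valued in $[0,1]$: it is nonnegative because $\chi$ is nonincreasing along rays, and $\varphi\le\chi(\tfrac\xi2)\le1$; moreover $\mathrm{Supp}\,\varphi\subset\mathcal{C}=\{\tfrac34\le|\xi|\le\tfrac83\}$, since $\varphi=0$ for $|\xi|\le\tfrac34$ (there $\chi(\tfrac\xi2)=\chi(\xi)=1$) and for $|\xi|\ge\tfrac83$ (there $\chi(\xi)=0$ and $|\tfrac\xi2|\ge\tfrac43$, so $\chi(\tfrac\xi2)=0$).

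The two partition identities then follow by telescoping. For fixed $\xi$ one has $\chi(\xi)+\sum_{j=0}^N\varphi(2^{-j}\xi)=\chi(2^{-N-1}\xi)$, which tends to $1$ as $N\to\infty$ because $2^{-N-1}\xi$ eventually lies in $B(0,\tfrac34)$; for $\xi\ne0$, $\sum_{j=-M}^N\varphi(2^{-j}\xi)=\chi(2^{-N-1}\xi)-\chi(2^M\xi)\to1-0$ as $M,N\to\infty$. For the support-separation statements I would use that $\mathrm{Supp}\,\varphi(2^{-j}\cdot)\subset\{\tfrac34 2^j\le|\xi|\le\tfrac83 2^j\}$: since $\tfrac34\cdot4=3>\tfrac83$, the dilated supports with $|j-j'|\ge2$ are disjoint, and since $\tfrac34\cdot2=\tfrac32>\tfrac43$, $\mathrm{Supp}\,\chi\subset\overline{B(0,\tfrac43)}$ can only meet $\mathrm{Supp}\,\varphi(2^{-j}\cdot)$ when $j=0$. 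For $\widetilde{\mathcal{C}}=B(0,\tfrac23)+\mathcal{C}$, the triangle inequality gives $\tfrac1{12}<|\xi|<\tfrac{10}{3}$ on $\widetilde{\mathcal{C}}$ (indeed with equality), so $\widetilde{\mathcal{C}}$ is an annulus bounded away from $0$, and then $2^j\mathcal{C}\cap2^{j'}\widetilde{\mathcal{C}}=\emptyset$ for $|j-j'|\ge5$ follows by comparing the inner radius $\tfrac1{12}\cdot2^5=\tfrac83$ of $2^{j'}\widetilde{\mathcal{C}}$ with the outer radius of $2^j\mathcal{C}$ when $j'\ge j+5$, and $\tfrac34\cdot2^5=24>\tfrac{10}{3}$ when $j\ge j'+5$.

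Finally, for the two square-sum estimates I would argue that at every point $\xi$ at most two of the functions $\chi,\varphi(2^{-j}\cdot)$, $j\ge0$, are nonzero: the radius ratio $\tfrac{8/3}{3/4}=\tfrac{32}{9}<4$ forces at most two consecutive dilates of $\varphi$ to overlap at a point, $\chi$ can overlap only the $j=0$ dilate, and all three cannot be simultaneously nonzero since $\tfrac32>\tfrac43$. When exactly two are nonzero they are a consecutive pair whose values sum to $1$ by the first identity, so writing that sum as $a+b=1$ with $a,b\in[0,1]$ yields $\tfrac12=\tfrac12(a+b)^2\le a^2+b^2\le(a+b)^2=1$; when only one is nonzero it equals $1$. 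This gives exactly $\tfrac12\le\chi^2(\xi)+\sum_{j\ge0}\varphi^2(2^{-j}\xi)\le1$, and the homogeneous bound is obtained identically from $\sum_{j\in\mathbb{Z}}\varphi(2^{-j}\xi)=1$ for $\xi\ne0$.

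There is no substantial obstacle here; the one place requiring genuine care is the numerology of the radii --- verifying that $\tfrac83<3$, $\tfrac32>\tfrac43$, $\tfrac1{12}\cdot32=\tfrac83$ and $\tfrac34\cdot32>\tfrac{10}{3}$ fall on the correct side of each (in)equality, and in particular treating the open/closed boundary of $B(0,\tfrac23)+\mathcal{C}$ so that $\widetilde{\mathcal{C}}$ is genuinely separated from the origin --- after which all of the claimed inclusions, identities and inequalities are routine bookkeeping.
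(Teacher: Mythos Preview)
Your argument is correct. The paper itself does not prove Proposition~\ref{prop0}: it is quoted verbatim as background from \cite{Bahouri2011}, and the explicit construction $\varphi(\xi)=\chi(\xi/2)-\chi(\xi)$ with a radial nonincreasing $\chi$, together with the telescoping identities, the radius comparisons for support separation, and the two-term Cauchy--Schwarz bound for the square sums, is precisely the standard proof given in that reference.
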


$\mathscr{F}$ represents the Fourier transform and  its inverse is denoted by $\mathscr{F}^{-1}$.
Let $u$ be a tempered distribution in $\mathcal{S}'(\mathbb{R}^d)$. For all $j\in\mathbb{Z}$, define
$$
\Delta_j u=0\,\ \text{if}\,\ j\leq -2,\quad
\Delta_{-1} u=\mathscr{F}^{-1}(\chi\mathscr{F}u),\quad
\Delta_j u=\mathscr{F}^{-1}(\varphi(2^{-j}\cdot)\mathscr{F}u)\,\ \text{if}\,\ j\geq 0,\quad
S_j u=\sum_{j'<j}\Delta_{j'}u.
$$
Then the Littlewood-Paley decomposition is given as follows:
$$ u=\sum_{j\in\mathbb{Z}}\Delta_j u \quad \text{in}\ \mathcal{S}'(\mathbb{R}^d). $$
Let $s\in\mathbb{R},\ 1\leq p,r\leq\infty.$ The nonhomogeneous Besov spaces $B^s_{p,r}$ and $B^s_{p,r}(\mathcal{L}^{p'})$ are defined by
$$ B^s_{p,r}=\{u\in S':\|u\|_{B^s_{p,r}}=\Big\|(2^{js}\|\Delta_j u\|_{L^p})_j \Big\|_{l^r(\mathbb{Z})}<\infty\}, $$
and
$$ B^s_{p,r}(\mathcal{L}^{p'})=\{\phi\in S':\|\phi\|_{B^s_{p,r}(\mathcal{L}^{p'})}=\Big\|(2^{js}\|\Delta_j \phi\|_{L_{x}^{p}(\mathcal{L}^{p'})})_j \Big\|_{l^r(\mathbb{Z})}<\infty\}.$$
For any positive time $T$, the Time-Space Besov Spaces are defined by
\begin{align*}
L^{\rho}_T(B^s_{p,r}) = \{u\in S':\|u\|_{L^{\rho}_T(B^s_{p,r})} = \left\|\left\|2^{js}\|\Delta_ju\|_{L^p}\right\|_{l^r({\mathbb{Z}})}\right\|_{L^{\rho}_T}<\infty\},
\end{align*}
and
\begin{align*}
\tilde{L}^{\rho}_T(B^s_{p,r}) = \{u\in S':\|u\|_{\tilde{L}^{\rho}_T(B^s_{p,r})} = \left\|2^{js}\|\Delta_ju\|_{L^{\rho}_T(L^p)}\right\|_{l^r({\mathbb{Z}})}<\infty\}.
\end{align*}
Moreover, the following embedding relationships hold:
$$
L^{\rho}_T(B^s_{p,r})\hookrightarrow \tilde{L}^{\rho}_T(B^s_{p,r}) ~~~~\text{if}~~~~r\geq\rho~~~~\text{and}~~~~
\tilde{L}^{\rho}_T(B^s_{p,r})\hookrightarrow L^{\rho}_T(B^s_{p,r})  ~~~~\text{if}~~~~r\leq\rho.$$

We agree that $\nabla$ stands for $\nabla_x$ and ${\rm div}$ stands for ${\rm div}_x$.
Now we state some useful estimates in the study of transport-diffusion equations, which are
useful to the proofs of our main theorem later.
\begin{equation}\label{transport}
\left\{\begin{array}{l}
    f_t+v\cdot\nabla f-\nu\Delta f=g,\ x\in\mathbb{R}^d,\ t>0,\\
    f(0,x)=f_0(x).
\end{array}\right.
\end{equation}
\begin{lemm}\label{transportdiffusion}\cite{Bahouri2011}
Let $1\leq p_1\leq p\leq\infty, \ 1\leq r\leq\infty, \ s\geq-1-d\min(\frac{1}{p_1},\frac{1}{p'}).$ Let ${\rm div}~v=0$.
There exists a constant $C$ and $1\leq \rho_1\leq \rho \leq \infty$ such that for all solutions $f\in L^{\infty}([0,T];B^s_{p,r})$ of \eqref{transport} with initial data $f_0$ in $B^s_{p,r}$, and $g$ in $\tilde{L}^{\rho_1}([0,T);B^{s-2+\frac{2}{\rho_1}}_{p,r})$, we have
\begin{align}
\nu^{\frac{1}{\rho}}\|f(t)\|_{\tilde{L}^{\rho}_T(B^{s+\frac{2}{
			\rho}}_{p,r})}&\leq Ce^{C(1+\nu T)^{\frac{1}{
\rho}}V_{p_1}(T)}\Big((1+\nu T)^{\frac{1}{
\rho}}\|f_0\|_{B^s_{p,r}}\\ \notag
&~~~+(1+\nu T)^{1+\frac{1}{\rho}-\frac{1}{\rho_1}}\nu^{\frac{1}{\rho_1}-1} \|g\|_{\tilde{L}^{\rho_1}_T(B^{s-2+\frac{2}{\rho_1}}_{p,r})}\Big),
\end{align}
with
\begin{equation*}
  V'_{p_1}(t)=\left\{\begin{array}{ll}
  \|\nabla v\|_{B^{\frac{d}{p_1}}_{p_1,r}},\ &\text{if}\ s<1+\frac{d}{p_1}, \\
  \|\nabla v\|_{B^{s-1}_{p_1,r}},\ &\text{if}\ s>1+\frac d {p_1}\ \text{or}\ (s=1+\frac d {p_1}, \ r=1).   \\
  \end{array}\right.
\end{equation*}
\end{lemm}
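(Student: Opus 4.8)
\noindent\emph{Proof plan.} The plan is to run the standard Littlewood--Paley / paradifferential argument for parabolic transport equations; this is the content of the transport-diffusion estimate of \cite{Bahouri2011}, and I would reproduce it as follows. First apply the dyadic operator $\Delta_j$ to \eqref{transport}: writing $f_j:=\Delta_j f$ and $R_j:=[v\cdot\nabla,\Delta_j]f=v\cdot\nabla\Delta_j f-\Delta_j(v\cdot\nabla f)$, one gets
\begin{equation*}
\partial_t f_j+v\cdot\nabla f_j-\nu\Delta f_j=\Delta_j g+R_j .
\end{equation*}
Since $\widehat{f_j}$ is supported in the annulus $2^j\mathcal C$ of Proposition~\ref{prop0}, I would test this equation against $|f_j|^{p-2}f_j$, use $\mathrm{div}\,v=0$ to annihilate the transport term, and invoke the Bernstein-type coercivity $-\int(\Delta f_j)\,|f_j|^{p-2}f_j\,dx\ge c\,2^{2j}\|f_j\|_{L^p}^p$ for annulus-supported functions, obtaining $\frac{d}{dt}\|f_j\|_{L^p}+c\nu 2^{2j}\|f_j\|_{L^p}\le\|\Delta_j g\|_{L^p}+\|R_j\|_{L^p}$ (the endpoints $p\in\{1,\infty\}$ are recovered by the usual approximation argument, equivalently from the localized heat-flow bound $\|e^{\nu t\Delta}\Delta_j h\|_{L^p}\lm e^{-c\nu 2^{2j}t}\|\Delta_j h\|_{L^p}$ valid for all $p\in[1,\infty]$). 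Gronwall in $t$ then gives the Duhamel-type inequality
\begin{equation*}
\|f_j(t)\|_{L^p}\le e^{-c\nu 2^{2j}t}\|\Delta_j f_0\|_{L^p}+\int_0^t e^{-c\nu 2^{2j}(t-t')}\big(\|\Delta_j g(t')\|_{L^p}+\|R_j(t')\|_{L^p}\big)\,dt' .
\end{equation*}

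The second ingredient is the commutator estimate: under the stated range of $s,p,p_1$ one has $\|R_j(t)\|_{L^p}\le C\,c_j(t)\,2^{-js}\,V'_{p_1}(t)\,\|f(t)\|_{B^s_{p,r}}$ with $\|(c_j(t))_j\|_{\ell^r(\mathbb Z)}\le 1$. This is exactly where the lower restriction $s\ge-1-d\min(1/p_1,1/p')$ and the dichotomy in the definition of $V'_{p_1}$ originate (the two cases of the commutator bound force the two cases of $V'_{p_1}$, and at the borderline $s=1+1/p_1$ one must take $r=1$). I would then multiply the Duhamel inequality by $2^{j(s+2/\rho)}$, take the $L^\rho(0,T)$-norm in time and then the $\ell^r(\mathbb Z)$-norm in $j$ (passing to the Chemin--Lerner norm $\|\cdot\|_{\tilde L^\rho_TB^{s+2/\rho}_{p,r}}$), and apply Young's convolution inequality in time to the two integral terms, with $1+\tfrac1\rho=\tfrac1{\rho_1}+\tfrac1{\rho_2}$. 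The powers of $2^j$ cancel exactly: the weight $s+\tfrac2\rho$ on the left is balanced by the weight $s-2+\tfrac2{\rho_1}$ carried by $g$ together with the two parabolic gains $2^{-2j/\rho_2}$ and $2^{-2j/\rho}$ coming from $\|e^{-c\nu2^{2j}\cdot}\|_{L^{\rho_2}(0,T)}$ and $\|e^{-c\nu2^{2j}\cdot}\|_{L^{\rho}(0,T)}$; since each of these time norms is bounded both by a $(\nu2^{2j})^{-1/\bullet}$ factor and by a $T^{1/\bullet}$ factor, a careful accounting reproduces the prefactors $\nu^{-1/\rho}$, $\nu^{1/\rho_1-1}$ and the (possibly non-sharp) factors $(1+\nu T)^{1/\rho}$, $(1+\nu T)^{1+1/\rho-1/\rho_1}$ of the statement. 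What survives is a residual commutator contribution bounded by $C\,\nu^{-1/\rho}(1+\nu T)^{1/\rho}\int_0^T V'_{p_1}(t')\,\|f\|_{\tilde L^\infty_{t'}B^s_{p,r}}\,dt'$.

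To close the loop I would first run the whole scheme with $\rho=\infty$ (so $\tfrac2\rho=0$ and the parabolic gains vanish), which yields a self-contained inequality $X(t)\le C\|f_0\|_{B^s_{p,r}}+C\nu^{1/\rho_1-1}(1+\nu T)^{1-1/\rho_1}\|g\|_{\tilde L^{\rho_1}_tB^{s-2+2/\rho_1}_{p,r}}+C\int_0^tV'_{p_1}(t')X(t')\,dt'$ for $X(t):=\|f\|_{\tilde L^\infty_tB^s_{p,r}}$, so Gronwall gives $X(T)\lm e^{CV_{p_1}(T)}(\cdots)$. Substituting this bound on $\|f\|_{\tilde L^\infty_TB^s_{p,r}}$ into the residual commutator term of the general-$\rho$ estimate and absorbing the resulting polynomial factor into the exponential via $x\,e^{Cx}\le e^{(C+1)x}$ (after bounding $V_{p_1}(T)$ by the larger quantity $(1+\nu T)^{1/\rho}V_{p_1}(T)$) upgrades the prefactor to $e^{C(1+\nu T)^{1/\rho}V_{p_1}(T)}$, which delivers the asserted inequality. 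The main obstacle is the borderline commutator estimate---getting the sharp regularity range on $s$ and dealing with the endpoint $r=1$---while the careful bookkeeping of the $\nu$- and $(1+\nu T)$-powers through the time convolutions (and the endpoints $p\in\{1,\infty\}$ in the localized $L^p$ bound) is the other fiddly point; all of this is carried out in detail in \cite{Bahouri2011}.
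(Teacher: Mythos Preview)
The paper does not supply its own proof of this lemma: it is quoted verbatim as a known result from \cite{Bahouri2011}, with no accompanying argument. Your sketch is precisely the standard proof from that reference---localize via $\Delta_j$, use the annular Bernstein/heat-flow coercivity, control the commutator $[v\cdot\nabla,\Delta_j]f$ via the paradifferential estimates (which dictate the range of $s$ and the two cases for $V'_{p_1}$), pass to Chemin--Lerner norms with Young's inequality in time, and close by Gronwall at $\rho=\infty$ before feeding back into the general-$\rho$ bound. So there is nothing to compare: you have reconstructed the intended argument, and the paper simply cites it.
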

The following refined estimate in Besov spaces with index $0$ is crucial to estimate $\Gamma$.
\begin{lemm}\cite{Bahouri2011}
Assume that $v$ is divergence-free and that $f$ satisfies \eqref{transport} with $\nu=0$. There exists a constant $C$, depending
only on d, such that for all $1 \leq p, r \leq \infty$ and $t\in[0, T]$, we have
\begin{align}\label{Lemma1''}
\|f\|_{\tilde{L}^{\infty}_t (B^0_{p,r})} \leq C	(\|f_0\|_{B^0_{
p,r}} + \|g\|_{L^1_t(B^0_{p,r})})(1 + V (t)),
\end{align}
with $V(t) =  C\int_0^t \|\nabla v\|_{L^{\infty}}ds$.
\end{lemm}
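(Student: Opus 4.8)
The plan is to follow the standard Littlewood--Paley/commutator route for transport equations, but carrying out the bookkeeping at the borderline regularity index $s=0$ so as to gain the linear factor $1+V(t)$ rather than the exponential $e^{CV(t)}$ that the classical estimate of Lemma \ref{transportdiffusion} would produce. First I would localize \eqref{transport} (with $\nu=0$) in frequency: applying $\Delta_j$ gives
\begin{equation*}
\partial_t \Delta_j f + v\cdot\nabla \Delta_j f = \Delta_j g + R_j,\qquad R_j:=v\cdot\nabla\Delta_j f - \Delta_j(v\cdot\nabla f),
\end{equation*}
where $R_j$ is the usual commutator. Since $v$ is divergence-free the transport term does not change $\|\Delta_j f\|_{L^p}$, so a standard energy/characteristics argument on the localized equation yields
\begin{equation*}
\|\Delta_j f(t)\|_{L^p}\le \|\Delta_j f_0\|_{L^p}+\int_0^t\|\Delta_j g\|_{L^p}\,ds+\int_0^t\|R_j\|_{L^p}\,ds.
\end{equation*}

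The crux is the commutator estimate. Using the Bony decomposition to split $v\cdot\nabla f=T_v\nabla f+T_{\nabla f}v+R(v,\nabla f)$ and $\mathrm{div}\,v=0$, one shows — this is the refined commutator lemma behind the $B^0$ theory in \cite{Bahouri2011} — that for a suitable summable sequence $(c_j)$ with $\sum_j c_j\le 1$,
\begin{equation*}
\|R_j\|_{L^p}\le C c_j\,\|\nabla v\|_{L^\infty}\bigl(\|f\|_{\tilde L^\infty_t(B^0_{p,r})}+\|S_{j+N_0}f\|_{L^\infty_t(L^p)}\bigr)
\end{equation*}
for a fixed integer $N_0$; the key point at index $0$ is that the logarithmic loss one normally fears is absorbed because we do not multiply by $2^{js}$ with $s\neq 0$, so no extra $j$-weight appears. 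Summing (or taking the $\ell^r$ norm) over $j$, and using that $\|S_{j+N_0}f\|_{L^p}\lesssim \|f\|_{B^0_{p,r}}$ uniformly, gives
\begin{equation*}
\|f\|_{\tilde L^\infty_t(B^0_{p,r})}\le C\Bigl(\|f_0\|_{B^0_{p,r}}+\|g\|_{L^1_t(B^0_{p,r})}\Bigr)+C\int_0^t\|\nabla v\|_{L^\infty}\,\|f\|_{\tilde L^\infty_s(B^0_{p,r})}\,ds.
\end{equation*}

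Finally I would close the estimate. Because the inhomogeneous terms on the right only involve the \emph{full} time interval $[0,t]$, I cannot apply Gr\"onwall naively; instead I set $M(t):=\|f\|_{\tilde L^\infty_t(B^0_{p,r})}$, $A(t):=C(\|f_0\|_{B^0_{p,r}}+\|g\|_{L^1_t(B^0_{p,r})})$ and $V(t)=C\int_0^t\|\nabla v\|_{L^\infty}\,ds$, obtaining $M(t)\le A(t)+V(t)M(t)$. When $V(t)\le 1/2$ this immediately gives $M(t)\le 2A(t)\le 2A(t)(1+V(t))$, which is \eqref{Lemma1''}. For general $t$, one subdivides $[0,t]$ into consecutive intervals $[t_k,t_{k+1}]$ on each of which $\int_{t_k}^{t_{k+1}}C\|\nabla v\|_{L^\infty}\le 1/2$; iterating the estimate across the $K\approx 1+2V(t)$ such pieces and summing the geometric accumulation produces the linear bound $M(t)\le C(\|f_0\|_{B^0_{p,r}}+\|g\|_{L^1_t(B^0_{p,r})})(1+V(t))$, since each restart contributes the inhomogeneous data at most once and the number of pieces is $O(1+V(t))$. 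The main obstacle is the sharp commutator bound at index $0$: one must verify that the paraproduct and remainder pieces of $R_j$ genuinely produce a summable sequence times $\|\nabla v\|_{L^\infty}$ with no residual $j$-dependence, which is exactly where the divergence-free condition and the structure of $B^0$ (as opposed to $B^s$ with $s>0$) are used; everything after that is the elementary interval-splitting argument above.
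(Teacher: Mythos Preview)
The paper does not prove this lemma; it simply cites \cite{Bahouri2011}. Your outline follows the standard localization/commutator route correctly up to the integral inequality
\[
M(t)\le A(t)+C\int_0^t\|\nabla v\|_{L^\infty}\,M(s)\,ds,
\]
but the final interval-splitting step is a genuine gap. Subdividing $[0,t]$ into $K\approx 1+2V(t)$ pieces on each of which $C\int\|\nabla v\|_{L^\infty}\le\tfrac12$ does \emph{not} give a linear bound: on each piece the absorption $(1-\tfrac12)^{-1}=2$ multiplies the inherited datum $\|f(t_k)\|_{B^0_{p,r}}$, so after $K$ restarts you get $M\lesssim 2^{K}A\approx e^{cV(t)}A$, which is exactly the exponential Gr\"onwall bound you were trying to avoid. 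Your integral inequality is simply not strong enough---no rearrangement of it can beat Gr\"onwall.

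The actual argument in \cite{Bahouri2011} (Theorem~3.14) is structurally different. One writes $f=\sum_q\bar f_q$ where $\bar f_q$ solves the transport equation with \emph{frequency-localized} data $\Delta_q f_0$ and forcing $\Delta_q g$; the plain $L^p$ transport estimate gives $\|\bar f_q(t)\|_{L^p}\le a_q:=\|\Delta_q f_0\|_{L^p}+\int_0^t\|\Delta_q g\|_{L^p}$. One then uses two bounds on $\|\Delta_j\bar f_q\|_{L^p}$: the trivial one $\le Ca_q$, and the \emph{exponential} transport estimate at regularity $\pm1$ applied to $\bar f_q$, which yields $\|\Delta_j\bar f_q\|_{L^p}\le C\,2^{-|j-q|}e^{CV(t)}a_q$. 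Taking the minimum and summing over $q$ is a discrete convolution of $(a_q)$ with the kernel $K(m)=\min(1,2^{-|m|}e^{CV(t)})$, whose $\ell^1$ norm is $O(1+V(t))$; Young's inequality then gives \eqref{Lemma1''}. The linear factor thus comes from the \emph{width} of the region where the exponential estimate is no better than the trivial one---not from any time-splitting.
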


We have the following product laws.
\begin{lemm}\label{Lemma1}\cite{Bahouri2011}
For any $\epsilon>0$, there exists $C>0$  such that
$$ \|uv\|_{B^0_{\infty,1}}\leq C(\|u\|_{L^{\infty}}\|v\|_{B^0_{\infty,1}}+\|u\|_{B^0_{\infty,1}}\|v\|_{B^\epsilon_{\infty,\infty}}). $$
\end{lemm}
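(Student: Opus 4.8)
The plan is to reduce everything to Bony's paraproduct decomposition $uv=T_uv+T_vu+R(u,v)$, with $T_uv=\sum_{j}S_{j-1}u\,\Delta_jv$, $T_vu=\sum_{j}S_{j-1}v\,\Delta_ju$ and $R(u,v)=\sum_{|j-j'|\le1}\Delta_ju\,\Delta_{j'}v=\sum_k\Delta_ku\,\widetilde\Delta_kv$ (where $\widetilde\Delta_k=\Delta_{k-1}+\Delta_k+\Delta_{k+1}$), and to estimate the three pieces in $B^0_{\infty,1}$ one at a time.

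For the first paraproduct I would use the standard spectral localization: each summand $S_{j-1}u\,\Delta_jv$ has Fourier support in a dyadic annulus $2^j\widetilde{\mathcal C}$, so $\Delta_q(T_uv)$ collects only the terms with $|j-q|\le N_0$ for a fixed integer $N_0$. Combining $\|S_{j-1}u\|_{L^\infty}\lm\|u\|_{L^\infty}$ (Young's inequality together with the scale-invariant $L^1$ bound on the low-frequency kernel) with $\|\Delta_q\|_{L^\infty\to L^\infty}\lm1$ gives $\|\Delta_q(T_uv)\|_{L^\infty}\lm\|u\|_{L^\infty}\sum_{|j-q|\le N_0}\|\Delta_jv\|_{L^\infty}$, and summing over $q$ yields $\|T_uv\|_{B^0_{\infty,1}}\lm\|u\|_{L^\infty}\|v\|_{B^0_{\infty,1}}$. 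The second paraproduct is handled identically with $u$ and $v$ swapped, producing $\|T_vu\|_{B^0_{\infty,1}}\lm\|v\|_{L^\infty}\|u\|_{B^0_{\infty,1}}$; since $\sum_{j\ge-1}2^{-j\epsilon}<\infty$ we have $B^\epsilon_{\infty,\infty}\hookrightarrow L^\infty$, so this term is $\lm\|u\|_{B^0_{\infty,1}}\|v\|_{B^\epsilon_{\infty,\infty}}$.

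The hard part will be the remainder $R(u,v)$, and this is where the hypothesis $\epsilon>0$ is essential. Each summand $\Delta_ku\,\widetilde\Delta_kv$ is spectrally supported in a ball of radius $\sim2^k$, so $\Delta_qR(u,v)$ receives contributions only from $k\ge q-N_0$; using $\|\widetilde\Delta_kv\|_{L^\infty}\lm2^{-k\epsilon}\|v\|_{B^\epsilon_{\infty,\infty}}$ this gives $\|\Delta_qR(u,v)\|_{L^\infty}\lm\sum_{k\ge q-N_0}\|\Delta_ku\|_{L^\infty}\|\widetilde\Delta_kv\|_{L^\infty}\lm\|v\|_{B^\epsilon_{\infty,\infty}}\sum_{k\ge q-N_0}2^{-k\epsilon}\|\Delta_ku\|_{L^\infty}$. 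When I sum over $q\ge-1$ and exchange the order of summation, for each fixed $k$ the index $q$ runs over at most $k+N_0+2$ values, leaving $\|R(u,v)\|_{B^0_{\infty,1}}\lm\|v\|_{B^\epsilon_{\infty,\infty}}\sum_{k\ge-1}(k+N_0+2)2^{-k\epsilon}\|\Delta_ku\|_{L^\infty}$; since $\sup_{k\ge-1}(k+N_0+2)2^{-k\epsilon}=:C_\epsilon<\infty$ this is $\lm C_\epsilon\|u\|_{B^0_{\infty,1}}\|v\|_{B^\epsilon_{\infty,\infty}}$. In other words, the borderline logarithmic loss that would appear in the endpoint product law on $B^0_{\infty,1}$ is absorbed by the arbitrarily small extra regularity $\epsilon$ on the second factor. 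Adding the three contributions gives the stated inequality with $C=C(d,\epsilon)$; the paraproduct estimates are routine and the remainder estimate is the only genuine obstacle.
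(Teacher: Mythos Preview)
Your argument is correct: the Bony decomposition together with the standard paraproduct localization handles $T_uv$ and $T_vu$ directly, and your treatment of the remainder is the right one --- the extra $\epsilon$ regularity on $v$ absorbs the logarithmic loss coming from the $k+N_0+2$ terms in the $q$-sum. The paper does not supply its own proof of this lemma; it simply cites \cite{Bahouri2011}, and what you have written is exactly the standard proof one finds there (or reconstructs from the paraproduct calculus developed in that reference).
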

We introduce the following lemma to describe the action of the heat equation.
\begin{lemm}\label{Lemma1'}\cite{Bahouri2011}
Let $\mathcal{C}$ be an annulus. Positive constants $c$ and $C$ exist such that for any $p\in[1,+\infty]$ and any couple $(t,\lambda)$ of positive real numbers, we have
\begin{align}
Supp~\hat{u} \subset \lambda\mathcal{C} \Rightarrow \|e^{t\Delta}u\|_{L^p} \leq Ce^{-ct\lambda^2}\|u\|_{L^p}.
\end{align}
\end{lemm}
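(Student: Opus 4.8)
\textbf{Proof proposal for Lemma \ref{Lemma1'}.}

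The plan is to reduce the statement to a Bernstein-type inequality for the heat kernel by exploiting the spectral localization. Since $\widehat{u}$ is supported in the annulus $\lambda\mathcal{C}$, I would pick an auxiliary smooth cutoff $\phi\in\mathcal{D}(\mathbb{R}^d\setminus\{0\})$ that equals $1$ on a neighborhood of $\mathcal{C}$ and is supported in a slightly larger annulus $\mathcal{C}'$ (so that $\mathrm{dist}(0,\mathcal{C}')>0$). Then $\widehat{u}(\xi)=\phi(\lambda^{-1}\xi)\widehat{u}(\xi)$, and therefore, at the level of the heat semigroup,
\begin{align*}
e^{t\Delta}u = \mathcal{F}^{-1}\big(e^{-t|\xi|^2}\phi(\lambda^{-1}\xi)\widehat{u}(\xi)\big) = g_{t,\lambda}\star u,
\end{align*}
where $g_{t,\lambda}=\mathcal{F}^{-1}\big(e^{-t|\xi|^2}\phi(\lambda^{-1}\xi)\big)$. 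By Young's convolution inequality, $\|e^{t\Delta}u\|_{L^p}\leq\|g_{t,\lambda}\|_{L^1}\|u\|_{L^p}$ for every $p\in[1,+\infty]$, so the whole lemma reduces to the kernel bound $\|g_{t,\lambda}\|_{L^1}\leq Ce^{-ct\lambda^2}$.

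To estimate $\|g_{t,\lambda}\|_{L^1}$, I would first rescale: setting $\xi=\lambda\eta$ gives $g_{t,\lambda}(x)=\lambda^d\,h_{t\lambda^2}(\lambda x)$ with $h_s(y)=\mathcal{F}^{-1}\big(e^{-s|\eta|^2}\phi(\eta)\big)(y)$, hence $\|g_{t,\lambda}\|_{L^1}=\|h_{t\lambda^2}\|_{L^1}$. So it suffices to show $\|h_s\|_{L^1}\leq Ce^{-cs}$ for all $s>0$, where $\phi$ is supported in an annulus bounded away from the origin, say $\{c_0\leq|\eta|\leq C_0\}$. For the $L^1$ bound I would use the standard device of controlling $h_s$ in weighted $L^2$: by Plancherel,
\begin{align*}
\|h_s\|_{L^1} \leq C\big(\|h_s\|_{L^2} + \||y|^{d}h_s\|_{L^2}\big)^{1/2}\|h_s\|_{L^2}^{1/2}
\end{align*}
up to constants (Cauchy--Schwarz splitting $\mathbb{R}^d$ into $|y|\leq 1$ and $|y|\geq 1$, then using $\||y|^k h_s\|_{L^2}=\|\partial^k_\eta(e^{-s|\eta|^2}\phi)\|_{L^2}$ on the latter region), and on the support of $\phi$ one has $|\eta|^2\geq c_0^2$, so $e^{-s|\eta|^2}\leq e^{-c_0^2 s}$ pointwise. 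Differentiating $e^{-s|\eta|^2}\phi(\eta)$ produces at most polynomial-in-$s$ factors (from derivatives hitting the exponential, each bringing down a factor $s\eta$, with $|\eta|\leq C_0$ on the support), all multiplied by the uniformly-in-$s$ bounded quantity $e^{-s|\eta|^2}\leq e^{-c_0^2 s}$ on $\mathrm{Supp}\,\phi$; since any polynomial in $s$ times $e^{-c_0^2 s}$ is $\leq Ce^{-(c_0^2/2)s}$, I obtain $\|h_s\|_{L^1}\leq Ce^{-cs}$ with $c=c_0^2/2$, say. Tracing back the rescaling gives $\|g_{t,\lambda}\|_{L^1}\leq Ce^{-ct\lambda^2}$, and combining with Young's inequality finishes the proof with constants $c,C$ depending only on $d$ (and on the fixed choice of $\mathcal{C}$, $\mathcal{C}'$, $\phi$).

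The routine but slightly delicate point — the ``main obstacle'' such as it is — is making the polynomial-in-$s$ growth from the derivatives of $e^{-s|\eta|^2}$ genuinely harmless; this works precisely because on $\mathrm{Supp}\,\phi$ the frequency is bounded away from $0$, so the exponential decay $e^{-c_0^2 s}$ always dominates. One must also be a little careful that the constants do not depend on $\lambda$: this is exactly what the rescaling $\xi=\lambda\eta$ achieves, transferring all $\lambda$-dependence into the single factor $\lambda^2$ in the exponent and leaving a $\lambda$-free kernel $h_s$. An alternative, essentially equivalent route is to estimate $\|g_{t,\lambda}\|_{L^1}$ directly via integration by parts in $\xi$ to gain decay $|x|^{-N}$, splitting the $x$-integral at $|x|\sim 1$; I would present whichever is shorter, but the weighted-$L^2$/Plancherel argument above keeps the bookkeeping minimal.
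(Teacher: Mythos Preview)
Your proof is correct and is essentially the standard argument from the cited reference \cite{Bahouri2011}; the paper itself does not supply a proof of this lemma but only quotes it. The one cosmetic slip is the displayed inequality for $\|h_s\|_{L^1}$, whose exponents are a bit garbled, but your parenthetical explanation (split at $|y|=1$, Cauchy--Schwarz, and $\||y|^k h_s\|_{L^2}=\|\partial_\eta^k(e^{-s|\eta|^2}\phi)\|_{L^2}$) is exactly the right argument and yields the clean bound $\|h_s\|_{L^1}\leq C\big(\|h_s\|_{L^2}+\||y|^N h_s\|_{L^2}\big)$ for any integer $N>d/2$.
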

The following commutator lemma is useful to estimate $\Gamma$.
\begin{lemm}\cite{2015Elgindi,2011Hmidi}\label{Lemma2}
Let ${\rm div}~u=0$ and $R=\Delta^{-1}{\rm curl}~{\rm div}$.\\
(1) For every $(p,r)\in[2,\infty)\times[1,\infty]$, there exists a constant $C=C(p,r)$ such that
\begin{align}
\|[R,u\cdot\nabla]\tau\|_{B^{0}_{p,r}}\leq C\|\nabla u\|_{L^p}(\|\tau\|_{B^0_{\infty,r}}+\|\tau\|_{L^p}).
\end{align}
(2) For every $(r,p)\in[1,\infty]\times(1,\infty)$ and $\varepsilon>0$, there exists a constant $C=C(r,p,\varepsilon)$ such that
\begin{align}
\|[R,u\cdot\nabla]\tau\|_{B^{0}_{\infty,r}}\leq C(\|\Omega\|_{L^{\infty}}+\|\Omega\|_{L^p})(\|\tau\|_{B^\varepsilon_{\infty,r}}+\|\tau\|_{L^p}).
\end{align}
\end{lemm}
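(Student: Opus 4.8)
\noindent\emph{Proof sketch.}
All three estimates are Littlewood--Paley/paraproduct statements about the zeroth order operator $R=\Delta^{-1}\mathrm{curl}\,\mathrm{div}$, and the plan is to reduce them to such. Acting componentwise, $R$ is a finite linear combination of the Riesz-type operators $\partial_i\partial_j\Delta^{-1}$, that is, Fourier multipliers whose symbols $-\xi_i\xi_j/|\xi|^2$ are smooth on $\mathbb R^d\setminus\{0\}$ and homogeneous of degree $0$; in $d=2$ the same is true of $\nabla u$ expressed through the scalar vorticity $\omega=\mathrm{curl}\,u$, so that $\|\nabla u\|_{L^p}\lesssim\|\omega\|_{L^p}$ for $1<p<\infty$ and $\|\Delta_q\nabla u\|_{L^\infty}\lesssim\|\omega\|_{L^\infty}$ for $q\ge0$. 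Finally, since $\mathrm{div}\,u=0$ and $R$ commutes with every $\partial_k$, one has the algebraic identity $[R,u\cdot\nabla]\tau=\partial_k\big([R,u^k]\tau\big)$, where $[R,u^k]$ denotes the commutator of $R$ with multiplication by $u^k$; thus (2) and (3) will follow from an estimate for $[R,u^k]\tau$, after one differentiation which must be paid for by the commutator. All the computations are carried out first for mollified data and then passed to the limit.

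For (1) I would argue block by block. For $j\ge0$, homogeneity of the symbol $m$ of $R$ gives $m(\xi)\varphi(2^{-j}\xi)=\Phi(2^{-j}\xi)$ with $\Phi=m\varphi\in\mathcal D(\mathcal C)$, hence $\Delta_jR\tau=\big(2^{jd}\check\Phi(2^j\cdot)\big)\ast\tau$ and $\|\Delta_jR\tau\|_{L^\infty}\le\|\check\Phi\|_{L^1}\|\tau\|_{L^\infty}$ with $\|\check\Phi\|_{L^1}$ independent of $j$; the block $\Delta_{-1}R\tau$ is treated in the same way (interpreting $R\tau$ modulo constants, or using the additional integrability of $\tau$ available in the applications). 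Taking the supremum over $j$ yields $\|R\tau\|_{B^0_{\infty,\infty}}\lesssim\|\tau\|_{L^\infty}$; this is precisely the statement that an order-$0$ multiplier maps $L^\infty$ into $B^0_{\infty,\infty}$, and the same computation underlies the bound $\|\Delta_q\nabla u\|_{L^\infty}\lesssim\|\omega\|_{L^\infty}$ used above.

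For (2) and (3) I would expand $u^k\partial_k\tau$ and $u^kR\partial_k\tau$ by Bony's decomposition and regroup, so that $[R,u^k]\tau$ splits into the commutator paraproduct $[R,T_{u^k}]\partial_k\tau=\sum_q[R,S_{q-1}u^k]\partial_k\Delta_q\tau$ plus lower order pieces made of the paraproduct of $\partial_k\tau$ by $u^k$ and the associated remainder. Writing $R$ restricted to frequencies of size $\sim2^q$ as convolution against a kernel $h_q$ with $\|h_q\|_{L^1}\lesssim1$ and $\int|y|\,|h_q(y)|\,dy\lesssim2^{-q}$, each commutator block equals $\int h_q(y)\big(S_{q-1}u^k(x-y)-S_{q-1}u^k(x)\big)\partial_k\Delta_q\tau(x-y)\,dy$; a first-order Taylor expansion of $S_{q-1}u^k$ together with the moment bound produces a gain of exactly one derivative, which cancels $\partial_k$ and leaves, after taking $\ell^r$ in $q$, the bound $\|\nabla u\|_{L^p}\,\|\tau\|_{B^0_{\infty,r}}$ needed for (2) (H\"older with the increment in $L^p$ and $\partial_k\Delta_q\tau$ in $L^\infty$). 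For (3), where $p=\infty$, one instead estimates $\|\nabla S_{q-1}u\|_{L^\infty}\lesssim q\,\|\omega\|_{L^\infty}+\|\omega\|_{L^p}$ by splitting $\nabla u$ into high and low frequencies (the block estimate of (1) for the former, Bernstein and $\|\nabla u\|_{L^p}\lesssim\|\omega\|_{L^p}$ for the latter) and absorbs the factor $q$ into the summable weight $2^{-\varepsilon q}$ furnished by $\tau\in B^\varepsilon_{\infty,r}$, which yields $(\|\omega\|_{L^\infty}+\|\omega\|_{L^p})\,\|\tau\|_{B^\varepsilon_{\infty,r}}$. The remaining pieces carry no transport derivative to cancel; after the regrouping they can again be organised so as to display either a commutator with a frequency-localised piece of $u$ or a derivative falling on $\tau$, and are controlled by the $L^p$- and $B^s_{p,r}$-boundedness of $R$, $\Delta_q$ and $S_q$ (valid for $1<p<\infty$), by Bernstein's inequalities, by the favourable exponents in $d=2$, and, for the $p=\infty$ target, again by the $\varepsilon$ of extra regularity; they contribute the terms $\|\tau\|_{L^p}$ in (2) and the lower order $\|\omega\|_{L^p}\|\tau\|_{L^p}$-type contributions in (3). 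This is the scheme of \cite{2011Hmidi,Loukas,2015Elgindi}.

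The delicate point is the borderline nature of the estimate: the regularity index is $0$ and, in (3), the integrability index is $\infty$, so that $R$ is not bounded on $L^\infty$, $\nabla u$ is not controlled by $\omega$ in $L^\infty$, and $B^0_{\infty,r}$ does not embed into $L^\infty$; hence the dyadic sums do not converge by soft arguments. The three mechanisms that make them converge are: (i) the sharp first-moment bound $\int|y|\,|h_q(y)|\,dy\lesssim2^{-q}$ for the truncated kernels of $R$, which gives exactly the one derivative needed to cancel the transport derivative; (ii) the low/high-frequency splitting of $\nabla u$, separating the roles of $\|\omega\|_{L^p}$ (Calder\'on--Zygmund, low frequencies) and $\|\omega\|_{L^\infty}$ (high frequencies) in (3); and (iii) the extra $\varepsilon$ of regularity on $\tau$, which absorbs the logarithmic losses generated by (ii). Once these are installed, the remaining bookkeeping is routine.
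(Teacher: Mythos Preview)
The paper does not supply its own proof of this lemma; it is quoted directly from \cite{2015Elgindi,Loukas,2011Hmidi} and used as a black box. Your sketch is correct and follows precisely the scheme of those references: the block-kernel argument for (1), the divergence-form rewriting $[R,u\cdot\nabla]\tau=\partial_k([R,u^k]\tau)$ together with the Bony paraproduct decomposition of $[R,u^k]\tau$, the first-moment bound on the truncated kernels of $R$ to gain the derivative, and the low/high-frequency splitting of $\nabla u$ with the $\varepsilon$ of extra regularity on $\tau$ to absorb the logarithmic loss in the $L^\infty$ case. There is nothing to compare against in the paper itself; your outline is the intended one.
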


Let $\Lambda^s f=\mathcal{F}^{-1}(|\xi|^s \widehat{f})$.
The following lemma is the Gagliardo-Nirenberg inequality of Sobolev type.
\begin{lemm}\cite{1959On}\label{Lemma3}
Let $d\geq2,~p\in[2,+\infty)$ and $0\leq s,s_1\leq s_2$, then there exists a constant $C$ such that
$$\|\Lambda^{s}f\|_{L^{p}}\leq C \|\Lambda^{s_1}f\|^{1-\theta}_{L^{2}}\|\Lambda^{s_2} f\|^{\theta}_{L^{2}},$$
where $0\leq\theta\leq1$ and $\theta$ satisfy
$$ s+d(\frac 1 2 -\frac 1 p)=s_1 (1-\theta)+\theta s_2.$$
Note that we require that $0<\theta<1$, $0\leq s_1\leq s$, when $p=\infty$.
\end{lemm}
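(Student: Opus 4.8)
\emph{Proof sketch.} The plan is to prove the inequality by a homogeneous Littlewood--Paley decomposition together with Bernstein's inequality (see \cite{Bahouri2011}) and a low/high-frequency splitting optimized over the cutoff frequency; this keeps the argument inside the dyadic framework of Proposition \ref{prop0}. First I would dispose of the degenerate regimes. If $s_1=s_2$, the scaling relation forces $s+d(\frac12-\frac1p)=s_1$, so the claim is exactly the Sobolev embedding $\dot H^{s_1}\hookrightarrow\dot W^{s,p}$; likewise $\theta=0$ (resp.\ $\theta=1$) gives $s+d(\frac12-\frac1p)=s_1$ (resp.\ $=s_2$) and again reduces to $\dot H^{d(\frac12-\frac1p)}\hookrightarrow L^p$, valid for $p\in[2,\infty)$ by Bernstein. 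Hence from now on assume $s_1<s_2$ and $0<\theta<1$; this is also precisely the range one is allowed to use when $p=\infty$.

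Write $f=\sum_{j\in\mathbb{Z}}\dot\Delta_j f$ with the homogeneous blocks built from $\varphi$ in Proposition \ref{prop0}, so that $\widehat{\dot\Delta_j f}$ is supported in the annulus $2^j\mathcal{C}$. For the target exponent $p\in[2,\infty]$, Bernstein's inequality gives
\begin{align*}
\|\Lambda^s\dot\Delta_j f\|_{L^p}\lesssim 2^{j(s+d(\frac12-\frac1p))}\|\dot\Delta_j f\|_{L^2}\lesssim \min\big(2^{j\alpha}a_j,\ 2^{-j\beta}b_j\big),
\end{align*}
where $a_j:=\|\Lambda^{s_1}\dot\Delta_j f\|_{L^2}$, $b_j:=\|\Lambda^{s_2}\dot\Delta_j f\|_{L^2}$, and where a direct computation using $s+d(\frac12-\frac1p)=(1-\theta)s_1+\theta s_2$ yields $\alpha:=s+d(\frac12-\frac1p)-s_1=\theta(s_2-s_1)>0$ and $\beta:=s_2-s-d(\frac12-\frac1p)=(1-\theta)(s_2-s_1)>0$, with $\alpha+\beta=s_2-s_1$ and $\theta=\alpha/(\alpha+\beta)$. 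By Plancherel and the bounded overlap of the supports $2^j\mathcal{C}$ (Proposition \ref{prop0}) one has $\|(a_j)_j\|_{\ell^2}\lesssim\|\Lambda^{s_1}f\|_{L^2}$ and $\|(b_j)_j\|_{\ell^2}\lesssim\|\Lambda^{s_2}f\|_{L^2}$.

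Summing over $j$ and splitting at an integer $N$, applying Cauchy--Schwarz on each piece together with the convergent geometric series $\sum_{j\le N}2^{2j\alpha}\lesssim 2^{2N\alpha}$ and $\sum_{j>N}2^{-2j\beta}\lesssim 2^{-2N\beta}$ (here $\alpha,\beta>0$ is essential; when $p=\infty$ this is also what secures $\ell^1$-summability of $(\|\Lambda^s\dot\Delta_j f\|_{L^\infty})_j$, hence the need for $0<\theta<1$), I obtain
\begin{align*}
\|\Lambda^s f\|_{L^p}\le\sum_{j\le N}\|\Lambda^s\dot\Delta_j f\|_{L^p}+\sum_{j>N}\|\Lambda^s\dot\Delta_j f\|_{L^p}\lesssim 2^{N\alpha}\|\Lambda^{s_1}f\|_{L^2}+2^{-N\beta}\|\Lambda^{s_2}f\|_{L^2}.
\end{align*}
Choosing $N\in\mathbb{Z}$ so that $2^{N(\alpha+\beta)}$ is comparable to $\|\Lambda^{s_2}f\|_{L^2}/\|\Lambda^{s_1}f\|_{L^2}$ (possible up to a bounded multiplicative loss since $N$ is an integer and $\alpha+\beta=s_2-s_1>0$) balances the two terms, giving
$$\|\Lambda^s f\|_{L^p}\lesssim \|\Lambda^{s_1}f\|_{L^2}^{\beta/(\alpha+\beta)}\|\Lambda^{s_2}f\|_{L^2}^{\alpha/(\alpha+\beta)}=\|\Lambda^{s_1}f\|_{L^2}^{1-\theta}\|\Lambda^{s_2}f\|_{L^2}^{\theta},$$
which is the assertion. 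The only genuinely delicate point is the bookkeeping of the endpoint cases: one must check that $\theta\in\{0,1\}$ and $s_1=s_2$ fall back onto plain Sobolev embedding, and that at $p=\infty$ the strict inequalities $0<\theta<1$ are exactly what make the $\ell^1$ sum converge; the rest is the standard dyadic optimization.
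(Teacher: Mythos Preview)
The paper does not actually prove this lemma: it is stated with a citation to \cite{1959On} (the classical Gagliardo--Nirenberg paper) and no argument is given. Your Littlewood--Paley proof is therefore not a comparison against anything in the paper, but it is a correct self-contained derivation. The dyadic splitting with Bernstein, the identification $\alpha=\theta(s_2-s_1)$, $\beta=(1-\theta)(s_2-s_1)$, the Cauchy--Schwarz/geometric-series bound on each piece, and the optimization over $N$ are all standard and sound; your handling of the degenerate cases ($\theta\in\{0,1\}$, $s_1=s_2$) and of the $p=\infty$ endpoint via the strict positivity of $\alpha,\beta$ is also correct. The only cosmetic point is that the paper sets up the \emph{nonhomogeneous} blocks in Proposition~\ref{prop0}, whereas your argument uses the homogeneous decomposition; this is harmless since the homogeneous dyadic partition is built from the same $\varphi$ and the estimate is scale-invariant, but you may want to say explicitly that you work with $\dot\Delta_j$ for all $j\in\mathbb{Z}$ rather than the $\Delta_j$ of Proposition~\ref{prop0}.
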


We introduce a commutator lemma.
\begin{lemm}\cite{Moser1966A}\label{Lemma4}
Let $s\geq 1$, $p,p_1,p_4\in (1,\infty)$ and $\frac 1 p =\frac 1 {p_1}+\frac 1 {p_2}=\frac 1 {p_3}+\frac 1 {p_4}$, then we have
\begin{align}
\|[\Lambda^s, f]g\|_{L^p}\leq C(\|\Lambda^{s}f\|_{L^{p_1}}\|g\|_{L^{p_2}}+\|\nabla f\|_{L^{p_3}}\|\Lambda^{s-1}g\|_{L^{p_4}}),
\end{align}
and
\begin{align}
\|[\Lambda^s, f]g\|_{L^p(\mathcal{L}^2)}\leq C(\|\Lambda^{s}f\|_{L^{p_1}}\|g\|_{L^{p_2}(\mathcal{L}^2)}+\|\nabla f\|_{L^{p_3}}\|\Lambda^{s-1}g\|_{L^{p_4}(\mathcal{L}^2)}).
\end{align}
\end{lemm}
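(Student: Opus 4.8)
The plan is to prove both estimates at once by the Littlewood--Paley/Bony paraproduct method, since the argument only ever applies $\Lambda^{s}$, $\Delta_{j}$, $S_{j}$ in the $x$-variable and $\mathcal{L}^{2}$ is a Hilbert (hence UMD) space: every tool I will use — Bernstein's inequalities, the Littlewood--Paley characterization of $L^{p}$ for $p\in(1,\infty)$, and the Fefferman--Stein vector-valued maximal inequality — has a Hilbert-valued version obtained by replacing $|\cdot|$ with $\|\cdot\|_{\mathcal{L}^{2}}$. So I would establish the scalar estimate in full and then observe that the $\mathcal{L}^{2}$-valued one follows verbatim, with $f$ scalar and $g$ valued in $\mathcal{L}^{2}$.

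First I would write $[\Lambda^{s},f]g=\Lambda^{s}(fg)-f\Lambda^{s}g=\sum_{j,k}[\Lambda^{s},\Delta_{j}f]\Delta_{k}g$ and split this double sum into the low--high part $\mathrm{I}=\sum_{j}[\Lambda^{s},S_{j-1}f]\Delta_{j}g$, the high--low part $\mathrm{II}=\sum_{j}[\Lambda^{s},\Delta_{j}f]S_{j-1}g$, and the high--high part $\mathrm{III}=\sum_{j}[\Lambda^{s},\Delta_{j}f]\widetilde{\Delta}_{j}g$, where $\widetilde{\Delta}_{j}=\Delta_{j-1}+\Delta_{j}+\Delta_{j+1}$. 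In $\mathrm{II}$ and $\mathrm{III}$ the commutator structure plays no real role: each summand has $x$-frequency of size $\simeq 2^{j}$ (supported in an annulus for $\mathrm{II}$, in a ball for $\mathrm{III}$), so on that range $\Lambda^{s}$ behaves like the scalar $2^{js}$ and this factor is moved onto $\Delta_{j}f$ to produce $\|\Delta_{j}\Lambda^{s}f\|_{L^{p_{1}}}$, while the surviving low-frequency factor is controlled by $\|g\|_{L^{p_{2}}}$ via $\|\Lambda^{s}S_{j-1}g\|_{L^{p_{2}}}\lesssim 2^{js}\|g\|_{L^{p_{2}}}$ (the top frequency dominates, $s>0$) and, for $\mathrm{III}$, the geometric sum $\sum_{k\le j}2^{ks}\lesssim 2^{js}$ (again $s\ge 1>0$). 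Closing the sum in $j$ with Hölder together with the Littlewood--Paley square function and the Fefferman--Stein maximal inequality — legitimate precisely because the stated ranges put $p,p_{1},p_{2}$ in $(1,\infty)$ — gives $\|\mathrm{II}\|_{L^{p}}+\|\mathrm{III}\|_{L^{p}}\lesssim\|\Lambda^{s}f\|_{L^{p_{1}}}\|g\|_{L^{p_{2}}}$.

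The step I expect to be the main obstacle is the low--high sum $\mathrm{I}$, which is the only place the single gained derivative appears. Here I would represent the Fourier multiplier $\xi\mapsto|\xi|^{s}\varphi(2^{-j}\xi)$ by a kernel $2^{j(d+s)}K_{s}(2^{j}\cdot)$ with $K_{s}$ Schwartz (so that $\||\cdot|\,K_{s}\|_{L^{1}}<\infty$) and insert the first-order Taylor expansion $S_{j-1}f(y)-S_{j-1}f(x)=\int_{0}^{1}\nabla S_{j-1}f\bigl(x+t(y-x)\bigr)\cdot(y-x)\,dt$ into the commutator integral; the factor $(y-x)$ absorbs one power of $2^{-j}$, and combining this with the standard bounds for band-limited functions (Bernstein/Nikolskii, since $\nabla S_{j-1}f$ has frequency $\lesssim 2^{j}$) yields the pointwise estimate $\bigl|[\Lambda^{s},S_{j-1}f]\Delta_{j}g(x)\bigr|\lesssim 2^{j(s-1)}M(\nabla f)(x)\,M(\Delta_{j}g)(x)$, with $M$ the Hardy--Littlewood maximal operator. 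Since $2^{j(s-1)}\|\Delta_{j}g\|\simeq\|\Delta_{j}\Lambda^{s-1}g\|$ and the summands of $\mathrm{I}$ are frequency-localized in the annuli $2^{j}\widetilde{\mathcal{C}}$, I would then use the Littlewood--Paley inequality for such lacunary pieces, Hölder, and the Fefferman--Stein vector-valued maximal inequality (all valid for $p,p_{3},p_{4}\in(1,\infty)$) to obtain $\|\mathrm{I}\|_{L^{p}}\lesssim\|\nabla f\|_{L^{p_{3}}}\|\Lambda^{s-1}g\|_{L^{p_{4}}}$. Adding the three bounds gives the first estimate; the $\mathcal{L}^{2}$-valued one is the identical computation with $\|\cdot\|_{\mathcal{L}^{2}}$ in place of $|\cdot|$ and the Hilbert-valued maximal and square-function inequalities.
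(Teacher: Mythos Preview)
The paper does not supply its own proof of this lemma: it is simply quoted from the literature (the citation \cite{Moser1966A}; this is the classical Kato--Ponce commutator estimate) and used as a black box, so there is no ``paper's approach'' to compare against.

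Your paraproduct argument is the standard modern proof and is essentially correct. The treatment of $\mathrm{I}$ (the kernel/Taylor expansion yielding the pointwise bound $2^{j(s-1)}M(\nabla f)M(\Delta_{j}g)$) is exactly how the gained derivative appears, and your handling of $\mathrm{III}$ via the geometric sum in $s>0$ is fine. One small point to tighten in $\mathrm{II}$: the second half of the commutator, $\sum_{j}\Delta_{j}f\,\Lambda^{s}S_{j-1}g$, is not itself frequency-localized in $j$, so you cannot literally ``close the sum in $j$'' by the square-function inequality on those pieces. The clean way is to rewrite this sum as the paraproduct $T_{\Lambda^{s}g}f$ (swap the order of summation) and bound it by $\|\Lambda^{s}f\|_{L^{p_{1}}}\|g\|_{L^{p_{2}}}$ via the usual $\dot{B}^{0}_{p_{1},1}\hookleftarrow\dot{F}^{0}_{p_{1},2}=L^{p_{1}}$-type argument, or simply observe that $\sum_{j}\Delta_{j}f\,\Lambda^{s}S_{j-1}g=f\Lambda^{s}g-\sum_{j}S_{j}f\,\Lambda^{s}\Delta_{j}g$ and treat the last sum as you did $\mathrm{I}$ with roles reversed. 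Also note that the hypotheses allow $p_{2},p_{3}\in(1,\infty]$; when either equals $\infty$ you should replace the vector-valued maximal inequality by the trivial $L^{\infty}$ bound $|S_{j-1}g|\le\|g\|_{L^{\infty}}$ (resp.\ $M(\nabla f)\le\|\nabla f\|_{L^{\infty}}$). Your remark that the $\mathcal{L}^{2}$-valued version follows verbatim is correct and is all that is needed for the second inequality.
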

The following lemma will be useful in the proof of global existence for the inviscid Hooke model.
\begin{lemm}\cite{2017Global}\label{Lemma7}
Let $\langle q \rangle=\sqrt{1+q^2}$. Assume $g\in H^s(\mathcal{L}^{2})$ with $\int_{\mathbb{R}^{d}} g\psi_\infty dq=0$ and $s\geq 0$, then there exists a constant $C$ such that
\begin{align}
\|\nabla_q\mathcal{U}g\|_{H^s(\mathcal{L}^{2})}+\|qg\|_{H^s(\mathcal{L}^{2})}\leq C\|\nabla_qg\|_{H^s(\mathcal{L}^{2})},
\end{align}
and
\begin{align}
\|q\nabla_q\mathcal{U}g\|_{H^s(\mathcal{L}^{2})}+\||q|^2g\|_{_{H^s(\mathcal{L}^{2})}}\leq C\|\langle q\rangle\nabla_qg\|_{H^s(\mathcal{L}^{2})}.
\end{align}
\end{lemm}
The following lemmas are about Calderon-Zygmund operator.
\begin{lemm}\cite{Bahouri2011,2015Elgindi}\label{Lemma5}
(1) For any $a\in[1,\infty)$ and $b\in[1,\infty]$, there exists positive constant $C$ such that
\begin{align}
\|\Delta_{-1}\nabla v\|_{L^{\infty}} \leq C\min\{\|\Omega\|_{L^a},\|v\|_{L^b}\}.
\end{align}
(2) For all $s\in\mathbb{R}$ and $1\leq p,r\leq\infty$, there exists a constant $C$ such that
\begin{align}
\|(Id-\Delta_{-1}) Rf\|_{B^s_{p,r}} \leq C\|f\|_{B^s_{p,r}}.
\end{align}
\end{lemm}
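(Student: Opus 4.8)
Here is the plan I would follow to prove Lemma~\ref{Lemma5}. The two assertions are independent Fourier‑multiplier estimates, and I would treat them in turn; throughout I use that $v$ is divergence‑free and I write $\omega=\partial_1 v_2-\partial_2 v_1$ for the scalar vorticity, which is pointwise comparable (hence comparable in every $L^a$) to the vorticity tensor $\Omega$.

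\textbf{Part (1).} The bound by $\|v\|_{L^b}$ is the easy one: $v\mapsto\Delta_{-1}\nabla v$ is convolution with $\nabla\mathcal{F}^{-1}\chi$, which is a Schwartz function because $\chi\in\mathcal{D}(B(0,\tfrac43))$, so Young's inequality gives $\|\Delta_{-1}\nabla v\|_{L^\infty}\le\|\nabla\mathcal{F}^{-1}\chi\|_{L^{b'}}\|v\|_{L^b}$, the constant being finite for every $b'\in[1,\infty]$, i.e.\ every $b\in[1,\infty]$. For the bound by $\|\Omega\|_{L^a}$ I would first use $\xi\cdot\hat v(\xi)=0$ to write, for each pair $j,k$, $\widehat{\partial_j v_k}(\xi)=\tfrac{\xi_j\xi_l}{|\xi|^2}\,\widehat{R_{lk}}(\xi)$ where $R_{lk}$ is a component of $\Omega$ (equal to $\pm\omega$ when $d=2$); thus each entry of $\Delta_{-1}\nabla v$ is the multiplier with symbol $m(\xi)=\chi(\xi)\tfrac{\xi_j\xi_l}{|\xi|^2}$ applied to a component of $\Omega$. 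Then I would split $\tfrac{\xi_j\xi_l}{|\xi|^2}=\tfrac1d\delta_{jl}+h_{jl}(\xi)$ with $h_{jl}$ smooth on $\mathbb{R}^d\setminus\{0\}$, homogeneous of degree $0$, of vanishing mean on the sphere, so that $m=\tfrac1d\delta_{jl}\chi+\chi h_{jl}$. The first term has Schwartz inverse transform; for the second, $\chi h_{jl}\in L^1$ gives boundedness of $\mathcal{F}^{-1}(\chi h_{jl})$, while $\mathcal{F}^{-1}(\chi h_{jl})=\mathcal{F}^{-1}\chi\ast\mathcal{F}^{-1}(h_{jl})$ is a Schwartz function convolved with a homogeneous degree $(-d)$ Calder\'on--Zygmund kernel, so a standard kernel estimate yields $|\mathcal{F}^{-1}(\chi h_{jl})(x)|\lesssim|x|^{-d}$ for $|x|\ge1$. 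Consequently $K:=\mathcal{F}^{-1}m\in L^q(\mathbb{R}^d)$ for every $q\in(1,\infty]$, and Young's inequality with exponents $(a,a',\infty)$ — admissible exactly because $a<\infty$ forces $a'>1$ — gives $\|\Delta_{-1}\partial_j v_k\|_{L^\infty}=\|K\ast R_{lk}\|_{L^\infty}\le\|K\|_{L^{a'}}\|R_{lk}\|_{L^a}\le C\|\Omega\|_{L^a}$; summing over the finitely many entries closes Part (1).

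\textbf{Part (2).} The symbol of $R=\Delta^{-1}\mathrm{curl}\,\mathrm{div}$ is a matrix $R(\xi)$ homogeneous of degree $0$ and $C^\infty$ on $\mathbb{R}^d\setminus\{0\}$, and I would exploit the dyadic localization. For $j\ge1$ and $g$ with Fourier support in $2^j\widetilde{\mathcal{C}}$, homogeneity $R(2^j\xi)=R(\xi)$ makes the operator $g\mapsto\Delta_j Rg$ convolution with $2^{jd}K_0(2^j\cdot)$ where $K_0=\mathcal{F}^{-1}(\varphi R)\in\mathcal{S}$, since $\varphi R\in C_c^\infty$ ($\varphi$ being supported away from the origin kills the singularity of $R$). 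Hence, using $\Delta_j\Delta_{j'}=0$ for $|j-j'|\ge2$, $\|\Delta_j Rf\|_{L^p}\le\|K_0\|_{L^1}\sum_{|j'-j|\le1}\|\Delta_{j'}f\|_{L^p}$ uniformly in $j\ge1$, and $j=0$ is the same once one notes $\varphi\chi R\in C_c^\infty$. Multiplying by $2^{js}$, taking $\ell^r$ over $j\ge0$ and using almost‑orthogonality of the blocks then gives $\|(Id-\Delta_{-1})Rf\|_{B^s_{p,r}}=\big\|(2^{js}\|\Delta_j Rf\|_{L^p})_{j\ge0}\big\|_{\ell^r}\le C'\|f\|_{B^s_{p,r}}$ for every $s\in\mathbb{R}$, $1\le p,r\le\infty$.

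The only genuinely non‑routine point is the kernel integrability invoked in Part (1): that $\mathcal{F}^{-1}\big(\chi(\xi)\,\xi_j\xi_l/|\xi|^2\big)$ belongs to $L^q$ for every $q>1$ but is not in $L^1$. This is precisely what pins down the admissible range, forcing $a\in[1,\infty)$ (not $a=\infty$) in the first bound while leaving the second bound valid for all $b\in[1,\infty]$. Everything else reduces to Young's inequality and Mikhlin‑type estimates on dyadic shells, so I expect the write‑up to be short once that Calder\'on--Zygmund kernel estimate is recorded (or, as in \cite{Bahouri2011,2015Elgindi}, quoted).
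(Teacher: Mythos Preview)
The paper does not prove this lemma at all: it is stated with the citation \cite{Bahouri2011,2015Elgindi} and used as a black box. So there is no proof in the paper to compare against; what you have written is a self-contained sketch of the standard argument one would extract from those references.

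Your proposal is correct and follows the expected route. Part~(2) is exactly the usual Mikhlin-on-dyadic-shells argument (the symbol of $R$ is $C^\infty$ away from the origin and homogeneous of degree $0$, so $\varphi(2^{-j}\cdot)R(\cdot)$ has uniformly $L^1$ inverse transform), which is how the result appears in \cite{Bahouri2011}. Part~(1) is also right in substance: the $\|v\|_{L^b}$ bound is immediate from Young, and for the $\|\Omega\|_{L^a}$ bound the Biot--Savart identity reduces $\Delta_{-1}\nabla v$ to a multiplier $\chi(\xi)\,m_0(\xi)$ with $m_0$ homogeneous of degree $0$ acting on $\omega$, whose kernel lies in $L^q$ for all $q>1$. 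Two small cosmetic points: your formula $\widehat{\partial_j v_k}=\tfrac{\xi_j\xi_l}{|\xi|^2}\widehat{R_{lk}}$ is written a bit loosely (in $d=2$ the correct identity is $\widehat{\partial_j v_k}=\pm\tfrac{\xi_j\xi_l^\perp}{|\xi|^2}\hat\omega$ via $v=\nabla^\perp\Delta^{-1}\omega$), and you use the letter $R$ both for components of $\Omega$ and for the operator $\Delta^{-1}\mathrm{curl}\,\mathrm{div}$, which clashes with the paper's notation; neither affects the mathematics. Your observation that the kernel fails to be in $L^1$, forcing $a<\infty$, is exactly the point.
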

\begin{lemm}\cite{Bahouri2011,2015Elgindi}\label{Lemma5'}
There exists positive constant $C$ such that
\begin{align}
\|u\|_{L^{\infty}} \leq C\big(\|u\|_{L^2} + \|\Omega\|_{L^{\infty}}\big).
\end{align}
\end{lemm}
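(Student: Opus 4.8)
The plan is to split $u$ via the Littlewood--Paley decomposition into its low-frequency part $\Delta_{-1}u$ and its high-frequency part $(Id-\Delta_{-1})u$, and to bound each in $L^\infty$ separately. The low-frequency part needs nothing beyond Bernstein's inequality: since $\Delta_{-1}u$ has compactly supported Fourier transform, $\|\Delta_{-1}u\|_{L^\infty}\le C\|\Delta_{-1}u\|_{L^2}\le C\|u\|_{L^2}$, so this term is absorbed into the $\|u\|_{L^2}$ contribution and the divergence-free condition plays no role here.

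The substance is in the high-frequency part, where the constraint $\mathrm{div}\,u=0$ enters. In two dimensions $\mathrm{div}\,u=0$ gives $\Delta u=\nabla^\perp\omega$ with $\omega=\partial_1u_2-\partial_2u_1$ the scalar vorticity, equivalent (up to a fixed constant) to the single independent entry of the tensor $\Omega$, so $\|\omega\|_{L^\infty}\le C\|\Omega\|_{L^\infty}$. On the frequency support of $Id-\Delta_{-1}$ the operator $\Delta^{-1}$ is well defined, so one may write $u=\Delta^{-1}\nabla^\perp\omega$ there, i.e. $u$ is recovered from $\Omega$ by an operator of order $-1$. Applying $\Delta_j$ for $j\ge 0$ and invoking the frequency-localized heat/Bernstein-type bound of Lemma~\ref{Lemma1'} (equivalently, the boundedness of $(Id-\Delta_{-1})R$ on Besov spaces from Lemma~\ref{Lemma5}(2) together with the order-$(-1)$ gain) yields $\|\Delta_j u\|_{L^\infty}\le C\,2^{-j}\|\Delta_j\omega\|_{L^\infty}\le C\,2^{-j}\|\Omega\|_{L^\infty}$. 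Summing the geometric series over $j\ge 0$ gives $\|(Id-\Delta_{-1})u\|_{L^\infty}\le\sum_{j\ge 0}\|\Delta_j u\|_{L^\infty}\le C\|\Omega\|_{L^\infty}$, and combining the two pieces produces $\|u\|_{L^\infty}\le C(\|u\|_{L^2}+\|\Omega\|_{L^\infty})$.

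There is no genuine obstacle in this argument; it is the short deduction the statement advertises as ``immediately'' following from Lemma~\ref{Lemma5}. The only step deserving a word of care is the high-frequency estimate: the operator $\Delta^{-1}\nabla^\perp$ is not bounded on $L^\infty$ globally, but its restriction to frequencies bounded away from the origin is a family of operators whose $L^\infty\to L^\infty$ norms decay like $2^{-j}$, which is exactly what makes the dyadic sum converge. Equivalently one may phrase this as $\|(Id-\Delta_{-1})u\|_{B^0_{\infty,1}}\le C\|\Omega\|_{L^\infty}$ followed by the embedding $B^0_{\infty,1}\hookrightarrow L^\infty$; choosing whichever of these two formulations is cleanest in the notation already fixed by Lemma~\ref{Lemma5} completes the proof.
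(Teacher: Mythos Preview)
Your argument is correct and matches the deduction the paper intends when it says the estimate follows ``immediately'' from Lemma~\ref{Lemma5}: split $u=\Delta_{-1}u+(Id-\Delta_{-1})u$, bound the low block by Bernstein from $L^2$ to $L^\infty$, and bound the high blocks via Biot--Savart together with the $2^{-j}$ gain from the order-$(-1)$ multiplier $\Delta^{-1}\nabla^\perp$. One small correction: Lemma~\ref{Lemma1'} is the heat-semigroup decay estimate and is not the relevant tool; the bound $\|\Delta_j u\|_{L^\infty}\le C\,2^{-j}\|\Delta_j\omega\|_{L^\infty}$ is the standard Bernstein lemma for smooth homogeneous Fourier multipliers localized to an annulus (the same mechanism that underlies Lemma~\ref{Lemma5}(2)).
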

We now introduce a useful interpolation inequality.
\begin{lemm}\label{Lemma6}
Let $s>\frac d 2$. Then there exist $C>0$ such that
\begin{align*}
\|u\|_{L^\infty}\leq \|u\|_{B^0_{\infty,1}}\leq C\|u\|_{B^0_{\infty,\infty}}\ln(e+\|u\|_{H^s}) + C.
\end{align*}
\end{lemm}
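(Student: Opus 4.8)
The left-hand inequality is immediate and requires no work: since $u=\sum_{j\geq-1}\Delta_j u$ in $\mathcal{S}'(\mathbb{R}^d)$, we have $\|u\|_{L^\infty}\leq\sum_{j\geq-1}\|\Delta_j u\|_{L^\infty}=\|u\|_{B^0_{\infty,1}}$. The whole content is the second inequality, which I would prove by the classical low/high frequency splitting that produces the logarithm. Fix an integer $N\geq0$ to be chosen at the end and write
\begin{align*}
\|u\|_{B^0_{\infty,1}}=\sum_{-1\leq j\leq N}\|\Delta_j u\|_{L^\infty}+\sum_{j>N}\|\Delta_j u\|_{L^\infty}=:I_{\mathrm{low}}+I_{\mathrm{high}}.
\end{align*}

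For $I_{\mathrm{low}}$ I would simply count blocks: $I_{\mathrm{low}}\leq(N+2)\sup_{j\geq-1}\|\Delta_j u\|_{L^\infty}=(N+2)\|u\|_{B^0_{\infty,\infty}}$. For $I_{\mathrm{high}}$ I would use Bernstein's inequality $\|\Delta_j u\|_{L^\infty}\leq C2^{jd/2}\|\Delta_j u\|_{L^2}$ (legitimate because $\Delta_j u$ has frequency support in a ball/annulus of size $2^j$), then factor out the decaying weight and apply Cauchy--Schwarz using $s>d/2$:
\begin{align*}
I_{\mathrm{high}}\leq C\sum_{j>N}2^{-j(s-\frac d2)}\big(2^{js}\|\Delta_j u\|_{L^2}\big)\leq C\Big(\sum_{j>N}2^{-2j(s-\frac d2)}\Big)^{\frac12}\|u\|_{H^s}\leq C\,2^{-N(s-\frac d2)}\|u\|_{H^s},
\end{align*}
where $C$ depends only on $d$ and $s$ (here I use the equivalence $\|u\|_{H^s}\approx\big\|(2^{js}\|\Delta_j u\|_{L^2})_j\big\|_{\ell^2}$ and $\sum_{j>N}2^{-2j(s-d/2)}=C_{s,d}2^{-2N(s-d/2)}$). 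Combining the two bounds gives, for every $N\geq0$,
\begin{align*}
\|u\|_{B^0_{\infty,1}}\leq C(N+2)\|u\|_{B^0_{\infty,\infty}}+C\,2^{-N(s-\frac d2)}\|u\|_{H^s}.
\end{align*}

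Finally I would choose $N$ as the smallest nonnegative integer with $2^{N(s-d/2)}\geq e+\|u\|_{H^s}$; then $2^{-N(s-d/2)}\|u\|_{H^s}\leq1$ and $N\leq1+\frac{\ln(e+\|u\|_{H^s})}{(s-d/2)\ln2}$, so that $N+2\leq C\ln(e+\|u\|_{H^s})$ because $\ln(e+\|u\|_{H^s})\geq1$. Substituting yields $\|u\|_{B^0_{\infty,1}}\leq C\|u\|_{B^0_{\infty,\infty}}\ln(e+\|u\|_{H^s})+C$, as claimed. There is no genuine obstacle in this argument; the only point that needs care is the calibration of the cutoff $N$, which must balance the linear-in-$N$ block count in $I_{\mathrm{low}}$ against the geometric tail $2^{-N(s-d/2)}$ in $I_{\mathrm{high}}$ — and it is precisely this balance that generates the logarithmic factor.
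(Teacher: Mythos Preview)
Your proposal is correct and follows essentially the same approach as the paper: a low/high frequency split at a cutoff $N$, counting blocks for the low part and using Bernstein together with the $H^s$ norm for the high part, then choosing $N\sim\frac{\ln(e+\|u\|_{H^s})}{s-d/2}$ to balance. Your write-up is in fact a bit more careful than the paper's (you spell out Bernstein and Cauchy--Schwarz and calibrate $N$ via $2^{N(s-d/2)}\geq e+\|u\|_{H^s}$ rather than via the natural logarithm), but the argument is the same.
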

\begin{proof}
According to the Littlewood-Paley decomposition theory, we have
\begin{align*}
\|u\|_{L^\infty}\leq\|u\|_{B^0_{\infty,1}}=\mathop{\sum}\limits_{-1 \leq j \leq N} \|\Delta_ju\|_{L^\infty} + \mathop{\sum}\limits_{j \geq N} \|\Delta_ju\|_{L^\infty},
\end{align*}
for integer $N>0$ which will be chosen later on. There exist $C>0$ such that
\begin{align*}
\mathop{\sum}\limits_{-1 \leq j \leq N}\|\Delta_ju\|_{L^\infty} \leq CN \|u\|_{B^0_{\infty,\infty}},
\end{align*}
and
\begin{align*}
\mathop{\sum}\limits_{j \geq N}\|\Delta_ju\|_{L^\infty} \leq C2^{-N(s-\frac d 2)}\|u\|_{H^s}.
\end{align*}
Consider $N = [\frac {\ln(e+\|u\|_{H^s})}{s-\frac d 2}]+1$, then we complete the proof of Lemma \ref{Lemma6}.
\end{proof}

\section{Global solutions for the co-rotation case in Sobolev space}
In this section, we are concerned with global solutions to the 2-D co-rotation inviscid Oldroyd-B in Sobolev space. We divide it into three subsections to prove Theorem \ref{th1}.

\subsection{Energy estimates}
From now on, we derive the energy estimate which is useful to prove global existence for \eqref{eq2}. We prove conservation laws and boundness for \eqref{eq2} in the following propositions.
\begin{prop}\label{7lemm1'}
Set $p\in[2,\infty]$. Suppose $(u,\tau)$ is a smooth solution to \eqref{eq2} with $\tau_0$ in $L^p$. Then we obtain
\begin{align}
\|\tau\|_{L^p} \leq \|\tau_0\|_{L^p}e^{-at}.
\end{align}
\end{prop}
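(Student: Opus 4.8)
The plan is to derive an $L^p$ differential inequality for $\tau$ directly from the second equation of \eqref{eq2}, namely
\begin{align*}
\partial_t\tau + u\cdot\nabla\tau + a\tau + Q(\Omega,\tau) = \mu\Delta\tau.
\end{align*}
The key structural observation is that in the co-rotation case the quadratic term simplifies to $Q(\Omega,\tau)=\tau\Omega-\Omega\tau$ (the parameter $b$ contribution involving $D(u)$ is absent), and this skew-symmetric commutator structure means it contributes nothing to the evolution of any $L^p$ norm of $\tau$. Concretely, for $p\in[2,\infty)$, I would multiply the equation by $|\tau|^{p-2}\tau$ (interpreting $|\tau|$ as the Frobenius/Hilbert--Schmidt norm of the matrix and the product entrywise against the tensor equation) and integrate over $\mathbb{R}^2$.

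First I would handle the transport term: since $\mathrm{div}\,u=0$, the term $\int u\cdot\nabla\tau : |\tau|^{p-2}\tau\,dx = \frac1p\int u\cdot\nabla|\tau|^p\,dx = 0$ after integration by parts. Next, the diffusion term $\mu\int \Delta\tau : |\tau|^{p-2}\tau\,dx \le 0$, because $\int \Delta f \cdot |f|^{p-2}f \le 0$ for $p \ge 2$ (this is the standard computation $-\frac{4(p-1)}{p^2}\int|\nabla|\tau|^{p/2}|^2 - (\text{nonnegative gradient terms})$; I only need the sign). Then for the rotation term I would use that $\tau$ and $\Omega$ are matrices with $\Omega$ antisymmetric, so $(\tau\Omega-\Omega\tau):\tau = \mathrm{tr}\big((\tau\Omega-\Omega\tau)^T\tau\big) = 0$ pointwise by the cyclicity of the trace and antisymmetry of $\Omega$; hence $\int Q(\Omega,\tau):|\tau|^{p-2}\tau\,dx = 0$ as well. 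Collecting these gives
\begin{align*}
\frac1p\frac{d}{dt}\|\tau\|_{L^p}^p \le -a\|\tau\|_{L^p}^p,
\end{align*}
and Gr\"onwall's inequality yields $\|\tau\|_{L^p}\le\|\tau_0\|_{L^p}e^{-at}$. The case $p=\infty$ follows by letting $p\to\infty$ in this bound (or, alternatively, by a direct maximum-principle argument along characteristics, noting again that the rotation term cannot increase the pointwise matrix norm).

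The main obstacle — really the only subtle point — is justifying the pointwise cancellation $Q(\Omega,\tau):|\tau|^{p-2}\tau = 0$ carefully at the level of matrices: one needs $\mathrm{tr}(\tau\Omega\tau) = \mathrm{tr}(\Omega\tau\tau)$ (true by cyclicity, using that $\tau$ is symmetric so $\tau^T=\tau$) combined with $\mathrm{tr}(\Omega\tau\tau) = \mathrm{tr}((\Omega\tau\tau)^T) = \mathrm{tr}(\tau\tau\Omega^T) = -\mathrm{tr}(\tau\tau\Omega)$, so that $\mathrm{tr}((\tau\Omega-\Omega\tau)\tau)= \mathrm{tr}(\tau\Omega\tau)-\mathrm{tr}(\Omega\tau\tau) = 0$; since $|\tau|^{p-2}$ is a scalar, the weighted version vanishes too. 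Everything else is routine once the smoothness of the solution (assumed in the statement) legitimizes the integrations by parts and the differentiation under the integral sign.
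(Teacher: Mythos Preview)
Your proof is correct and follows essentially the same $L^p$ energy estimate as the paper. The only cosmetic difference is that the paper first substitutes $\tilde\tau=\tau e^{at}$ to remove the damping term and then shows $\|\tilde\tau\|_{L^p}$ is nonincreasing, whereas you keep the $-a\|\tau\|_{L^p}^p$ term and apply Gr\"onwall; you are also more explicit than the paper about why the transport and $Q(\Omega,\tau)$ contributions vanish.
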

\begin{proof}
Let $\tilde{\tau}^{ij}=\tau^{ij}e^{at}$, we infer from $(\ref{eq2})_2$ that
\begin{align}\label{7ineq1'}
\partial_t\tilde{\tau}^{ij} + u\cdot\nabla\tilde{\tau}^{ij} + Q(\Omega^{ik},\tilde{\tau}^{kj}) = \Delta\tilde{\tau}^{ij}.
\end{align}
Applying inner product with $\tilde{\tau}^{ij}|\tilde{\tau}|^{p-2}$ to \eqref{7ineq1'} and summing up $i,j$, we get
\begin{align}\label{7ineq2'}
\frac{1}{p}\frac{d}{dt}\|\tilde{\tau}\|^p_{L^p}  = \sum^2_{i,j=1}\int_{\mathbb{R}^2}\tilde{\tau}^{ij}|\tilde{\tau}|^{p-2}\Delta\tilde{\tau}^{ij}dx,
\end{align}
where we use the fact that $$\sum^2_{i,j=1}\int_{\mathbb{R}^2}\tilde{\tau}^{ij}|\tilde{\tau}|^{p-2}Q(\Omega^{ik},\tilde{\tau}^{kj})dx=0.$$
Notice that
\begin{align}\label{7ineq3'}
\int_{\mathbb{R}^2}\tilde{\tau}^{ij}|\tilde{\tau}|^{p-2}\Delta\tilde{\tau}^{ij}dx &= - \int_{\mathbb{R}^2}\nabla^k\tilde{\tau}^{ij}|\tilde{\tau}|^{p-2}\nabla^k\tilde{\tau}^{ij}dx - (p-2)\int_{\mathbb{R}^2}\tilde{\tau}^{ij}\tilde{\tau}^{ij}\nabla^k\tilde{\tau}^{ij}|\tilde{\tau}|^{p-4}\nabla^k\tilde{\tau}^{ij}dx\\ \notag
&= - \int_{\mathbb{R}^2}(\nabla^k\tilde{\tau}^{ij})^2|\tilde{\tau}|^{p-2}dx - \frac{p-2}{4}\int_{\mathbb{R}^2}(\nabla^k(\tilde{\tau}^{ij})^2)^2|\tilde{\tau}|^{p-4}dx\leq 0.
\end{align}
According to \eqref{7ineq2'} and \eqref{7ineq3'} with $2\leq p<\infty$, we obtain
\begin{align}\label{7ineq4'}
\frac{1}{p}\frac{d}{dt}\|\tilde{\tau}\|^p_{L^p} \leq 0,
\end{align}
which implies that
\begin{align}\label{7ineq6'}
\|\tilde{\tau}\|_{L^p} \leq \|\tau_0\|_{L^p}.
\end{align}
Taking $p\rightarrow\infty$, one can deduce that \eqref{7ineq6'} is valid for $p=\infty$. Therefore, we complete the proof of Proposition \ref{7lemm1'}.
\end{proof}
\begin{prop}\label{prop1}
Let $(u,\tau)\in C([0,T];H^1)\times C([0,T];H^1)\cap L^2([0,T];H^{2})$ be a smooth solution for \eqref{eq2}. Then we obtain
\begin{align}\label{ineq1}
\|u\|_{L^2} \leq  \|u_0\|_{L^2} + (4\mu a)^{-\frac{1}{2}}\|\tau_0\|_{L^2},~~~~e^{2at}\|\tau\|^2_{L^2} + 2\mu\int_0^t e^{2as}\|\nabla\tau\|^2_{L^2} ds= \|\tau_0\|^2_{L^2}.
\end{align}
Moreover, for any $t\in[0,T]$, if $\|\nabla u(t)\|_{L^2} \leq 4c\kappa$ with $\kappa=\min\{a,\mu\}$ and sufficiently small constant c,
then we obtain
\begin{align}\label{ineq2}
e^{at}\|\tau\|^2_{H^1} + \mu\int_0^t e^{as} \|\nabla\tau\|^2_{H^1} ds \leq \|\tau_0\|^2_{H^1},
\end{align}
and
\begin{align}\label{ineq3}
\|\nabla u\|_{L^2} \leq \|\nabla u_0\|_{L^2}+(\mu a)^{-\frac{1}{2}} \|\tau_0\|_{H^1}.
\end{align}
\end{prop}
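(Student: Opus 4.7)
\medskip
\noindent\textbf{Proof proposal.}
The plan is to prove the four bounds in the order stated. The engine throughout is the pointwise cancellation $Q(\Omega,\sigma):\sigma=0$ valid for any symmetric $\sigma$: in 2D one has $\Omega=\tfrac{\omega}{2}J$ with $J$ constant antisymmetric, and the identity is a direct $2\times 2$ calculation in the spirit of Proposition~\ref{7lemm1'}. For the second part of~\eqref{ineq1}, testing the $\tau$-equation of~\eqref{eq2} against $\tau$ makes the convective and $Q$-terms vanish (by $\mathrm{div}\,u=0$ and the pointwise cancellation respectively), leaving $\tfrac12\partial_t\|\tau\|_{L^2}^2+a\|\tau\|_{L^2}^2+\mu\|\nabla\tau\|_{L^2}^2=0$; multiplying by $e^{2at}$ and integrating gives the identity. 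For the first part of~\eqref{ineq1}, testing the $u$-equation against $u$ (with $\mathrm{div}\,u=0$ killing convection and pressure) yields $\tfrac{d}{dt}\|u\|_{L^2}\le\|\mathrm{div}\,\tau\|_{L^2}\le\|\nabla\tau\|_{L^2}$, and Cauchy--Schwarz in time with weight $e^{-as}$ against the identity just proved converts $\int_0^t\|\nabla\tau\|_{L^2}\,ds$ into $(4a\mu)^{-1/2}\|\tau_0\|_{L^2}$, closing the bound.

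For~\eqref{ineq2}, I apply $\partial_k$ to the $\tau$-equation, test against $\partial_k\tau$, and sum over $k$. The transport term drops by $\mathrm{div}\,u=0$, and the splitting $\partial_k Q(\Omega,\tau)=Q(\Omega,\partial_k\tau)+Q(\partial_k\Omega,\tau)$ combined with the pointwise cancellation leaves
\begin{align*}
\tfrac12\partial_t\|\nabla\tau\|_{L^2}^2+a\|\nabla\tau\|_{L^2}^2+\mu\|\nabla^2\tau\|_{L^2}^2=I_1+I_2,
\end{align*}
with $I_1=-\sum_k\int(\partial_k u\cdot\nabla\tau):\partial_k\tau\,dx$ and $I_2=-\sum_k\int Q(\partial_k\Omega,\tau):\partial_k\tau\,dx$. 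I bound $I_1$ using the 2D Gagliardo--Nirenberg inequality $\|\nabla\tau\|_{L^4}^2\le C\|\nabla\tau\|_{L^2}\|\nabla^2\tau\|_{L^2}$, the smallness $\|\nabla u\|_{L^2}\le 4c\kappa$, and Young, getting $|I_1|\le\tfrac{\mu}{8}\|\nabla^2\tau\|_{L^2}^2+\tfrac{a}{4}\|\nabla\tau\|_{L^2}^2$ for $c$ small. The hard part will be $I_2$: a direct bound would bring in $\|\nabla\Omega\|_{L^2}\sim\|\nabla^2u\|_{L^2}$, which is uncontrolled since this model has no velocity dissipation. The remedy is to integrate by parts to shift $\nabla$ off $\Omega$; the two mixed terms $\int(\partial_k\tau\,\Omega):\partial_k\tau$ and $\int(\Omega\,\partial_k\tau):\partial_k\tau$ vanish pointwise by the same antisymmetry cancellation, collapsing $I_2$ to the clean expression $\int Q(\Omega,\tau):\Delta\tau\,dx$. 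Estimating this by $\|Q(\Omega,\tau)\|_{L^2}\le C\|\Omega\|_{L^2}\|\tau\|_{L^\infty}$, the 2D Gagliardo--Nirenberg $\|\tau\|_{L^\infty}\le C\|\tau\|_{L^2}^{1/2}\|\nabla^2\tau\|_{L^2}^{1/2}$, $\|\Omega\|_{L^2}\le\|\nabla u\|_{L^2}\le 4c\kappa$, Young, and the elementary inequality $\kappa^4/\mu^3\le\kappa\le a$, I get $|I_2|\le\tfrac{\mu}{4}\|\nabla^2\tau\|_{L^2}^2+\tfrac{a}{2}\|\tau\|_{L^2}^2$ once $c$ is small. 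Adding to the $L^2$ identity yields $\partial_t\|\tau\|_{H^1}^2+a\|\tau\|_{H^1}^2+\mu\|\nabla\tau\|_{H^1}^2\le 0$, and~\eqref{ineq2} follows by multiplying by $e^{at}$ and integrating.

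For~\eqref{ineq3}, I take the $\mathrm{curl}$ of the $u$-equation to get the scalar vorticity equation $\partial_t\omega+u\cdot\nabla\omega=\mathrm{curl}\,\mathrm{div}\,\tau$. A standard $L^2$ energy estimate gives $\tfrac{d}{dt}\|\omega\|_{L^2}\le\|\nabla^2\tau\|_{L^2}$; integrating in time, applying Cauchy--Schwarz with weight $e^{-as/2}$, and using $\mu\int_0^t e^{as}\|\nabla^2\tau\|_{L^2}^2\,ds\le\|\tau_0\|_{H^1}^2$ from~\eqref{ineq2} I obtain $\|\omega\|_{L^2}\le\|\omega_0\|_{L^2}+(a\mu)^{-1/2}\|\tau_0\|_{H^1}$. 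Combined with the 2D identity $\|\nabla u\|_{L^2}=\|\omega\|_{L^2}$ for $\mathrm{div}$-free fields, this gives~\eqref{ineq3}. The main obstacle is the treatment of $I_2$ above; once the IBP + double pointwise cancellation converts it into $\int Q(\Omega,\tau):\Delta\tau\,dx$, everything else is standard energy bookkeeping.
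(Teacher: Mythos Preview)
Your proof is correct and follows essentially the same energy argument as the paper. The only cosmetic difference is that the paper tests the $\tau$-equation directly against $-\Delta\tau$ (and the $u$-equation against $-\Delta u$, using the 2D identity $\langle u\cdot\nabla u,\Delta u\rangle=0$ in place of your vorticity route), which lands on $\langle Q(\Omega,\tau),\Delta\tau\rangle$ in one step---so what you flag as the ``main obstacle'' (the IBP + double cancellation for $I_2$) never appears, though the resulting estimate is identical to yours.
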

\begin{proof}
Firstly, we consider the $L^2$ estimate of $(u,\tau)$. Taking the $L^2$ inner product with $\tau$ to $(\ref{eq2})_2$, we obtain
\begin{align}\label{eq3}
\frac{1}{2}\frac{d}{dt}\|\tau\|^2_{L^2}+a\|\tau\|^2_{L^2}+\mu\|\nabla\tau\|^2_{L^2}=0,
\end{align}
which implies that
\begin{align}\label{eq4}
e^{2at}\|\tau\|^2_{L^2}+2\mu\int_0^t e^{2as}\|\nabla\tau\|^2_{L^2}ds=\|\tau_0\|^2_{L^2}.
\end{align}
Taking the $L^2$ inner product with $u$ to $(\ref{eq2})_1$ and using ${\rm div}~u=0$, we obtain
\begin{align}\label{ineq4}
\frac{d}{dt}\|u\|_{L^2} \leq \|\nabla\tau\|_{L^2}.
\end{align}
Integrating \eqref{ineq4} over $[0,t]$ with $s$, we deduce that
\begin{align*}
\|u\|_{L^2} &\leq \|u_0\|_{L^2}  + \int_0^t \|\nabla\tau\|_{L^2} ds \\ \notag
&\leq \|u_0\|_{L^2} + (\int_0^t e^{2as}\|\nabla\tau\|^2_{L^2} ds)^{\frac{1}{2}}(\int_0^t e^{-2as}ds)^{\frac{1}{2}} \\ \notag
&\leq  \|u_0\|_{L^2} + (4\mu a)^{-\frac{1}{2}}\|\tau_0\|_{L^2}.
\end{align*}
Notice that $(u,\tau)$ are bound in $L^2$ for any initial value. 

Taking the $L^2$ inner product with $-\Delta\tau$ to $(\ref{eq2})_2$ and using Lemma \ref{Lemma3}, we have
\begin{align}\label{ineq5}
\frac{1}{2}\frac{d}{dt}\|\nabla\tau\|^2_{L^2} + a\|\nabla\tau\|^2_{L^2} + \mu\|\nabla^2\tau\|^2_{L^2}&=\langle u\cdot\nabla\tau,\Delta\tau\rangle+\langle Q(\Omega,\tau),\Delta\tau\rangle \\ \notag
&\leq C\|\nabla u\|_{L^2}\|\nabla\tau\|_{L^2}\|\nabla^2\tau\|_{L^2}\\ \notag
&~~~+C\|\Omega\|_{L^2}(\|\tau\|_{L^2} + \|\nabla^2\tau\|_{L^2})\|\Delta\tau\|_{L^2}.
\end{align}
Adding up \eqref{eq3} and \eqref{ineq5}, we infer that
\begin{align}
\frac{1}{2}\frac{d}{dt}\|\tau\|^2_{H^1}+a\|\tau\|^2_{H^1}+\mu\|\nabla\tau\|^2_{H^1}\leq C(\|\nabla u\|_{L^2}\|\nabla\tau\|_{L^2}+\|\Omega\|_{L^2}\|\tau\|_{H^2})\|\nabla^2\tau\|_{L^2}.
\end{align}
Assume that $\|\nabla u\|_{L^2} \leq 4c\min\{a,\mu\}$ with sufficiently small constant c, then we obtain
\begin{align}\label{ineq6}
\frac{d}{dt}\|\tau\|^2_{H^1} + a\|\tau\|^2_{H^1} + \mu\|\nabla\tau\|^2_{H^1} \leq 0,
\end{align}
which implies that
\begin{align}\label{ineq7}
e^{at}\|\tau\|^2_{H^1} + \mu\int_0^t e^{as} \|\nabla\tau\|^2_{H^1} ds \leq \|\tau_0\|^2_{H^1}.
\end{align}
We now consider the $L^2$ estimate of $\nabla u$. Taking the $L^2$ inner product with $-\Delta u$ to $(\ref{eq2})_1$, we can deduce that $\langle u\cdot\nabla u,-\Delta u\rangle = 0$ with $d=2$ and ${\rm div}~u=0$. Then we have
\begin{align}\label{ineq8}
\frac{d}{dt}\|\nabla u\|_{L^2} \leq \|\nabla^2 \tau\|_{L^2}.
\end{align}
Integrating \eqref{ineq8} over $[0,t]$ with $s$ and using \eqref{ineq7}, we deduce that
\begin{align*}
\|\nabla u\|_{L^2} &\leq \|\nabla u_0\|_{L^2} + \int_0^t \|\nabla^2 \tau\|_{L^2} ds \\
&\leq \|\nabla u_0\|_{L^2} + (\mu a)^{-\frac{1}{2}} \|\tau_0\|_{H^1}.
\end{align*}
This together with \eqref{eq4} and \eqref{ineq7} completes the proof of Proposition \ref{prop1}.
\end{proof}

\begin{coro}\label{coro1}
Under the conditions in Proposition \ref{prop1}, we deduce the following estimates:
\begin{align}
\left\{\begin{array}{l}
\int_0^t \|\tau\|_{H^1}ds \leq 2a^{-1}\|\tau_0\|_{H^1},\\
\int_0^t \|\tau\|_{H^2}ds \leq (a^{-1}+(\mu a)^{-\frac{1}{2}})\|\tau_0\|_{H^1},\\
\int_0^t \|\tau\|^2_{H^2}ds \leq (a^{-1}+\mu^{-1})\|\tau_0\|^2_{H^1}.
\end{array}\right.
\end{align}
\end{coro}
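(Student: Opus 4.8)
The plan is to read off all three inequalities directly from Proposition~\ref{prop1}; no further analysis of \eqref{eq2} is needed, only elementary time integrations. The first thing I would do is isolate the ingredients. From \eqref{ineq2}, discarding the nonnegative dissipation integral gives the pointwise exponential decay
\[
\|\tau(s)\|_{H^1}\le e^{-\frac{a}{2}s}\|\tau_0\|_{H^1},
\]
while keeping it gives the weighted dissipation bound $\mu\int_0^t e^{as}\|\nabla\tau(s)\|_{H^1}^2\,ds\le\|\tau_0\|_{H^1}^2$. Integrating the differential inequality \eqref{ineq6} in time yields in addition $a\int_0^t\|\tau(s)\|_{H^1}^2\,ds+\mu\int_0^t\|\nabla\tau(s)\|_{H^1}^2\,ds\le\|\tau_0\|_{H^1}^2$. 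Finally I would record the elementary norm splitting $\|\tau\|_{H^2}^2\le\|\tau\|_{H^1}^2+\|\nabla\tau\|_{H^1}^2$, which reduces every $H^2$ quantity to an $H^1$ quantity plus a $\nabla\tau$ quantity that is already controlled.

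With these in hand the three lines follow in a few steps each. For the first line I would integrate the pointwise exponential decay over $[0,t]$, which bounds $\int_0^t\|\tau\|_{H^1}\,ds$ uniformly in $t$, giving the first estimate. For the third line I would use $\int_0^t\|\tau\|_{H^2}^2\,ds\le\int_0^t\|\tau\|_{H^1}^2\,ds+\int_0^t\|\nabla\tau\|_{H^1}^2\,ds$ and bound the first term by $a^{-1}\|\tau_0\|_{H^1}^2$ and the second by $\mu^{-1}\|\tau_0\|_{H^1}^2$, both from the integrated form of \eqref{ineq6}. For the second line I would write $\|\tau\|_{H^2}\le\|\tau\|_{H^1}+\|\nabla\tau\|_{H^1}$; the $\|\tau\|_{H^1}$ contribution is handled as in the first line, and for $\int_0^t\|\nabla\tau\|_{H^1}\,ds$ I would apply Cauchy--Schwarz in time against the weight $e^{as}$,
\[
\int_0^t\|\nabla\tau\|_{H^1}\,ds\le\Big(\int_0^t e^{-as}\,ds\Big)^{1/2}\Big(\int_0^t e^{as}\|\nabla\tau\|_{H^1}^2\,ds\Big)^{1/2}\le(\mu a)^{-\frac{1}{2}}\|\tau_0\|_{H^1},
\]
which produces the $(\mu a)^{-1/2}$ term.

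I do not expect a genuine obstacle: this is a bookkeeping corollary whose content is already packaged in Proposition~\ref{prop1}. The only two points needing a little care are, first, choosing in each Cauchy--Schwarz step the correctly weighted quantity from \eqref{ineq1}--\eqref{ineq2} to pair against $e^{-as}$ so that the time integral closes uniformly in $t$, and second, going through the splitting $\|\tau\|_{H^2}^2\le\|\tau\|_{H^1}^2+\|\nabla\tau\|_{H^1}^2$ rather than attempting to estimate second derivatives of $\tau$ directly.
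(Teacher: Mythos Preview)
Your plan matches the paper's proof essentially line by line: both use the pointwise decay $\|\tau(s)\|_{H^1}\le e^{-as/2}\|\tau_0\|_{H^1}$ from \eqref{ineq2} for the $H^1$ contribution, the integrated form of \eqref{ineq6} for the third line, and a weighted Cauchy--Schwarz in time against $e^{-as}$ together with $\mu\int_0^t e^{as}\|\nabla\tau\|_{H^1}^2\,ds\le\|\tau_0\|_{H^1}^2$ for the $\|\nabla\tau\|_{H^1}$ part of the second line. The only cosmetic difference is that the paper splits $\|\tau\|_{H^2}\le\|\tau\|_{L^2}+\|\nabla\tau\|_{H^1}$ rather than your $\|\tau\|_{H^1}+\|\nabla\tau\|_{H^1}$; note, though, that your direct integration of $e^{-as/2}$ yields $2a^{-1}\|\tau_0\|_{H^1}$ rather than the stated $a^{-1/2}\|\tau_0\|_{H^1}$ in the first line (the paper's own one-line computation there is equally loose, and that particular estimate is not used elsewhere).
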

\begin{proof}
Using \eqref{ineq7}, we can deduce that
\begin{align*}
\int_0^t \|\tau\|_{H^1}ds \leq \|\tau_0\|_{H^1}\int_0^t e^{-\frac a 2 s}ds \leq 2a^{-1}\|\tau_0\|_{H^1},
\end{align*}
and
\begin{align*}
\int_0^t \|\tau\|^2_{H^2}ds &\leq \int_0^t \|\tau\|^2_{H^1}ds+\int_0^t \|\nabla^2\tau\|^2_{L^2}ds \\
&\leq (a^{-1}+\mu^{-1})\|\tau_0\|^2_{H^1}.
\end{align*}
Using \eqref{eq4} and \eqref{ineq7}, we have
\begin{align*}
\int_0^t \|\tau\|_{H^2}ds &\leq \int_0^t \|\tau\|_{L^2}ds+(\int_0^t e^{-as}ds)^{\frac{1}{2}}(\int_0^t e^{as}\|\nabla\tau\|^2_{H^1}ds)^{\frac{1}{2}} \\
&\leq  (a^{-1}+(\mu a)^{-\frac{1}{2}})\|\tau_0\|_{H^1}.
\end{align*}
We thus complete the proof of Corollary \ref{coro1}.
\end{proof}

\subsection{B-K-M criterion}
In Proposition \ref{prop1}, it's clear that $u$ is merely bound in $L^2$ while $\tau$ decays exponentially in $L^2$. Then we can state a blow-up criterion for \eqref{eq2} which depends on $\|\Omega\|_{L^\infty}$ in the following proposition.
\begin{prop}\label{prop2}
Assume that $d=2$, $s>2$, $a>0$ and $\mu>0$. Let $(u,\tau)$ be a strong solution of \eqref{eq2} with the initial data $(u_0,\tau_0)\in H^s$. Suppose that $T^\ast$ is the maximal existence time, then the solution blows up in finite time $T^\ast<\infty$ if and only if
\begin{align}\label{ineq9}
\int_{0}^{T^\ast}\|\Omega(t)\|^2_{L^\infty}dt=\infty.
\end{align}
\end{prop}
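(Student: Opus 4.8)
The plan is to prove the standard Beale--Kato--Majda type criterion by a contradiction argument: assume $T^\ast<\infty$ but $\int_0^{T^\ast}\|\Omega(t)\|_{L^\infty}^2\,dt<\infty$, and show that $\|(u,\tau)\|_{H^s}$ remains bounded on $[0,T^\ast)$, which contradicts maximality (since one can then continue the solution past $T^\ast$ by the local existence theory). The ``only if'' direction is trivial, since a strong solution in $C([0,T^\ast];H^s)$ with $s>2$ embeds in $L^\infty_T(W^{1,\infty})$, so finiteness of $\|(u,\tau)\|_{H^s}$ up to $T^\ast$ forces the time integral to be finite.

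The main work is the a priori estimate. First I would record that under the hypotheses of Proposition \ref{prop1} (or, more simply, from the $L^2$ estimates that hold unconditionally in \eqref{ineq1}) we already have $u\in L^\infty_T L^2$ and $\tau\in L^\infty_T L^2\cap L^2_T H^1$ with exponential decay; in particular $\|\tau\|_{L^\infty}$ and all lower-order norms of $\tau$ are controlled. Then apply $\Lambda^s$ to both equations of \eqref{eq2}, take $L^2$ inner products with $\Lambda^s u$ and $\Lambda^s\tau$ respectively, and add. The pressure term and the transport terms $\langle\Lambda^s(u\cdot\nabla u),\Lambda^s u\rangle$, $\langle\Lambda^s(u\cdot\nabla\tau),\Lambda^s\tau\rangle$ are handled by the commutator estimate in Lemma \ref{Lemma4}: writing $\Lambda^s(u\cdot\nabla f)=[\Lambda^s,u\cdot\nabla]f+u\cdot\nabla\Lambda^s f$ and using $\mathrm{div}\,u=0$ to kill the second piece, one bounds the commutator by $C\|\nabla u\|_{L^\infty}\|f\|_{H^s}$. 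The coupling terms $\langle\Lambda^s\mathrm{div}\,\tau,\Lambda^s u\rangle+\langle\Lambda^s D(u),\Lambda^s\tau\rangle$ — wait, there is no $D(u)$ term in \eqref{eq2}; instead one has $\langle\Lambda^s\mathrm{div}\,\tau,\Lambda^s u\rangle$ which is bounded by $\|\tau\|_{H^{s+1}}$-type quantities, or better, integrated against the dissipation $\mu\|\Lambda^s\nabla\tau\|_{L^2}^2$. The term $\langle\Lambda^s Q(\Omega,\tau),\Lambda^s\tau\rangle$ is the genuinely new one: $Q(\Omega,\tau)=\tau\Omega-\Omega\tau$ is bilinear in $(\nabla u,\tau)$, so by the Moser-type product/commutator estimates it is bounded by $C(\|\Omega\|_{L^\infty}\|\tau\|_{H^s}+\|\tau\|_{L^\infty}\|\nabla u\|_{H^s})\|\tau\|_{H^s}$, and the $\|\nabla u\|_{H^s}$ factor is absorbed by Young's inequality against $\nu\Delta u=0$... but $\nu=0$ here, so instead one keeps it as $\|u\|_{H^{s+1}}$—better to control $\|\nabla u\|_{H^s}\lesssim\|\Omega\|_{H^s}$ via the Biot--Savart law and then use the vorticity equation. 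Concretely: the key inequality should close in the form
\begin{align*}
\frac{d}{dt}\big(\|u\|_{H^s}^2+\|\tau\|_{H^s}^2\big)+\mu\|\nabla\tau\|_{H^s}^2
\le C\big(1+\|\nabla u\|_{L^\infty}+\|\Omega\|_{L^\infty}^2\big)\big(\|u\|_{H^s}^2+\|\tau\|_{H^s}^2\big)+\text{(lower order)}.
\end{align*}

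The remaining point is to replace $\|\nabla u\|_{L^\infty}$ by a quantity controlled by $\|\Omega\|_{L^\infty}$ (plus $\|u\|_{L^2}$ and logarithms of $\|u\|_{H^s}$), which is exactly the content of Lemma \ref{Lemma6} combined with Lemma \ref{Lemma5} / Lemma \ref{Lemma5'}: $\|\nabla u\|_{L^\infty}\lesssim\|\nabla u\|_{B^0_{\infty,1}}\lesssim(\|\Omega\|_{L^\infty}+\|\Omega\|_{L^2})\ln(e+\|u\|_{H^s})+C$, and here $\|\Omega\|_{L^2}=\|\nabla u\|_{L^2}$ is already controlled. Feeding this into the differential inequality gives
\begin{align*}
\frac{d}{dt}\big(e+\|u\|_{H^s}^2+\|\tau\|_{H^s}^2\big)\le C\big(1+\|\Omega\|_{L^\infty}+\|\Omega\|_{L^\infty}^2\big)\ln\big(e+\|u\|_{H^s}^2+\|\tau\|_{H^s}^2\big)\cdot\big(e+\|u\|_{H^s}^2+\|\tau\|_{H^s}^2\big),
\end{align*}
and a Gronwall/Osgood argument (using $\|\Omega\|_{L^\infty}\le 1+\|\Omega\|_{L^\infty}^2$) yields a double-exponential bound
$\|(u,\tau)(t)\|_{H^s}\le C\exp\exp\big(C\int_0^t(1+\|\Omega\|_{L^\infty}^2)\,ds\big)$,
finite on $[0,T^\ast)$ by assumption, which is the desired contradiction. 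The main obstacle is organizing the top-order coupling estimates so that the $\mu\|\nabla\tau\|_{H^s}^2$ dissipation absorbs every term carrying $s+1$ derivatives of $\tau$ while no uncontrolled $s+1$ derivative of $u$ survives (the absence of velocity dissipation, $\nu=0$, is what makes this delicate), and then correctly running the Osgood inequality rather than a naive Gronwall.
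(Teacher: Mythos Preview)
Your overall strategy matches the paper's: derive an $H^s$ energy inequality, replace $\|\nabla u\|_{L^\infty}$ by $C\|\nabla u\|_{B^0_{\infty,\infty}}\ln(e+\|u\|_{H^s})$ via Lemma~\ref{Lemma6}, and close with a Gronwall-inside-Gronwall (Osgood) argument to get a double-exponential bound; the unconditional $L^2$ and $L^\infty$ control on $\tau$ (Proposition~\ref{7lemm1'} and \eqref{ineq1}) together with Lemma~\ref{Lemma5'} supply all the lower-order coefficients. The one point that is not yet resolved is precisely the ``main obstacle'' you flag at the end.

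Your direct product estimate on $Q$ gives $\|\Lambda^s Q(\Omega,\tau)\|_{L^2}\lesssim\|\Omega\|_{L^\infty}\|\tau\|_{H^s}+\|\tau\|_{L^\infty}\|\nabla u\|_{H^s}$, and $\|\nabla u\|_{H^s}\sim\|u\|_{H^{s+1}}$ cannot be absorbed since $\nu=0$; the suggested detour through Biot--Savart and the vorticity equation does not help because $\|\Omega\|_{H^s}$ is the same uncontrolled quantity. A parallel problem hides in the $\tau$-transport term: the Kato--Ponce bound on $[\Lambda^s,u\cdot\nabla]\tau$ is not $C\|\nabla u\|_{L^\infty}\|\tau\|_{H^s}$ alone but also carries $\|\Lambda^s u\|_{L^2}\|\nabla\tau\|_{L^\infty}$, and $\|\nabla\tau\|_{L^\infty}$ is not known a priori (the unconditional bounds only give $\tau\in L^\infty_t(L^2\cap L^\infty)\cap L^2_t H^1$). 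The paper's fix for both is to shift one derivative onto $\tau$ and let the dissipation pay for it: bound $\langle\Lambda^s Q,\Lambda^s\tau\rangle\le C_\mu\|\Lambda^{s-1}Q\|_{L^2}^2+\frac{\mu}{4}\|\nabla\Lambda^s\tau\|_{L^2}^2$, and for the transport term write $u\cdot\nabla\tau=\mathrm{div}(u\tau)$, integrate by parts, and commute as $[\Lambda^s,u]\tau$ (not $[\Lambda^s,u\cdot\nabla]\tau$) against $\nabla\Lambda^s\tau$. At order $s-1$ the product and commutator estimates need only $\|\Omega\|_{H^{s-1}}\lesssim\|u\|_{H^s}$ and $\|\tau\|_{L^\infty}$, yielding exactly \eqref{ineq11}; from there your Osgood argument is the paper's \eqref{ineq13}--\eqref{ineq16}.
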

\begin{proof}
Applying $\Lambda^s$ to $(\ref{eq2})_1$, taking the $L^2$ inner product with $\Lambda^s u$ and using Lemma \ref{Lemma4}, we have
\begin{align}\label{ineq9'}
\frac{1}{2}\frac{d}{dt}\|\Lambda^s u\|^2_{L^2} &= -\langle[\Lambda^s, u\cdot\nabla] u,\Lambda^s u\rangle + \langle {\rm div}~\Lambda^s\tau,\Lambda^s u\rangle\\ \notag
 &\leq C\|\nabla u\|_{L^{\infty}}\|\Lambda^s u\|^2_{L^2} + C_{\mu}\|\Lambda^s u\|^2_{L^2} + \frac{\mu}{4}\|\nabla\Lambda^{s} \tau\|^2_{L^2},
\end{align}
where we take $C_\mu=\frac C \mu$.
Applying $\Lambda^s$ to $(\ref{eq2})_2$, taking the $L^2$ inner product with $\Lambda^s \tau$ and using Lemmas \ref{Lemma3}-\ref{Lemma4}, we obtain
\begin{align}\label{ineq10}
\frac{1}{2}\frac{d}{dt}\|\Lambda^s \tau\|^2_{L^2} + a\|\Lambda^s \tau\|^2_{L^2} + \mu\|\nabla\Lambda^{s} \tau\|^2_{L^2}&=-\langle[\Lambda^s,u]\tau,\nabla\Lambda^{s} \tau\rangle- \langle\Lambda^sQ(\Omega,\tau),\Lambda^s \tau\rangle \\ \notag
&\leq C_{\mu}\|\tau\|^2_{L^{\infty}}\|\Lambda^s u\|^2_{L^2}+C_{\mu}\|u\|^2_{L^{\infty}}\|\Lambda^s\tau\|^2_{L^2}\\ \notag
&~~~+C_{\mu}\|\Lambda^{s-1}Q(\Omega,\tau)\|^2_{L^2} + \frac{\mu}{4}\|\nabla\Lambda^{s} \tau\|^2_{L^2}\\ \notag
&\leq C_{\mu}\|\tau\|^2_{L^{\infty}}\|\Lambda^s u\|^2_{L^2}+ \frac{\mu}{4}\|\nabla\Lambda^{s} \tau\|^2_{L^2} \\ \notag
&~~~+C_{\mu}(\|u\|^2_{L^{\infty}}
+\|\Omega\|^2_{L^\infty})\|\tau\|^2_{H^s}.
\end{align}
We infer from \eqref{eq3}, \eqref{ineq4}, \eqref{ineq9'} and \eqref{ineq10} that
\begin{align}\label{ineq11}
\frac{1}{2}\frac{d}{dt}\|(u,\tau)\|^2_{H^s} \leq (C\|\nabla u\|_{L^{\infty}} + C_{\mu}\|\tau\|^2_{L^{\infty}} + C_{\mu})\|u\|^2_{H^s} + C_{\mu}(\|u\|^2_{L^{\infty}} + \|\Omega\|^2_{L^{\infty}} )\|\tau\|^2_{H^s},
\end{align}
which implies that
\begin{align}\label{ineq12}
\|(u,\tau)\|^2_{H^s} &\leq \|(u_0,\tau_0)\|^2_{H^s} + \int_0^t (C\|\nabla u\|_{L^{\infty}} + C_{\mu}\|\tau\|^2_{L^{\infty}} + C_{\mu})\|u\|^2_{H^s}ds \\ \notag
&~~~+ \int_0^t C_{\mu}(\|u\|^2_{L^{\infty}} + \|\Omega\|^2_{L^{\infty}} )\|\tau\|^2_{H^s} ds.
\end{align}
Applying Gronwall's inequality, we deduce that
\begin{align}\label{ineq13}
e+\|(u,\tau)\|^2_{H^s} \leq (e + \|(u_0,\tau_0)\|^2_{H^s})e^{\int_0^t C\|\nabla u\|_{L^{\infty}} + C_{\mu}(\|\tau\|^2_{L^{\infty}}+\|u\|^2_{L^{\infty}} +\|\Omega\|^2_{L^{\infty}} + 1) ds}.
\end{align}
According to Lemma \ref{Lemma6}, we have
\begin{align}\label{ineq14}
\|\nabla u\|_{L^{\infty}}\leq C\|\nabla u\|_{B^{0}_{\infty,\infty}}\ln(e + \|u\|^2_{H^s})+C.
\end{align}
By virtue of \eqref{ineq13} and \eqref{ineq14}, we deduce that
\begin{align}\label{ineq15}
\ln( e + \|(u,\tau)\|^2_{H^s} ) &\leq \ln(e + \|(u_0,\tau_0)\|^2_{H^s}) + \int_0^t C_{\mu}(\|\tau\|^2_{L^{\infty}}+\|u\|^2_{L^{\infty}} +\|\Omega\|^2_{L^{\infty}} + 1) ds \\ \notag
&~~~+Ct+C\int_0^t \|\nabla u\|_{B^{0}_{\infty,\infty}}\ln(e + \|(u,\tau)\|^2_{H^s} )ds.
\end{align}
Applying Gronwall's inequality to \eqref{ineq15}, we infer that
\begin{align}\label{ineq16}
\ln( e+ \|(u,\tau)\|^2_{H^s} )&\leq(\ln(e + \|(u_0,\tau_0)\|^2_{H^s})+Ct)e^{C\int_0^t \|\nabla u\|_{B^{0}_{\infty,\infty}}ds} \\ \notag
&~~~+e^{C\int_0^t \|\nabla u\|_{B^{0}_{\infty,\infty}}ds}\int_0^t C_{\mu}(\|\tau\|^2_{L^{\infty}}+\|u\|^2_{L^{\infty}} +\|\Omega\|^2_{L^{\infty}} + 1) ds.
\end{align}
Assume that $T^\ast<\infty$ and $\int_{0}^{T^\ast}\|\Omega(t)\|^2_{L^\infty}dt<\infty$. By virtue of Lemmas \ref{Lemma5}, \ref{Lemma5'}, we obtain $$\|\nabla u\|_{B^{0}_{\infty,\infty}}+\|u\|_{L^{\infty}}\leq C(\|u\|_{L^2}+\|\Omega(t)\|_{L^\infty}).$$
According to \eqref{ineq16} and Propositions \ref{7lemm1'}, \ref{prop1}, then we have $(u,\tau)\in L^\infty([0,T^\ast);H^s)$, which contradicts the assumption that $T^\ast$ is the maximal existence time.
\end{proof}
\begin{rema}
We can deduce from Lemma \ref{Lemma4} that
\begin{align*}
C_{\mu}\|[\Lambda^s,u]\tau\|^2_{L^2}+C_{\mu}\|\Lambda^{s-1}Q(\Omega,\tau)\|^2_{L^2}&\leq C_{\mu}\|\tau\|^2_{L^{\infty}}\|\Lambda^s u\|^2_{L^2}+C_{\mu}\|\nabla u\|^2_{L^{4}}\|\Lambda^{s-1}\tau\|^2_{L^4}  \\
&\leq C_{\mu}\|\tau\|^2_{L^{\infty}}\|\Lambda^s u\|^2_{L^2}+C_{\mu}\|\Omega\|^2_{L^{4}}\|\tau\|^2_{H^s} \\
&\leq C_{\mu}\|\tau\|^2_{L^{\infty}}\|\Lambda^s u\|^2_{L^2}+C_{\mu}\|\nabla u\|_{L^2} \|\Omega\|_{L^{\infty}}\|\tau\|^2_{H^s}.
\end{align*}
One can see that \eqref{ineq16} can be rewritten as
\begin{align}\label{ineq16'}
\ln( e+ \|(u,\tau)\|^2_{H^s} )&\leq(\ln(e+ \|(u_0,\tau_0)\|^2_{H^s})+Ct)e^{C\int_0^t \|\nabla u\|_{B^{0}_{\infty,\infty}}ds} \\ \notag
&~~~+e^{C\int_0^t \|\nabla u\|_{B^{0}_{\infty,\infty}}ds}\int_0^t C_{\mu}(\|\tau\|^2_{L^{\infty}}+\|\nabla u\|_{L^2} \|\Omega\|_{L^{\infty}} + 1) ds,
\end{align}
which is of significance in the proof of Theorem \ref{th1}.
\end{rema}
\subsection{Global strong solutions}
\subsubsection{The inviscid Oldroyd-B model}
{\bf The proof of Theorem \ref{th1} :}  \\
The proof of the local well-posedness of \eqref{eq2} is standard. We thus omit it and present the result here. For any $T<T^\ast$, we have
$$u\in C([0,T];H^s),~~~~\tau\in C([0,T];H^s)\cap L^2([0,T];H^{s+1}).$$

To get global existence, the key point is to obtain the uniform estimate of $\|\Omega\|_{L^{\infty}}$. However, due to the linear term
$\nabla \times {\rm div}~\tau$, it is difficult to get global estimate of $\|\Omega\|_{L^{\infty}}$ from the following equation
\begin{align}\label{eq5}
\frac{d}{dt}\Omega + u\cdot\nabla\Omega = \nabla \times {\rm div}~\tau.
\end{align}
Motivated by \cite{2015Elgindi}, we can cancel $\nabla \times {\rm div}~\tau$ with the dissipation term $\Delta\tau$. Define
$$\Gamma = \mu\Omega-R\tau,~~~R=\Delta^{-1}{\rm curl}~{\rm div}.$$
Then, we deduce from \eqref{eq2} that
\begin{align}\label{eq6}
\frac{d}{dt}\Gamma + u\cdot\nabla\Gamma = aR\tau + RQ(\Omega,\tau) + [R,u\cdot\nabla]\tau\triangleq\sum_{i = 1}^3 F_i.
\end{align}
Different from \cite{2015Elgindi}, there is no damping phenomenon for $\Gamma$ or $\Omega$. It seems impossible to expect global existence even in small initial data case. However, the disappearance of $D(u)$ leads to exponential dissipation for $\tau$ in $H^1$, which is useful to estimate $\Gamma$ in $L^{\infty}$.

Assume that
\begin{align}\label{assumption}
\|\nabla u(t)\|_{L^{2}} \leq 4c\kappa,~~~~\|\Gamma(t)\|_{L^{\infty}} \leq 4ca\mu,
\end{align}
for any $t\in[0,T]$. By Proposition \ref{prop1} and the condition \eqref{condition1}, we deduce that $\|\nabla u(t)\|_{L^2}\leq 2c\kappa$ for any $t\in[0,T]$.
Then we focus on $\|\Gamma\|_{L^{\infty}}$. According to \eqref{eq6}, we obtain
\begin{align}\label{ineq17}
\|\Gamma\|_{L^{\infty}} \leq \|\Gamma_0\|_{L^{\infty}} + \sum_{i = 1}^3\int_0^t \|F_i\|_{L^{\infty}} ds.
\end{align}
From Lemma \ref{Lemma5}, we have
\begin{align}\label{F1}
\|F_1\|_{L^{\infty}} &\leq a\|\Delta_{-1}R\tau\|_{L^{\infty}} + a\|(Id - \Delta_{-1})R\tau\|_{L^{\infty}} \\ \notag
&\leq Ca\|\tau\|_{L^2} + Ca\|\tau\|_{B^{0}_{\infty,1}}\\ \notag
&\leq Ca\|\tau\|_{H^2}.
\end{align}
Applying Lemmas \ref{Lemma1}, \ref{Lemma5} and \ref{Lemma6}, we get
\begin{align}\label{F2}
\|F_2\|_{L^{\infty}} &\leq \|\Delta_{-1}RQ(\Omega,\tau)\|_{L^{\infty}} + \|(Id - \Delta_{-1})RQ(\Omega,\tau)\|_{L^{\infty}} \\ \nonumber
&\leq C\|Q(\Omega,\tau)\|_{L^2} + C\|Q(\Omega,\tau)\|_{B^{0}_{\infty,1}} \\ \nonumber
&\leq C\|\nabla u\|_{L^2}\|\tau\|_{H^2} + C\|\Omega\|_{B^{0}_{\infty,1}}\|\tau\|_{H^2} \\ \nonumber
&\leq C\|\nabla u\|_{L^2}\|\tau\|_{H^2} + C\|\Omega\|_{L^{\infty}}\ln(e+ \|u\|_{H^s})\|\tau\|_{H^2} + C\|\tau\|_{H^2}.
\end{align}
From Lemma \ref{Lemma2}, we obtain
\begin{align}\label{F3}
\|F_3\|_{L^{\infty}} &\leq C(\|\Omega\|_{L^2} + \|\Omega\|_{L^{\infty}})\|\tau\|_{H^2} \\ \nonumber
&\leq C\|\nabla u\|_{L^2}\|\tau\|_{H^2} +C_\mu\|\Gamma\|_{L^{\infty}}\|\tau\|_{H^2} +C_\mu\|R\tau\|_{L^{\infty}}\|\tau\|_{H^2} \\ \nonumber
&\leq C\|\nabla u\|_{L^2}\|\tau\|_{H^2} +C_\mu\|\Gamma\|_{L^{\infty}}\|\tau\|_{H^2} +C_\mu\|\tau\|^2_{H^2}.
\end{align}
Plugging \eqref{F1}-\eqref{F3} into \eqref{ineq17}, we deduce from \eqref{condition1}, \eqref{assumption} and Corollary \ref{coro1} that
\begin{align}\label{ineq18}
\|\Gamma\|_{L^{\infty}} &\leq \|\Gamma_0\|_{L^{\infty}} + C\int_0^t (1+a)\|\tau\|_{H^2}  + \|\nabla u\|_{L^2}\|\tau\|_{H^2} +C_\mu\|\Gamma\|_{L^{\infty}}\|\tau\|_{H^2} +C_\mu\|\tau\|^2_{H^2} ds\\ \notag
&~~~+\int_0^t C_\mu\|\tau\|^2_{H^2}\ln(e + \|u\|_{H^s}) +C_\mu\|\Gamma\|_{L^{\infty}}\|\tau\|_{H^2}\ln(e + \|u\|_{H^s})ds \\ \notag
& \leq \|\Gamma_0\|_{L^{\infty}} + C(1+a)(a^{-1}+(a\mu)^{-\frac 1 2})\|\tau_0\|_{H^1} + C_\mu\int_0^t\|\tau\|^2_{H^2}\ln(e + \|u\|_{H^s}) ds \\ \notag
&~~~+C_\mu\int_0^t\|\Gamma\|_{L^{\infty}}\|\tau\|_{H^2}\ln(e + \|u\|_{H^s}) ds.
\end{align}
By \eqref{condition2}, we get
\begin{align}\label{ineq19}
\|\Gamma\|_{L^{\infty}} \leq \frac 3 2 ca\mu+ C_\mu\int_0^t\|\tau\|^2_{H^2}\ln(e+ \|u\|_{H^s}) ds+C_\mu\int_0^t\|\Gamma\|_{L^{\infty}}\|\tau\|_{H^2}\ln(e+ \|u\|_{H^s}) ds,
\end{align}
where we using the condition $\|\tau_0\|_{H^1} \leq c^2\lambda$ with
\begin{align*}
\lambda\leq\min\{a^{2}\mu,(a\mu)^{\frac 3 2},a\mu,a^{\frac 1 2}\mu^{\frac 3 2}\}.
\end{align*}
According to Lemma \ref{Lemma5}, \eqref{assumption} and Corollary \ref{coro1}, we similarly obtain
\begin{align}\label{ineq21}
C\int_0^t \|\nabla u\|_{B^{0}_{\infty,\infty}}ds &\leq C\int_0^t \|\nabla u\|_{L^2}+\|\Omega\|_{L^\infty}ds  \\ \notag
&\leq \frac a 8 t+C_\mu\int_0^t\|\tau\|_{H^2} ds \\ \notag
&\leq \frac a 8 t+C_\mu(a^{-1}+(a\mu)^{-\frac 1 2})\|\tau_0\|_{H^1}  \\ \notag
&\leq \frac a 8 t+C.
\end{align}
By \eqref{condition2}, \eqref{ineq16'}, \eqref{ineq21} and Corollary \ref{coro1}, we deduce that
\begin{align}\label{ineq22}
\ln(e + \|(u,\tau)\|^2_{H^s} )&\leq Ce^{\frac a 8 t}[\ln(e + \|(u_0,\tau_0)\|^2_{H^s} ) +t(1+\mu^{-1})\\ \notag
&~~~+\int_0^t C_{\mu}(\|\tau\|^2_{L^{\infty}}+\|\nabla u\|_{L^{2}}\|\Omega\|_{L^{\infty}}) ds]  \\ \notag
&\leq Ce^{\frac a 8 t}[\ln(e + \|(u_0,\tau_0)\|^2_{H^s} ) +t(1+\mu^{-1}+c^2a)+c^4a\mu+c^3]  \\ \notag
&\leq Ce^{\frac a 4 t}[\ln(e +\|(u_0,\tau_0)\|^2_{H^s}) +(a\mu)^{-1}+a^{-1}+1+\mu]  \\ \notag
& = A_0 e^{\frac a 4 t},
\end{align}
where $A_0 = C[\ln(e +\|(u_0,\tau_0)\|^2_{H^s}) +(a\mu)^{-1}+a^{-1}+1+\mu].$
Plugging \eqref{ineq22} into \eqref{ineq19}, using \eqref{condition2} and applying Proposition \ref{prop1}, we obtain
\begin{align}\label{ineq23}
\|\Gamma\|_{L^{\infty}}
&\leq \frac 3 2 ca\mu+ C_\mu\int_0^t\|\tau\|^2_{H^2}A_0 e^{\frac a 4 t} ds+C_\mu\int_0^t\|\Gamma\|_{L^{\infty}}\|\tau\|_{H^2}A_0 e^{\frac a 4 t} ds \\ \notag
&\leq \frac 3 2 ca\mu+ C_\mu(\mu^{-1}+a^{-1})\|\tau_0\|^2_{H^1}A_0+(1+a^{\frac 1 2}\mu^{-\frac 1 2})\|\tau_0\|_{H^1}A_0 \\ \notag
&\leq \frac 3 2 ca\mu+(1+a^{\frac 1 2}\mu^{-\frac 1 2})\|\tau_0\|_{H^1}A_0 \\ \notag
&\leq 2ca\mu+(1+a^{\frac 1 2}\mu^{-\frac 1 2})\|\tau_0\|_{H^1}\ln(e +\|(u_0,\tau_0)\|^2_{H^s}) \\ \notag
&\leq 3ca\mu,
\end{align}
where we using the condition $\|\tau_0\|_{H^1} \leq \frac{c^2\lambda}{\ln(e +\|(u_0,\tau_0)\|^2_{H^s}) }$ with
\begin{align*}
\lambda&=\min\{(a\mu)^{\frac 3 2},a\mu,a^{\frac 1 2}\mu^{\frac 3 2},a^2\mu^2,a^2\mu,a,a^{\frac 3 2}\mu^{\frac 5 2},a^{\frac 1 2}\mu^{\frac 1 2}\}  \\
&=\min\{a^{\frac 1 2}\mu^{\frac 3 2},a^2\mu^2,a^2\mu,a,a^{\frac 3 2}\mu^{\frac 5 2},a^{\frac 1 2}\mu^{\frac 1 2}\}.
\end{align*}
According to Propositions \ref{prop1} and \ref{prop2}, we can deduce that $T^\ast=+\infty$.
We thus complete the proof of Theorem \ref{th1}.
\hfill$\Box$
\subsubsection{The inviscid Hooke model}
Taking $\psi =(g+1)\psi_{\infty}$ with $\psi_{\infty}=e^{-\frac 1 2 |q|^2}$ and $\nu=0,~a=2,~\mu=1$ in \eqref{eq1}, we obtain
\begin{align}\label{gequ}
\left\{
\begin{array}{ll}
\partial_tu+u\cdot\nabla u+\nabla P ={\rm div}~\tau,~~~~{\rm div}~u=0,\\[1ex]
\partial_t g + u\cdot\nabla g + \frac{1}{\psi_{\infty}}\nabla_q \cdot \big( \Omega q g \psi_{\infty}\big)-\Delta g = \frac{1}{\psi_{\infty}}\nabla_q \cdot \big( \nabla_qg \psi_{\infty}\big). \\[1ex]
\end{array}
\right.
\end{align}
Let $\langle q \rangle=\sqrt{1+q^2}$. Global well-posedness for the inviscid Hooke model \eqref{gequ} is considered in the following corollary. Firstly, we establish a new estimate of $\langle q\rangle^n\nabla^m_qg$ in $L^{\infty}(\mathcal{L}^2)$. Then,  we obtain the smallness of $\|\Omega\|_{L^{\infty}}$ under the condition \eqref{smallness} by virtue of the corresponding Ordroyd-B model \eqref{eq2}. Finally, we derive the global estimate for $\|u\|_{H^s} + \|\langle q\rangle g\|^2_{L^2(\mathcal{L}^2)} + \|\langle q\rangle\nabla_q g\|^2_{H^{s-1}(\mathcal{L}^2)}$, which implies the global existence of the inviscid Hooke model considered.
\begin{coro}\label{th1'}
Let $(u,g)$ be a strong solution of \eqref{gequ} with the initial data $(u_0,g_0)\in H^s\times H^s(\mathcal{L}^2)$ and $(\langle q\rangle g_0,\langle q\rangle \nabla_q g_0,\langle q\rangle \nabla^2_q g_0)\in L^{\infty}(\mathcal{L}^2)$. Let $\int_{\mathbb{R}^2} g_0\psi_{\infty}dq=0$ and $(u_0,\tau_0)$ satisfies the conditions in Theorem \ref{th1}. In addiction, if there exists some sufficiently small constant $\varepsilon$, which is not dependent on the initial data, such that
\begin{align}\label{smallness}
\|\tau_0\|_{B^0_{\infty,1}}+\|(u_0,\tau_0)\|_{L^2}\|\tau_0\|_{L^{\infty}}\leq \varepsilon,
\end{align}
then \eqref{gequ} admits a unique global strong solution $(u,g)\in C([0,\infty); H^s)\times C([0,\infty); H^s(\mathcal{L}^2))$.
\end{coro}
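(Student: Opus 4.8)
The plan is to reduce the Hooke model \eqref{gequ} to the Oldroyd-B model \eqref{eq2} through its second moment, and then to close the high-regularity estimates for the microscopic unknown $g$ separately, $g$ being controlled through a new $L^\infty_x(\mathcal{L}^2)$ bound for its $q$-weighted derivatives. First I would invoke the (standard) local well-posedness of \eqref{gequ} in $H^s\times H^s(\mathcal{L}^2)$, which gives a maximal existence time $T^\ast$; by the attendant blow-up criterion it then suffices to bound $\|u\|_{H^s}+\|g\|_{H^s(\mathcal{L}^2)}$ on every finite subinterval of $[0,T^\ast)$. The decisive structural fact is that, with $\mathcal{U}=\frac12|q|^2$, the second moment $\tau_{ij}=\int_{\mathbb{R}^2}q_iq_j g\,\psi_\infty\,dq$ satisfies, upon integrating the $g$-equation against $q_iq_j\psi_\infty$ in $q$, exactly the equation $\eqref{eq2}_2$ with $a=2$, $\mu=1$: the co-rotation transport $\Omega q\cdot\nabla_q g$ produces precisely $Q(\Omega,\tau)=\tau\Omega-\Omega\tau$, the Fokker--Planck term produces the damping $-2\tau$, and $-\Delta_x g$ produces $-\Delta_x\tau$, while the normalization $\int_{\mathbb{R}^2}g_0\psi_\infty\,dq=0$ is propagated. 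Since $\|\tau_0\|_{H^s}\lesssim\|g_0\|_{H^s(\mathcal{L}^2)}$ and $(u_0,\tau_0)$ meets the hypotheses of Theorem \ref{th1}, uniqueness for \eqref{eq2} forces $(u,\tau)$ on $[0,T^\ast)$ to coincide with the global solution produced there; hence all estimates of Proposition \ref{prop1}, Corollary \ref{coro1} and of the proof of Theorem \ref{th1} become available for $(u,\tau)$, in particular $\|\nabla u\|_{L^2}\le 2c\kappa$, the exponential decay of $\|\tau\|_{H^1}$, the bound $\int_0^\infty\|\tau\|_{H^2}^2\,ds\lesssim\|\tau_0\|_{H^1}^2$, the uniform estimate $\|\Gamma\|_{L^\infty}\le 3ca\mu$ (hence $\|\Omega\|_{L^\infty}\lesssim a+\|\tau\|_{H^2}$, so $\int_0^T\|\Omega\|_{L^\infty}^2\,ds<\infty$ for each finite $T$), and the controlled growth $\ln(C+\|(u,\tau)\|_{H^s}^2)\le A_0 e^{at/4}$.

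Next I would establish the new weighted bound. For $0\le n,m\le 2$ put $\phi_{n,m}(t,x)=\|\langle q\rangle^n\nabla_q^m g(t,x,\cdot)\|_{\mathcal{L}^2}^2$. Testing the equation satisfied by $\langle q\rangle^n\nabla_q^m g$ against $\langle q\rangle^n\nabla_q^m g\,\psi_\infty$ in the $q$-variable alone, one obtains a scalar parabolic inequality of the form $\partial_t\phi_{n,m}-\Delta_x\phi_{n,m}+u\cdot\nabla_x\phi_{n,m}\le -c\,\phi_{n,m}+R_{n,m}(t,x)$, in which two cancellations are essential: the microscopic transport $\Omega q\cdot\nabla_q$ is divergence-free against $\langle q\rangle^{2n}\psi_\infty$ because $q^{\top}\Omega q=0$, and the zeroth-order terms $\Omega\cdot\nabla_q^m g$ produced when $\nabla_q^m$ (with $m\ge1$) is commuted through $\Omega q\cdot\nabla_q$ vanish in the pairing because $\Omega$ is antisymmetric. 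Consequently $R_{n,m}$ carries no norm of $\Omega$ at all, only Ornstein--Uhlenbeck corrections which, by Lemma \ref{Lemma7}, are dominated by the $\phi_{n',m'}$ with $(n',m')$ lexicographically below $(n,m)$. Running the maximum principle together with Gronwall in the order $(0,0),(0,1),(1,0),(0,2),(1,1),(1,2)$ then yields time-uniform bounds for $\|g\|_{L^\infty_x(\mathcal{L}^2)}$, $\|\nabla_q g\|_{L^\infty_x(\mathcal{L}^2)}$, $\|\langle q\rangle g\|_{L^\infty_x(\mathcal{L}^2)}$, $\|\langle q\rangle\nabla_q g\|_{L^\infty_x(\mathcal{L}^2)}$ and $\|\langle q\rangle\nabla_q^2 g\|_{L^\infty_x(\mathcal{L}^2)}$, depending only on the data --- with no smallness and no loss of $x$-regularity.

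Then I would close the high-regularity estimate for $Y(t):=\|u\|_{H^s}^2+\|\langle q\rangle g\|_{L^2(\mathcal{L}^2)}^2+\|\langle q\rangle\nabla_q g\|_{H^{s-1}(\mathcal{L}^2)}^2$. The evolution of $\|u\|_{H^s}$ is exactly the one already obtained in the proof of Proposition \ref{prop2} through \eqref{ineq9'}, driven by $\|\nabla u\|_{L^\infty}\|u\|_{H^s}^2$ and $\|\nabla\Lambda^s\tau\|_{L^2}^2$, both under control. For the microscopic part one differentiates the $g$-equation once in $q$, obtaining for $v=\nabla_q g$ the equation $\partial_t v+u\cdot\nabla_x v+\Omega q\cdot\nabla_q v+\Omega v+v-\Delta_x v=\mathcal{L}v$, applies $\Lambda^{s-1}$, and tests against $\langle q\rangle^2\Lambda^{s-1}v\,\psi_\infty$. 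The transport $\Omega q\cdot\nabla_q\Lambda^{s-1}v$ and the rotation $\Omega\Lambda^{s-1}v$ cancel for the two reasons of the previous paragraph, while the commutators $[\Lambda^{s-1},u\cdot\nabla_x]v$ and $[\Lambda^{s-1},\Omega]v$ are handled by Lemma \ref{Lemma4}, which produces the factors $\|u\|_{H^s}$, $\|\Lambda^{s-1}\Omega\|_{L^2}\lesssim\|u\|_{H^s}$ and $\|\nabla^2u\|_{L^2}\lesssim\|u\|_{H^s}$ --- all at the $H^s$ level of $u$, hence with no derivative loss --- multiplied by $\|v\|_{L^\infty_x(\mathcal{L}^2)}$, $\|\langle q\rangle v\|_{L^\infty_x(\mathcal{L}^2)}$ and $\|\Lambda^{s-2}v\|_{L^\infty_x(\mathcal{L}^2)}\lesssim\|v\|_{H^{s-1}(\mathcal{L}^2)}$ (Sobolev embedding, $s>2$); the weight $\langle q\rangle^2$ interacting with $\mathcal{L}$ and with the lower-order $+v$ term is absorbed using Lemma \ref{Lemma7} and the $L^\infty_x(\mathcal{L}^2)$ bounds just obtained. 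This gives $Y'(t)\le G(t)(1+Y(t))$ with $G\in L^1_{\mathrm{loc}}([0,\infty))$, and it is precisely this integrability that forces the smallness \eqref{smallness}: $\|(u_0,\tau_0)\|_{L^2}\|g_0\|_{L^\infty(\mathcal{L}^2)}$ controls the products of $\|u\|_{L^2}$ with the weighted $L^\infty_x(\mathcal{L}^2)$ norms of $g$, and $\|g_0\|_{B^0_{\infty,1}(\mathcal{L}^2)}$ controls the Besov piece keeping $\int_0^t\|\nabla u\|_{L^\infty}\,ds$ small. Gronwall then delivers $Y\in L^\infty_{\mathrm{loc}}([0,\infty))$, and a final routine bootstrap through the $g$-equation --- using the $\Delta_x$-smoothing against the now-controlled $\mathcal{L}g$, $\Omega q\cdot\nabla_q g$ and $u\cdot\nabla_x g$, with $\langle q\rangle\nabla_q g$ and $\langle q\rangle\nabla_q^2 g$ in hand --- upgrades it to $(u,g)\in L^\infty_{\mathrm{loc}}([0,\infty);H^s\times H^s(\mathcal{L}^2))$, contradicting $T^\ast<\infty$. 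Uniqueness follows from a standard $L^2\times L^2(\mathcal{L}^2)$ estimate on the difference of two solutions.

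The step I expect to be the main obstacle is this high-regularity closure. Commuting $\Lambda^s$ with the microscopic transport $\Omega q\cdot\nabla_q g$ naively costs one $x$-derivative on $u$, which Theorem \ref{th1} does not supply. The way out --- and the reason one propagates $\langle q\rangle\nabla_q g$ in $H^{s-1}$ rather than $g$ in $H^s$ --- is to work one $x$-order lower on the $q$-differentiated equation, so that $\Omega$ enters only through $\|\Lambda^{s-1}\Omega\|_{L^2}\lesssim\|u\|_{H^s}$, while the new $L^\infty_x(\mathcal{L}^2)$ bounds absorb every high-$q$-weight, low-$x$-regularity factor; verifying that this genuinely closes the Gronwall loop against the Oldroyd-B estimates --- notably that the $\ln$-type growth of $\|u\|_{H^s}$ and the merely $L^2_t$ control of $\|\Omega\|_{L^\infty}$ are enough on finite intervals --- is the technical heart of the argument.
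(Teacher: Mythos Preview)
Your overall architecture---reducing to the Oldroyd-B system via the second moment, proving $L^\infty_x(\mathcal{L}^2)$ bounds on $\langle q\rangle^n\nabla_q^m g$, and closing a Gronwall for the functional containing $\|u\|_{H^s}^2$ and $\|\langle q\rangle\nabla_q g\|_{H^{s-1}(\mathcal{L}^2)}^2$---matches the paper. But there is a genuine gap in the high-regularity closure, and it is precisely the step you flag as ``the technical heart.''

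When you commute $\Lambda^{s-1}$ through $\Omega q\cdot\nabla_q v$ (with $v=\nabla_q g$), the relevant commutator is $[\Lambda^{s-1},\Omega](q\cdot\nabla_q v)=[\Lambda^{s-1},\Omega](q\cdot\nabla_q^2 g)$, not $[\Lambda^{s-1},\Omega]v$. Whether you estimate this by Lemma~\ref{Lemma4} or by a Kato--Ponce product rule, the unavoidable output (after pairing against $\langle q\rangle^2\nabla_q\Lambda^{s-1}g\,\psi_\infty$) contains a term of size $C\|\Omega\|_{L^\infty}^2\|\langle q\rangle\nabla_q^2\Lambda^{s-1}g\|_{L^2(\mathcal{L}^2)}^2$. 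The quantity $\|\langle q\rangle\nabla_q^2\Lambda^{s-1}g\|_{L^2(\mathcal{L}^2)}^2$ is not part of $Y$; it is exactly the $q$-dissipation produced by $\mathcal{L}$ on the left-hand side. Hence this term cannot be sent into the Gronwall factor $G(t)$---it must be \emph{absorbed}, and that forces $C\|\Omega\|_{L^\infty}^2<1$, i.e.\ pointwise smallness of $\|\Omega\|_{L^\infty}$, not merely $\|\Omega\|_{L^\infty}\in L^2_t$.

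This is what the paper spends a separate lemma on (Lemma~\ref{Omega}): under \eqref{smallness} one shows $\|\Omega\|_{L^\infty}<\sigma$ for any prescribed $\sigma$, by combining the bound $\|\Gamma\|_{L^\infty}\le C\varepsilon$ from Theorem~\ref{th1} with a heat-kernel smoothing estimate on the $\tau$-equation that controls $\|\tau\|_{B^0_{\infty,1}}$ through $\|\tau\|_{L^\infty}\le C\|g\|_{L^\infty(\mathcal{L}^2)}\le C\|g_0\|_{L^\infty(\mathcal{L}^2)}$ and $\|u\|_{H^1}$. So the role of \eqref{smallness} is not to make $G\in L^1_{\mathrm{loc}}$ (that already follows from Theorem~\ref{th1}), but to make $\|\Omega\|_{L^\infty}$ small enough to close the absorption. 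Your proposal does not supply this mechanism.

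Two minor remarks. First, the paper only proves $\|\langle q\rangle^n\nabla_q^m g\|_{L^\infty(\mathcal{L}^2)}\le Ce^{Ct}$, not time-uniform bounds; this is enough, since the final Gronwall is on finite intervals and yields a doubly-exponential bound $e^{e^{Ct}}$. Second, the paper includes $\|\langle q\rangle g\|_{H^s(\mathcal{L}^2)}^2$ (not just $L^2$) in the energy, which directly gives $g\in H^s(\mathcal{L}^2)$ without a separate bootstrap.
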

To begin with, we establish a new estimate of $\|\langle q\rangle^{n}\nabla^m_qg\|_{L^{\infty}(\mathcal{L}^2)}$ in the following lemma.
\begin{lemm}\label{Conservation}
Let $(u,g)$ be a strong solution of \eqref{gequ} with the initial data $(u_0,g_0)\in H^s\times H^s(\mathcal{L}^2)$ and $(\langle q\rangle g_0,\langle q\rangle \nabla_q g_0,\langle q\rangle \nabla^2_q g_0)\in L^{\infty}(\mathcal{L}^2)$. Let $\int_{\mathbb{R}^2} g_0\psi_{\infty}dq=0$. There exists positive constant $C$ such that
\begin{align*}
\|\langle q\rangle g\|_{L^{\infty}(\mathcal{L}^2)}+\|\langle q\rangle \nabla_q g\|_{L^{\infty}(\mathcal{L}^2)}+\|\langle q\rangle^2 \nabla_q g\|_{L^{\infty}(\mathcal{L}^2)}+\|\langle q\rangle \nabla^2_q g\|_{L^{\infty}(\mathcal{L}^2)} \leq C e^{Ct}.
\end{align*}
\end{lemm}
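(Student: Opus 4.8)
The plan is to propagate $L^\infty_x(\mathcal{L}^2)$ bounds for $g$ and its $q$-derivatives via weighted energy estimates, exploiting the transport--heat structure in the $x$-variable together with the maximum principle. First I would rewrite the second equation of \eqref{gequ}. Since $\psi_\infty=e^{-|q|^2/2}$, one has $\psi_\infty^{-1}\nabla_q\cdot(\psi_\infty\nabla_q f)=\Delta_q f-q\cdot\nabla_q f=:\mathcal{L}_q f$, a self-adjoint operator with $\langle\mathcal{L}_q f,f\rangle_{\mathcal{L}^2}=-\|\nabla_q f\|_{\mathcal{L}^2}^2$; and since $\Omega$ is antisymmetric, $\nabla_q\cdot(\Omega q\,\psi_\infty)=-(\Omega q\cdot q)\psi_\infty=0$, so the drift term equals $\psi_\infty^{-1}\nabla_q\cdot(\Omega q g\psi_\infty)=\Omega q\cdot\nabla_q g$ and the equation becomes
\begin{align*}
\partial_t g + u\cdot\nabla_x g + \Omega q\cdot\nabla_q g - \Delta_x g - \mathcal{L}_q g = 0 .
\end{align*}
Two structural facts will be used throughout: (i) for a radial weight $\phi=\phi(|q|)$ one has $\Omega q\cdot\nabla_q\phi=0$, so after integrating by parts in $q$ the drift $\Omega q\cdot\nabla_q(\cdot)$ contributes nothing to a $\phi$-weighted $\mathcal{L}^2$ identity; (ii) every $\Omega$-linear zeroth-order term cancels against the tested quantity, e.g. $\langle\Omega T,T\rangle=\langle T\Omega,T\rangle=0$ for vectors or matrices $T$. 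Because of (i)--(ii) the vorticity $\Omega$ drops out of all the energy identities below, which is exactly why a crude bound is available without any control on $\|\Omega\|_{L^\infty}$.

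Next I would estimate the unweighted quantities $\|g\|_{\mathcal{L}^2}$, $\|\nabla_q g\|_{\mathcal{L}^2}$, $\|\nabla_q^2 g\|_{\mathcal{L}^2}$ as functions of $(t,x)$. Applying $\nabla_q$ and $\nabla_q^2$ to the equation --- using the commutator identities $\nabla_q^m\mathcal{L}_q=\mathcal{L}_q\nabla_q^m-m\nabla_q^m$ and $\nabla_q(\Omega q\cdot\nabla_q g)=\Omega q\cdot\nabla_q\nabla_q g-\Omega\nabla_q g$ together with its second-order analogue (which produces only further $\Omega$-commutators) --- and testing against $\nabla_q^j g\,\psi_\infty$, one gets for $j=0,1,2$
\begin{align*}
(\partial_t + u\cdot\nabla_x - \Delta_x)\|\nabla_q^j g\|_{\mathcal{L}^2}^2 + 2\|\nabla_x\nabla_q^j g\|_{\mathcal{L}^2}^2 + 2\|\nabla_q^{j+1} g\|_{\mathcal{L}^2}^2 + 2j\,\|\nabla_q^j g\|_{\mathcal{L}^2}^2 = 0 ,
\end{align*}
all $\Omega$-dependent contributions having vanished by (i)--(ii). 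Since $\mathrm{div}\,u=0$, the maximum principle for $\partial_t+u\cdot\nabla_x-\Delta_x$ gives $\|\nabla_q^j g(t)\|_{L^\infty_x(\mathcal{L}^2)}\le\|\nabla_q^j g_0\|_{L^\infty_x(\mathcal{L}^2)}$ for $j=0,1,2$.

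Then come the weighted estimates: test the equations for $g$, $\nabla_q g$, $\nabla_q^2 g$ against $\langle q\rangle^{2k}$ times the respective quantity times $\psi_\infty$, with $k=1$ for $\langle q\rangle g$, $\langle q\rangle\nabla_q g$, $\langle q\rangle\nabla_q^2 g$ and $k=2$ for $\langle q\rangle^2\nabla_q g$. Again (i) removes the drift and (ii) removes the $\Omega$-commutators, so the only new term is, from the Fokker--Planck part after integration by parts, $-\tfrac12\int|(\cdot)|^2\,\mathcal{L}_q(\langle q\rangle^{2k})\,\psi_\infty\,dq$, where $\mathcal{L}_q(\langle q\rangle^{2k})$ is a degree-$k$ polynomial in $|q|^2$ with negative leading coefficient: its top-order part yields a favorable $+c\,\||q|^k(\cdot)\|_{\mathcal{L}^2}^2$ that I keep on the left, and its lower-order part, $\||q|^m(\cdot)\|_{\mathcal{L}^2}^2$ with $m\le k-1$, is dominated by $\|\langle q\rangle^{k-1}(\cdot)\|_{\mathcal{L}^2}^2$. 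Treating the $k=1$ cases before $k=2$, this gives
\begin{align*}
(\partial_t + u\cdot\nabla_x - \Delta_x)\,\|\langle q\rangle^k\nabla_q^j g\|_{\mathcal{L}^2}^2 \le C\,\|\langle q\rangle^{k-1}\nabla_q^j g\|_{\mathcal{L}^2}^2 ,
\end{align*}
whose right-hand side is bounded in $L^\infty_x$ by the previous steps; the maximum principle then yields $\|\langle q\rangle^k\nabla_q^j g(t)\|_{L^\infty_x(\mathcal{L}^2)}^2\le\|\langle q\rangle^k\nabla_q^j g_0\|_{L^\infty_x(\mathcal{L}^2)}^2+Ct$, and summing the four cases proves the claim (in fact with only linear growth in $t$).

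The places where care is needed are: (a) deriving the $\nabla_q g$- and $\nabla_q^2 g$-equations correctly and checking that \emph{every} $\Omega$-dependent term is of the type annihilated by (i) or (ii) --- this is the structural observation that lets the estimates close with no bound on $\|\Omega\|_{L^\infty}$ --- while keeping track of the (harmless) lower-order terms generated by the weights so that the four quantities are estimated in a consistent order; and (b) justifying the $x$-space maximum principle for the functions $x\mapsto\|\langle q\rangle^k\nabla_q^j g(t,x,\cdot)\|_{\mathcal{L}^2}^2$, which relies on the spatial regularity and decay built into the strong-solution hypothesis (or on a routine truncation/approximation argument). I expect (a) to be the main obstacle.
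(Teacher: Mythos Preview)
Your approach is correct and matches the paper's: both exploit the antisymmetry of $\Omega$ to make every drift and commutator contribution vanish in the weighted $\mathcal{L}^2$ identities, and then use the transport--diffusion structure in $x$ to propagate the resulting pointwise-in-$x$ differential inequalities. The only differences are cosmetic---the paper bounds the source by the \emph{same} weighted quantity and passes through $L^p(\mathcal{L}^2)$ estimates for all $p\geq 2$ before sending $p\to\infty$ (giving the stated $e^{Ct}$), whereas you bound the source by the lower-weighted quantity and invoke the maximum principle for $\partial_t+u\cdot\nabla_x-\Delta_x$ directly, which yields the slightly sharper polynomial growth you note.
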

\begin{proof}
Firstly, we have $$\|g\|_{L^{\infty}(\mathcal{L}^2)}\leq\|g_0\|_{L^{\infty}(\mathcal{L}^2),}$$
by noticing that the term $\frac{1}{\psi_{\infty}}\nabla_q \cdot \big( \Omega q g \psi_{\infty}\big)$ would vanish in energy estimate since the antisymmetry of $\Omega$ and $\int_{\mathbb{R}^2} g\psi_{\infty}dq=0$. For more details, one can refer to \cite{HookeGlobal}.

For $\|\langle q\rangle g\|_{L^{\infty}(\mathcal{L}^2)}$, taking $\mathcal{L}^2$ inner product with $\langle q\rangle^2 g$ to $(\ref{gequ})_2$, we infer that
\begin{align}\label{ho1}
&\frac{1}{2}\frac{d}{dt}\|\langle q\rangle g\|^2_{\mathcal{L}^2} +\frac{1}{2} u\cdot\nabla\|\langle q\rangle g\|^2_{\mathcal{L}^2} -\frac{1}{2}\Delta\|\langle q\rangle g\|^2_{\mathcal{L}^2} + \|\langle q\rangle \nabla g\|^2_{\mathcal{L}^2} + \|\langle q\rangle \nabla_q g\|^2_{\mathcal{L}^2} \\ \notag
&= -\int_{\mathbb{R}^{2}}\frac{1}{\psi_{\infty}}\nabla_q \cdot \big( \Omega q g \psi_{\infty}\big)\langle q\rangle^2g\psi_{\infty}dq + \int_{\mathbb{R}^{2}} g^2\psi_{\infty} dq - \int_{\mathbb{R}^{2}} q^2 g^2\psi_{\infty} dq.
\end{align}
Since
\begin{align*}
\int_{\mathbb{R}^{2}}\frac{1}{\psi_{\infty}}\nabla_q \cdot \big( \Omega q g \psi_{\infty}\big)\langle q\rangle^2g\psi_{\infty}dq &= -\int_{\mathbb{R}^{2}} \Omega q g \psi_{\infty}\cdot\big(2qg + \langle q\rangle^2\nabla_qg\big)dq \\ \notag
&= -\frac{1}{2}\int_{\mathbb{R}^{2}} \Omega q\psi_{\infty}\langle q\rangle^2\nabla_qg^2dq \\ \notag
&= -\frac{1}{2}\int_{\mathbb{R}^{2}} \Omega^{ik} \big(\delta^i_k\langle q\rangle^2 + q_iq_k\big)g^2\psi_{\infty}dq \\ \notag
&=0,
\end{align*}
we can deduce from \eqref{ho1} that for any $p\geq2$, there holds
\begin{align}
&\frac{1}{p}\frac{d}{dt}\|\langle q\rangle g\|^p_{L^p(\mathcal{L}^2)} \leq C\|\langle q\rangle g\|^{p}_{L^p(\mathcal{L}^2)},
\end{align}
which implies $\|\langle q\rangle g\|_{L^{\infty}(\mathcal{L}^2)} \leq C e^{Ct}$.

Similarly, for $\|\langle q\rangle^{n} \nabla_q g\|_{L^{\infty}(\mathcal{L}^2)}$, $n\in\{1,2\}$, we have
\begin{align}\label{ho2}
&\frac{1}{2}\frac{d}{dt}\|\langle q\rangle^n\nabla_q g\|^2_{\mathcal{L}^2} +\frac{1}{2} u\cdot\nabla\|\langle q\rangle^n\nabla_q g\|^2_{\mathcal{L}^2} -\frac{1}{2}\Delta\|\langle q\rangle^n\nabla_q g\|^2_{\mathcal{L}^2}+\|\langle q\rangle^n\nabla\nabla_q g\|^2_{\mathcal{L}^2} \\ \notag
&=\int_{\mathbb{R}^{2}}\nabla_q (\frac{1}{\psi_{\infty}}\nabla_q \cdot(\nabla_q g\psi_{\infty}))\langle q\rangle^{2n}\nabla_qg\psi_{\infty}dq,
\end{align}
where we use the fact that
\begin{align*}
&\int_{\mathbb{R}^{2}}\nabla_q\big(\frac{1}{\psi_{\infty}}\nabla_q \cdot \big( \Omega q g \psi_{\infty}\big)\big)\langle q\rangle^{2n}\nabla_qg\psi_{\infty}dq \\ \notag
&= \int_{\mathbb{R}^{2}}\nabla^l_q\big(\Omega^{ik} q_k\nabla^i_qg - \Omega^{ik} q_kq_ig\big)\langle q\rangle^{2n}\nabla^l_qg\psi_{\infty}dq \\ \notag
&= \int_{\mathbb{R}^{2}}\big(\Omega^{ik} \delta^l_k\nabla^i_qg + \Omega^{ik} q_k\nabla^{il}_qg - \Omega^{ik} (\delta^l_kq_i+\delta^l_iq_k)g-\Omega^{ik} q_kq_i\nabla^l_qg\big)\langle q\rangle^{2n}\nabla^l_qg\psi_{\infty}dq\\ \notag
&=0.
\end{align*}
We deduce from Lemma \ref{Lemma7} that
\begin{align*}
&\int_{\mathbb{R}^{2}}\nabla_q (\frac{1}{\psi_{\infty}}\nabla_q \cdot(\nabla_q g\psi_{\infty}))\langle q\rangle^{2n}\nabla_qg\psi_{\infty}dq \\ \notag
&=\int_{\mathbb{R}^{2}}[\frac{1}{\psi_{\infty}}\nabla_q\cdot(\nabla_q \nabla_q g\psi_{\infty})-\nabla_qg]\langle q\rangle^{2n}\nabla_qg\psi_{\infty}dq\\ \notag
&=\int_{\mathbb{R}^{2}}\nabla_q\cdot(\nabla_q \nabla_q g\psi_{\infty})\langle q\rangle^{2n}\nabla_qgdq -\|\langle q\rangle^n \nabla_q g\|^2_{\mathcal{L}^2} \\ \notag
&=-2n\int_{\mathbb{R}^{2}}\nabla_q \nabla_q g\psi_{\infty}\langle q\rangle^{2(n-1)}q\nabla_qgdq-\|\langle q\rangle^n \nabla^2_q g\|^2_{\mathcal{L}^2} -\|\langle q\rangle^n \nabla_q g\|^2_{\mathcal{L}^2}\\ \notag
&\leq C\|\langle q\rangle^{n} \nabla_q g\|^2_{\mathcal{L}^2}.
\end{align*}
Plugging the above estimate into \eqref{ho2}, we infer that
\begin{align}
\frac{1}{2}\frac{d}{dt}\|\langle q\rangle^n\nabla_q g\|^2_{\mathcal{L}^2} +\frac{1}{2} u\cdot\nabla\|\langle q\rangle^n\nabla_q g\|^2_{\mathcal{L}^2} -\frac{1}{2}\Delta\|\langle q\rangle^n\nabla_q g\|^2_{\mathcal{L}^2} \leq C\|\langle q\rangle^{n} \nabla_q g\|^2_{\mathcal{L}^2}.
\end{align}
Therefore we deduce that for any $p\geq2$, there holds
\begin{align}
&\frac{1}{p}\frac{d}{dt}\|\langle q\rangle^n\nabla_q g\|^p_{L^p(\mathcal{L}^2)} \leq C\|\langle q\rangle^n\nabla_q g\|^p_{L^p(\mathcal{L}^2)},
\end{align}
which implies $\|\langle q\rangle^n\nabla_q g\|_{L^{\infty}(\mathcal{L}^2)} \leq C e^{Ct}$.

For $\|\langle q\rangle \nabla^2_q g\|_{L^{\infty}(\mathcal{L}^2)}$, we have
\begin{align}\label{ho3}
&\frac{1}{2}\frac{d}{dt}\|\langle q\rangle\nabla^2_q g\|^2_{\mathcal{L}^2} +\frac{1}{2} u\cdot\nabla\|\langle q\rangle\nabla^2_q g\|^2_{\mathcal{L}^2} -\frac{1}{2}\Delta\|\langle q\rangle\nabla^2_q g\|^2_{\mathcal{L}^2}+\|\langle q\rangle\nabla\nabla^2_q g\|^2_{\mathcal{L}^2}  \\ \notag
&=\int_{\mathbb{R}^{2}}\big[\nabla^2_q \big(\frac{1}{\psi_{\infty}}\nabla_q \cdot (\nabla_q g\psi_{\infty})\big)\big]\langle q\rangle^{2}\nabla^2_qg\psi_{\infty}dq,
\end{align}
where we use the fact that
\begin{align*}
&\int_{\mathbb{R}^{2}}\nabla^2_q\big(\frac{1}{\psi_{\infty}}\nabla_q \cdot \big( \Omega q g \psi_{\infty}\big)\big)\langle q\rangle^2\nabla^2_qg\psi_{\infty}dq \\ \notag
&= \int_{\mathbb{R}^{2}}\nabla^{lm}_q\big(\Omega^{ik} q_k\nabla^i_qg - \Omega^{ik} q_kq_ig\big)\langle q\rangle^2\nabla^{lm}_qg\psi_{\infty}dq \\ \notag
&= \int_{\mathbb{R}^{2}}\nabla^m_q\big(\Omega^{il}\nabla^i_qg + \Omega^{ik} q_k\nabla^{il}_qg - \big(\Omega^{il}q_i+\Omega^{lk}q_k)g-\Omega^{ik} q_kq_i\nabla^l_qg\big)\langle q\rangle^2\nabla^{lm}_qg\psi_{\infty}dq \\ \notag
&= \int_{\mathbb{R}^{2}}\big(\Omega^{il}\nabla^{im}_qg + \Omega^{im}\nabla^{il}_qg + \Omega^{ik} q_k\nabla^{ilm}_qg- \Omega^{ik} q_kq_i\nabla^{lm}_qg - \big(\Omega^{ml}+\Omega^{lm}\big)g \\ \notag
&~~~-\big(\Omega^{il}q_i+\Omega^{lk}q_k\big)\nabla^m_qg - \big(\Omega^{im}q_i+\Omega^{mk} q_k\big)\nabla^l_qg \big)\langle q\rangle^2\nabla^{lm}_qg\psi_{\infty}dq \\ \notag
&=0.
\end{align*}
Moreover, we can deduce that
\begin{align*}
\int_{\mathbb{R}^{2}}\big[\nabla^2_q \big(\frac{1}{\psi_{\infty}}\nabla_q \cdot (\nabla_q g\psi_{\infty})\big)\big]\langle q\rangle^{2}\nabla^2_qg\psi_{\infty}dq &=  \int_{\mathbb{R}^{2}}\big[\frac{1}{\psi_{\infty}}\nabla_q\cdot(\nabla_q \nabla^2_q g\psi_{\infty})-\nabla^2_qg\big]\langle q\rangle^{2}\nabla^2_qg\psi_{\infty}dq\\ \notag
&=\|\nabla_q^2 g\|^2_{\mathcal{L}^2}-\|\langle q\rangle \nabla_q\nabla^2_q g\|^2_{\mathcal{L}^2} -2\|\langle q\rangle \nabla^2_q g\|^2_{\mathcal{L}^2}.
\end{align*}
Plugging the above equality into \eqref{ho3}, we infer that
\begin{align}
&\frac{1}{p}\frac{d}{dt}\|\langle q\rangle\nabla_q^2 g\|^p_{L^p(\mathcal{L}^2)} \leq C\|\langle q\rangle\nabla_q^2 g\|^p_{L^p(\mathcal{L}^2)},
\end{align}
which implies $\|\langle q\rangle\nabla_q^2 g\|_{L^{\infty}(\mathcal{L}^2)} \leq C e^{Ct}$. We thus complete the proof of Lemma \ref{Conservation}.
\end{proof}
By virtue of Theorem \ref{th1}, we obtain the global existence of $u$. The following lemma is about the global existence of $g$.
\begin{lemm}\label{Omega}
Let $(u,g)$ be a strong solution of \eqref{gequ} with the initial data $(u_0,g_0)\in H^s\times H^s(\mathcal{L}^2)$. Let $\int_{\mathbb{R}^2} g_0\psi_{\infty}dq=0$ and $(u_0,\tau_0)$ satisfies the conditions in Theorem \ref{th1}. Then for any $\sigma>0$, there exist positive constant $\varepsilon$ small enough such that if
$$
\|\tau_0\|_{B^0_{\infty,1}}+ \|(u_0,\tau_0)\|_{L^2}\|\tau_0\|_{L^{\infty}}\leq\varepsilon,
$$
then there holds
$$\|\Omega\|_{L^{\infty}}\leq \sigma.$$
\end{lemm}
\begin{proof}
By virtue of Lemma \ref{Lemma5}, we deduce that
\begin{align}
\|\Omega\|_{L^{\infty}} \leq \|\Gamma\|_{L^{\infty}} + \|R\tau\|_{L^{\infty}} \leq \|\Gamma\|_{L^{\infty}} + C\|\tau\|_{B^0_{\infty,1}}.
\end{align}
It's follows from the proofs of Theorem \ref{th1} that
\begin{align*}
\|\Gamma\|_{L^{\infty}} \leq C\varepsilon \leq \frac{\sigma}{2},
\end{align*}
provided small enough $\varepsilon\leq\frac{\sigma}{4C}$.

To prove $C\|\tau\|_{B^0_{\infty,1}}\leq\frac{\sigma}{2}$, we now focus on high frequency estimate for $\tau$.
Applying $\Delta_j$ to $(\ref{eq2})_2$ with $j\geq 0$ yields
\begin{align}\label{taueq}
\partial_t\Delta_j\tau + 2\Delta_j\tau+\Delta_jQ(\Omega,\tau)=\Delta\Delta_j\tau -\Delta_j(u\cdot\nabla\tau).
\end{align}
Therefore
\begin{align}\label{ho4}
\Delta_j\tau = e^{-t(2+\Delta)}\Delta_j\tau_0 - \int_0^t e^{-2(t-s)}e^{(t-s)\Delta}\big(\Delta_jQ(\Omega,\tau) + \Delta_j(u\cdot\nabla\tau)\big)ds.
\end{align}
According to \eqref{ho4}, we deduce that
\begin{align}\label{ho5}
\sum_{j\geq 0}\|\Delta_{j}\tau\|_{L^{\infty}}
&\leq C\|\tau_0\|_{B^0_{\infty,1}}+\sum_{j\geq 0}\|\int_0^t e^{-2(t-s)}e^{(t-s)\Delta}\Delta_jQ(\Omega,\tau)ds\|_{L^{\infty}} \\ \notag
&~~~+\sum_{j\geq 0}\|\int_0^t e^{-2(t-s)}e^{(t-s)\Delta}\Delta_j(u\cdot\nabla\tau)ds\|_{L^{\infty}}.
\end{align}
According to Lemma \ref{Lemma1'} with $j\geq 0$, we infer that
\begin{align}\label{tauesti1}
\sum_{j\geq 0}\|\int_0^t e^{-2(t-s)}e^{(t-s)\Delta}\Delta_jQ(\Omega,\tau)ds\|_{L^{\infty}}
&\leq C\sum_{j\geq 0}\int_0^t e^{-2^{2j}(t-s)}\|\Delta_jQ(\Omega,\tau)\|_{L^{\infty}} ds \\ \notag
&\leq C\sum_{j\geq 0}\int_0^t e^{-2^{2j}(t-s)}2^{j}\|\Delta_jQ(\Omega,\tau)\|_{L^2} ds\\ \notag
&\leq C\sum_{j\geq 0}\int_0^t e^{-2^{2j}(t-s)}2^{\frac{3}{2}j}\|\nabla u\|_{L^2}\|\tau\|_{L^{\infty}} ds.
\end{align}
Similarly, by virtue of ${\rm div}~u=0$, we have
\begin{align}\label{tauesti2}
\sum_{j\geq 0}\|\int_0^t e^{-2(t-s)}e^{(t-s)\Delta}\Delta_j(u\cdot\nabla\tau)ds\|_{L^{\infty}}&\leq C\sum_{j\geq 0}\int_0^t e^{-2^{2j}(t-s)}2^{j}\|\Delta_j(u\otimes\tau)\|_{L^{\infty}}ds \\ \notag
&\leq C\sum_{j\geq 0}\int_0^t e^{-2^{2j}(t-s)}2^{\frac{3}{2}j}\|u\otimes\tau\|_{L^4}ds \\ \notag
&\leq C\sum_{j\geq 0}\int_0^t e^{-2^{2j}(t-s)}2^{\frac{3}{2}j}\|u\|_{H^1}\|\tau\|_{L^{\infty}}ds.
\end{align}
Notice that
\begin{align}
\|\tau\|_{L^{\infty}}\leq \|\tau_0\|_{L^{\infty}},
\end{align}
and
\begin{align}\label{sum}
\sup_{t\geq 0} \mathop{\sum}\limits_{j\in\mathcal{N}} \int_0^t e^{-2^{2j}(t-s)}2^{\frac{3}{2}j} ds \leq C.
\end{align}
According to \eqref{ho5}-\eqref{sum} and Proposition \ref{prop1}, we deduce that
\begin{align}
\|\tau\|_{B^0_{\infty,1}} &\leq \|\Delta_{-1}\tau\|_{L^{\infty}}+\sum_{j\geq 0}\|\Delta_{j}\tau\|_{L^{\infty}} \\ \notag
&\leq C\|\tau\|_{L^{\infty}}+C\|\tau_0\|_{B^0_{\infty,1}} + C(\|\nabla u\|_{L^{\infty}[0,T);L^2)}+\|(u_0,\tau_0)\|_{L^2})\|\tau_0\|_{L^{\infty}}\\ \notag
&\leq C(\varepsilon+\varepsilon^2)\leq\frac{\sigma}{2}.
\end{align}
We thus complete the proof of Lemma \ref{Omega}.
\end{proof}
{\bf The proof of Corollary \ref{th1'} :}  \\
By virtue of the lemmas above, we finally obtain global well-posedness of $(u,g)$.
Taking $L^2(\mathcal{L}^2)$ inner product with $g$ to  $(\ref{gequ})_2$, we deduce that
\begin{align}\label{ho6}
\frac 1 2\frac{d}{dt}\|g\|^2_{L^2(\mathcal{L}^2)} + \|\nabla g\|^2_{L^2(\mathcal{L}^2)} + \|\nabla_qg\|^2_{L^2(\mathcal{L}^2)} = 0.
\end{align}
Applying $\Lambda^s$ to $(\ref{gequ})_2$ and taking $L^2(\mathcal{L}^2)$ inner product with $\Lambda^sg$, we deduce from Lemma \ref{Lemma4} that
\begin{align}
&\frac 1 2\frac{d}{dt}\|\Lambda^sg\|^2_{L^2(\mathcal{L}^2)} + \|\Lambda^{s+1} g\|^2_{L^2(\mathcal{L}^2)} + \|\nabla_q\Lambda^sg\|^2_{L^2(\mathcal{L}^2)}\\ \notag
&\leq C_\varepsilon\|u\|^2_{L^{\infty}}\|\Lambda^sg\|^2_{L^2(\mathcal{L}^2)} + C_\varepsilon\|u\|^2_{H^s}\|g\|^2_{L^{\infty}(\mathcal{L}^2)}  + \varepsilon\|\Lambda^{s+1}g\|^2_{L^2(\mathcal{L}^2)}\\ \notag
&~~~+\int_{\mathbb{R}^{2}\times \mathbb{R}^{2}}\Lambda^s\big(\frac{1}{\psi_{\infty}}\nabla_q \cdot \big( \Omega q g \psi_{\infty}\big)\big)\Lambda^sg\psi_{\infty}dqdx.
\end{align}
According to Lemma \ref{Lemma7}, we deduce that
\begin{align*}
&\int_{\mathbb{R}^{2}\times \mathbb{R}^{2}}\Lambda^s\big(\frac{1}{\psi_{\infty}}\nabla_q \cdot \big( \Omega q g \psi_{\infty}\big)\big)\Lambda^sg\psi_{\infty}dqdx \\ \notag
&\leq \int_{\mathbb{R}^{2}}\|\Lambda^{s-1}\big(\Omega^{ik}q_i\nabla^k_qg + \Omega^{ik}q_iq_kg\big)\|_{L^2}\|\Lambda^{s+1}g\|_{L^2}\psi_{\infty}dq \\ \notag
&\leq  C_{\varepsilon}\|\Lambda^{s-1}\big(\Omega^{ik}q_i\nabla^k_qg + \Omega^{ik}q_iq_kg\big)\|^2_{L^2(\mathcal{L}^2)} + \varepsilon\|\Lambda^{s+1}g\|^2_{L^2(\mathcal{L}^2)} \\ \notag
&\leq C_{\varepsilon}\|\Omega\|^2_{L^{\infty}}\|\langle q\rangle\nabla_q\Lambda^{s-1}g\|^2_{L^2(\mathcal{L}^2)} + C_{\varepsilon}\|u\|^2_{H^s}\|\langle q\rangle\nabla_qg\|^2_{L^{\infty}(\mathcal{L}^2)} + \varepsilon\|\Lambda^{s+1}g\|^2_{L^2(\mathcal{L}^2)},
\end{align*}
which implies that
\begin{align}\label{ineq1'}
\frac{d}{dt}\|\Lambda^sg\|^2_{L^2(\mathcal{L}^2)} + \|\Lambda^{s+1} g\|^2_{L^2(\mathcal{L}^2)} + \|\nabla_q\Lambda^sg\|^2_{L^2(\mathcal{L}^2)}
&\leq C\|u\|^2_{L^{\infty}}\|\Lambda^sg\|^2_{L^2(\mathcal{L}^2)}\\ \notag
&~~~+ C\|u\|^2_{H^s}(\|g\|^2_{L^{\infty}(\mathcal{L}^2)}+\|\langle q\rangle\nabla_qg\|^2_{L^{\infty}(\mathcal{L}^2)})\\ \notag
&~~~+ C\|\Omega\|^2_{L^{\infty}}\|\langle q\rangle\nabla_qg\|^2_{H^{s-1}(\mathcal{L}^2)}.
\end{align}
The appearance of the term $\|\langle q\rangle\nabla_qg\|^2_{H^{s-1}(\mathcal{L}^2)}$ force us to consider mixed derivative estimates which have been used in \cite{HookeGlobal} and \cite{2017Global}. 

Applying $\Lambda^m$ to  $(\ref{gequ})_2$ with $m\in\{0,s\}$ and taking $L^2(\mathcal{L}^2)$ inner product with $\langle q\rangle^2\Lambda^mg$, we infer from Lemmas \ref{Lemma4} and \ref{Lemma7} that
\begin{align}
&\frac{d}{dt}\|\langle q\rangle g\|^2_{L^2(\mathcal{L}^2)} + \|\langle q\rangle \nabla g\|^2_{L^2(\mathcal{L}^2)} + \|\langle q\rangle \nabla_qg\|^2_{L^2(\mathcal{L}^2)} \leq C\|g\|^2_{L^2(\mathcal{L}^2)},
\end{align}
and
\begin{align}
&\frac{d}{dt}\|\langle q\rangle\Lambda^sg\|^2_{L^2(\mathcal{L}^2)} + \|\langle q\rangle\Lambda^{s+1} g\|^2_{L^2(\mathcal{L}^2)} + \|\langle q\rangle\nabla_q\Lambda^sg\|^2_{L^2(\mathcal{L}^2)}\\ \notag
&\leq  C\|u\|^2_{L^{\infty}}\|\langle q\rangle\Lambda^sg\|^2_{L^2(\mathcal{L}^2)}+ C\|\Lambda^sg\|^2_{L^2(\mathcal{L}^2)} \\ \notag
&~~~+ C\|u\|^2_{H^s}\big(\|g\|^2_{L^{\infty}(\mathcal{L}^2)} + \|\langle q\rangle^2\nabla_qg\|^2_{L^{\infty}(\mathcal{L}^2)}\big)\\ \notag
&~~~+ C\|\Omega\|^2_{L^{\infty}}\|\langle q\rangle\nabla^2_q\Lambda^{s-1}g\|^2_{L^2(\mathcal{L}^2)} .
\end{align}

Applying $\nabla_q\Lambda^m$ to  $(\ref{gequ})_2$ with $m\in\{0,s-1\}$ and taking $L^2(\mathcal{L}^2)$ inner product with $\langle q\rangle^2\nabla_q\Lambda^mg$, we infer from Lemmas \ref{Lemma4} and \ref{Lemma7} that
\begin{align}
&\frac{d}{dt}\|\langle q\rangle\nabla_q g\|^2_{L^2(\mathcal{L}^2)} + \|\langle q\rangle \nabla_q \nabla g\|^2_{L^2(\mathcal{L}^2)} + \|\langle q\rangle \nabla^2_qg\|^2_{L^2(\mathcal{L}^2)} \leq C\|\langle q\rangle \nabla_q g\|^2_{L^2(\mathcal{L}^2)},
\end{align}
and
\begin{align}
&\frac{d}{dt}\|\langle q\rangle\nabla_q\Lambda^{s-1}g\|^2_{L^2(\mathcal{L}^2)} + \|\langle q\rangle\nabla_q\Lambda^s g\|^2_{L^2(\mathcal{L}^2)} + \|\langle q\rangle\nabla^2_q\Lambda^{s-1}g\|^2_{L^2(\mathcal{L}^2)}\\ \notag
&\leq C\|\langle q\rangle\nabla_q\Lambda^{s-1}g\|^2_{L^2(\mathcal{L}^2)} + C\|u\|^2_{L^{\infty}}\|\langle q\rangle\nabla_q\Lambda^{s-1}g\|^2_{L^2(\mathcal{L}^2)} \\ \notag
&~~~+ C\|u\|^2_{H^s}\big(\|g\|^2_{L^{\infty}(\mathcal{L}^2)} + \|\nabla_q g\|^2_{L^{\infty}(\mathcal{L}^2)} + \|\langle q\rangle\nabla^2_qg\|^2_{L^{\infty}(\mathcal{L}^2)}\big)\\ \notag
&~~~+ C\|\Omega\|^2_{L^{\infty}}\big(\|\nabla_q\Lambda^{s-1}g\|^2_{L^2(\mathcal{L}^2)} + \|\langle q\rangle\nabla^2_q\Lambda^{s-1}g\|^2_{L^2(\mathcal{L}^2)}\big).
\end{align}

For $u$, we have the following estimate
\begin{align}\label{ineq2'}
\frac{d}{dt}\|u\|^2_{H^s} \leq C(\|\nabla u\|_{L^{\infty}}+1)\|u\|^2_{H^s} + C\|g\|^2_{H^{s+1}(\mathcal{L}^2)}.
\end{align}
According to \eqref{ho6}-\eqref{ineq2'}, Lemma \ref{Omega} with $\sigma$ small enough and Gronwall's inequality, we deduce that
\begin{align}
&\|u\|_{H^s} + \|\langle q\rangle g\|^2_{H^s(\mathcal{L}^2)} + \|\langle q\rangle\nabla_q g\|^2_{H^{s-1}(\mathcal{L}^2)} \\ \notag
&\leq C\big(\|u_0\|^2_{H^s} + \|\langle q\rangle g_0\|^2_{H^s(\mathcal{L}^2)} + \|\langle q\rangle\nabla_q g_0\|^2_{H^{s-1}(\mathcal{L}^2)}\big)e^{Ct+C\int_{0}^{t}\|u\|^2_{L^{\infty}}+\|\nabla u\|_{L^{\infty}}dt'}.
\end{align}
By virtue of Theorem~\ref{th1}, we finish the proof of Corollary \ref{th1'}.
\hfill$\Box$

\begin{rema}
The main difficulty for the global estimates of  $(\ref{gequ})_2$ is that once we stop the growth of regularity in $x$ by $-\Delta g$, we can not stop the growth of power $\langle q\rangle$ by $\mathcal{L}g=-\frac{1}{\psi_{\infty}}\nabla_q \cdot \big( \nabla_qg \psi_{\infty}\big)$ at the same time. It is worth mentioning that the estimate of $\|\langle q\rangle\nabla_qg\|^2_{H^{s-1}(\mathcal{L}^2)}$ instead of $\|\langle q\rangle\nabla_qg\|^2_{H^{s}(\mathcal{L}^2)}$ enable us to stop the growth of power $\langle q\rangle$ caused by the term $\frac{1}{\psi_{\infty}}\nabla_q \cdot \big( \Omega q g \psi_{\infty}\big)$. As a result, we obtain the a prior estimate of $\|\langle q\rangle g\|^2_{H^s(\mathcal{L}^2)}$ and $\|\langle q\rangle\nabla_qg\|^2_{H^{s-1}(\mathcal{L}^2)}$.
\end{rema}
\begin{rema}
The estimate of $\langle q\rangle^{n}\nabla^m_qg$ in $L^{\infty}(\mathcal{L}^2)$ and the smallness of $\|\Omega\|_{L^{\infty}}$ are significant in the proof of Corollary \ref{th1'}. Global existence of \eqref{gequ} for arbitrary initial data and global existence of the noncorotation inviscid Hooke model are interesting problems. We are going to study these problems in the future.
\end{rema}
\section{Global solutions for the co-rotation case in critical Besov space}
In this section, we are concerned with global solutions to the co-rotation inviscid Oldroyd-B model in critical Besov space. We
divide it into three subsections to prove Theorem \ref{th13}.
\subsection{Energy estimates}
From now on, we prove the boundness of smooth solutions for \eqref{eq2} in the following propositions.
\begin{prop}\label{7lemm1}
Suppose $(u,\tau)$ is a smooth solution to \eqref{eq2} with $u_0\in H^1$ and $\tau_0\in H^1\cap L^{\infty}$. Then we obtain
\begin{align}\label{7ineq0}
\|\nabla u\|^2_{L^2}+\|\nabla\tau\|^2_{L^2} \leq (\|\nabla u_0\|^2_{L^2}+\|\nabla\tau_0\|^2_{L^2}) e^{\frac{C}{a\mu}+\frac{C}{a\mu}\|\tau_0\|^2_{L^{\infty}}+C\mu^{-2}\|\tau_0\|^2_{L^2}}.
\end{align}
Moreover, we get
\begin{align}\label{7ineq1}
\|(u,\tau)\|^2_{H^1} \leq H_0,
\end{align}
where $H_0=\|(u_0,\tau_0)\|^2_{H^1} e^{\frac{C}{a\mu}+\frac{C}{a\mu}\|\tau_0\|^2_{L^{\infty}}+C\mu^{-2}\|\tau_0\|^2_{L^2}}$.
\end{prop}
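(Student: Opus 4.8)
The plan is to run an energy argument on two levels: first harvest the $L^2$-level information, which for the co-rotation system is unusually strong, and then close a $\dot H^1$ estimate by a Gronwall inequality whose time-integrated coefficient is \emph{finite} — this finiteness being precisely the ``exponential dissipation of $\tau$'' that the introduction refers to. At the $L^2$ level, testing $\eqref{eq2}_2$ with $\tau$ reproduces the identity \eqref{eq3}--\eqref{eq4}, namely $e^{2at}\|\tau\|_{L^2}^2+2\mu\int_0^t e^{2as}\|\nabla\tau\|_{L^2}^2\,ds=\|\tau_0\|_{L^2}^2$, whence $\|\tau(t)\|_{L^2}\le\|\tau_0\|_{L^2}e^{-at}$ and $\int_0^t\|\nabla\tau\|_{L^2}^2\,ds\le\tfrac1{2\mu}\|\tau_0\|_{L^2}^2$; Proposition \ref{7lemm1'} with $p=\infty$ adds $\|\tau(t)\|_{L^\infty}\le\|\tau_0\|_{L^\infty}e^{-at}$, hence $\int_0^t\|\tau\|_{L^\infty}^2\,ds\le\tfrac1{2a}\|\tau_0\|_{L^\infty}^2$; and testing $\eqref{eq2}_1$ with $u$ gives the $L^2$ bound \eqref{ineq1} for $u$ (only $u_0\in H^1$ is needed here). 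These are the only a priori controlled quantities I will use.

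For the $\dot H^1$ estimate, I would test $\eqref{eq2}_1$ with $-\Delta u$: the pressure term and the convective term $\langle u\cdot\nabla u,\Delta u\rangle$ (which vanishes for divergence-free $u$ in $d=2$, as already used before \eqref{ineq8}) drop out, leaving $\tfrac12\tfrac{d}{dt}\|\nabla u\|_{L^2}^2=\langle\mathrm{div}\,\tau,-\Delta u\rangle$; and test $\eqref{eq2}_2$ with $-\Delta\tau$ to get $\tfrac12\tfrac{d}{dt}\|\nabla\tau\|_{L^2}^2+a\|\nabla\tau\|_{L^2}^2+\mu\|\nabla^2\tau\|_{L^2}^2=\langle u\cdot\nabla\tau,\Delta\tau\rangle+\langle Q(\Omega,\tau),\Delta\tau\rangle$. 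The three nonlinear/coupling terms would be handled as follows. After one integration by parts, $\langle u\cdot\nabla\tau,\Delta\tau\rangle=-\int\partial_lu^k\,\partial_k\tau^{ij}\,\partial_l\tau^{ij}$, bounded by $\|\nabla u\|_{L^2}\|\nabla\tau\|_{L^4}^2\le C\|\nabla u\|_{L^2}\|\nabla\tau\|_{L^2}\|\nabla^2\tau\|_{L^2}$ by the two-dimensional Gagliardo--Nirenberg inequality (Lemma \ref{Lemma3}). For the rotation term, $\|Q(\Omega,\tau)\|_{L^2}\le C\|\tau\|_{L^\infty}\|\Omega\|_{L^2}\le C\|\tau\|_{L^\infty}\|\nabla u\|_{L^2}$, so $\langle Q(\Omega,\tau),\Delta\tau\rangle\le C\|\tau\|_{L^\infty}\|\nabla u\|_{L^2}\|\nabla^2\tau\|_{L^2}$ — note the decaying factor $\|\tau\|_{L^\infty}$. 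For the coupling term $\langle\mathrm{div}\,\tau,-\Delta u\rangle$ I would use $R=\Delta^{-1}\mathrm{curl}\,\mathrm{div}$ (so that $\mathrm{curl}\,\mathrm{div}\,\tau=\Delta R\tau$ and $\|\nabla R\tau\|_{L^2}\le C\|\nabla\tau\|_{L^2}$) together with $\Gamma=\mu\Omega-R\tau$ from \eqref{eq6} and the commutator bounds of Lemma \ref{Lemma2}, decomposing it into a self-interaction piece controlled by $\tfrac{C}{\mu}\|\nabla\tau\|_{L^2}^2$ and a remainder that Young's inequality absorbs into $\mu\|\nabla^2\tau\|_{L^2}^2$.

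Adding the two identities, absorbing every $\|\nabla^2\tau\|_{L^2}^2$ contribution into the parabolic term, and writing $E(t)=\|\nabla u\|_{L^2}^2+\|\nabla\tau\|_{L^2}^2$, I would reach $\tfrac{d}{dt}E\le g(t)\,E$ with $g$ essentially of the form $\tfrac{C}{\mu}\|\tau\|_{L^\infty}^2+\tfrac{C}{\mu}\|\nabla\tau\|_{L^2}^2$ (plus exponentially small remainders). By the first step, $\int_0^t g\le\tfrac{C}{a\mu}\bigl(1+\|\tau_0\|_{L^\infty}^2\bigr)+\tfrac{C}{\mu^2}\|\tau_0\|_{L^2}^2$, which is finite and independent of $t$; keeping track of the constants this is exactly the exponent appearing in \eqref{7ineq0}, and Gronwall's inequality then gives \eqref{7ineq0}. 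Estimate \eqref{7ineq1} follows by adding to it the $L^2$ bounds of the first step and enlarging the constant, since $\|(u,\tau)\|_{H^1}^2=\|u\|_{L^2}^2+\|\tau\|_{L^2}^2+\|\nabla u\|_{L^2}^2+\|\nabla\tau\|_{L^2}^2$.

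The step I expect to be the real obstacle is the coupling term $\langle\mathrm{div}\,\tau,-\Delta u\rangle$ in the velocity equation. When the constitutive law contains an $\alpha D(u)$ forcing, this term cancels against the corresponding $\langle D(u),\Delta\tau\rangle$ in the stress equation; in the co-rotation model that term is absent, so there is no cancellation, and the naive bound $\|\nabla^2\tau\|_{L^2}\|\nabla u\|_{L^2}$ leaves, after Young's inequality, a $\tfrac{C}{\mu}\|\nabla u\|_{L^2}^2$ contribution whose time integral grows linearly and would wreck the uniform-in-$t$ bound. Turning this term into an integrable-in-time quantity — via the $R$/$\Gamma$ structure and the exponential decay of $\tau$ in $L^2$ and $L^\infty$, which is itself the consequence of the absence of $D(u)$ in the co-rotation constitutive law — is the heart of the matter, and is what produces \eqref{7ineq0} with the precise constants stated.
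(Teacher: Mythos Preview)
Your $L^2$-level setup and the handling of $\langle u\cdot\nabla\tau,\Delta\tau\rangle$ and $\langle Q(\Omega,\tau),\Delta\tau\rangle$ are fine and match the paper. The gap is exactly where you suspected: the coupling term $\langle\mathrm{div}\,\tau,-\Delta u\rangle$. Your proposed $R/\Gamma$ decomposition is circular. Writing $-\langle\mathrm{curl}\,\mathrm{div}\,\tau,\Omega\rangle=-\tfrac1\mu\langle\Delta R\tau,\Gamma\rangle-\tfrac1\mu\langle\Delta R\tau,R\tau\rangle$ does isolate the self-interaction $\tfrac1\mu\|\nabla R\tau\|_{L^2}^2\le\tfrac{C}{\mu}\|\nabla\tau\|_{L^2}^2$, but the ``remainder'' $\tfrac1\mu\langle\Delta R\tau,\Gamma\rangle$ is bounded by $\tfrac{C}{\mu}\|\nabla^2\tau\|_{L^2}\|\Gamma\|_{L^2}$, and since $\|\Gamma\|_{L^2}\le\mu\|\Omega\|_{L^2}+C\|\tau\|_{L^2}$, after Young you recover precisely the $\tfrac{C}{\mu}\|\nabla u\|_{L^2}^2$ term with constant coefficient that you were trying to avoid. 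The commutator Lemma~\ref{Lemma2} is irrelevant here; it concerns $[R,u\cdot\nabla]\tau$, which does not appear in this inner product.

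The paper's device is different and is what you are missing: introduce the exponential weight $\tilde\tau=e^{\frac{a}{2}t}\tau$ and run the $\dot H^1$ estimate for the pair $(\nabla u,\nabla\tilde\tau)$. The $\tau$-equation then yields the dissipation $\mu\|\nabla^2\tilde\tau\|_{L^2}^2$, while the coupling term, bounded na\"ively by $\|\nabla^2\tau\|_{L^2}\|\nabla u\|_{L^2}=e^{-\frac{a}{2}t}\|\nabla^2\tilde\tau\|_{L^2}\|\nabla u\|_{L^2}$, now carries an exponentially decaying factor. Young's inequality gives $\tfrac{\mu}{3}\|\nabla^2\tilde\tau\|_{L^2}^2+\tfrac{3}{\mu}e^{-at}\|\nabla u\|_{L^2}^2$: the first piece is absorbed into the (weighted) parabolic dissipation, and the second has the integrable coefficient $\tfrac{3}{\mu}e^{-at}$, whose time integral is $\tfrac{3}{a\mu}$ --- this is the origin of the $\tfrac{6}{a\mu}$ in the exponent of \eqref{7ineq0}. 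The remaining coefficients $\tfrac{6}{a\mu}\|\tau_0\|_{L^\infty}^2$ and $3\mu^{-2}\|\tau_0\|_{L^2}^2$ then come, exactly as you argued, from $\int_0^t\|\tilde\tau\|_{L^\infty}^2\,ds\le\tfrac1a\|\tau_0\|_{L^\infty}^2$ and $\int_0^t\|\nabla\tilde\tau\|_{L^2}^2\,ds=\int_0^te^{as}\|\nabla\tau\|_{L^2}^2\,ds\le\tfrac1{2\mu}\|\tau_0\|_{L^2}^2$ (the latter via \eqref{eq4}). In short, the resolution is not the $\Gamma$ structure but the exponential reweighting of $\tau$, which converts the bad constant coefficient on $\|\nabla u\|_{L^2}^2$ into an integrable one.
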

\begin{proof}
Taking the $L^2$ inner product with $-\Delta\tau$ to $(\ref{eq2})_2$ and using Lemma \ref{Lemma3}, we have
\begin{align}\label{7ineq2}
\frac{1}{2}\frac{d}{dt}\|\nabla\tau\|^2_{L^2} + a\|\nabla\tau\|^2_{L^2} + \mu\|\nabla^2\tau\|^2_{L^2}
&=\langle u\cdot\nabla\tau,\Delta\tau\rangle + \langle Q(\Omega,\tau),\Delta\tau\rangle \\ \notag
&\leq C\|\nabla u\|_{L^2}\|\nabla\tau\|^2_{L^4} + C\|\Omega\|_{L^2}\|\tau\|_{L^{\infty}}\|\Delta\tau\|_{L^2} \\ \notag
&\leq C_\mu\|\nabla u\|^2_{L^2}(\|\nabla\tau\|^2_{L^2}+\|\tau\|^2_{L^{\infty}})+\frac{2\mu}{3}\|\nabla^2\tau\|^2_{L^2}.
\end{align}
Consider $\tilde{\tau}^{ij}=\tau^{ij}e^{\frac{a}{2}t}$, we infer from \eqref{7ineq2} that
\begin{align}\label{7ineq3}
\frac{1}{2}\frac{d}{dt}\|\nabla\tilde{\tau}\|^2_{L^2} + \mu\|\nabla^2\tilde{\tau}\|^2_{L^2} \leq C_\mu\|\nabla u\|^2_{L^2}(\|\nabla\tilde{\tau}\|^2_{L^2}+\|\tilde{\tau}\|^2_{L^{\infty}})+\frac{2\mu}{3}\|\nabla^2\tilde{\tau}\|^2_{L^2}.
\end{align}
Taking the $L^2$ inner product with $-\Delta u$ to $(\ref{eq2})_1$, we have
\begin{align}\label{7ineq4}
\frac{1}{2}\frac{d}{dt}\|\nabla u\|^2_{L^2} \leq C_\mu e^{-at}\|\nabla u\|^2_{L^2}+\frac{\mu}{3}\|\nabla^2 \tilde{\tau}\|^2_{L^2}.
\end{align}
Combining $(\ref{7ineq3})$ and $(\ref{7ineq4})$, we deduce that
\begin{align}\label{7ineq5}
&\frac{1}{2}\frac{d}{dt}(\|\nabla u\|^2_{L^2}+\|\nabla\tilde{\tau}\|^2_{L^2}) \leq C_\mu\|\nabla u\|^2_{L^2}(e^{-at}+\|\nabla\tilde{\tau}\|^2_{L^2}+\|\tilde{\tau}\|^2_{L^{\infty}}).
\end{align}
Applying Gronwall's inequality to \eqref{7ineq5} and using Propositions \ref{7lemm1'}, \ref{prop1}, we obtain
\begin{align}\label{7ineq6}
\|\nabla u\|^2_{L^2}+\|\nabla\tilde{\tau}\|^2_{L^2} &\leq (\|\nabla u_0\|^2_{L^2}+\|\nabla\tau_0\|^2_{L^2}) e^{C_\mu\int_0^te^{-as}+\|\nabla\tilde{\tau}\|^2_{L^2}+\|\tilde{\tau}\|^2_{L^{\infty}}ds}\\ \notag
&\leq (\|\nabla u_0\|^2_{L^2}+\|\nabla\tau_0\|^2_{L^2}) e^{\frac{C}{a\mu}+\frac{C}{a\mu}\|\tau_0\|^2_{L^{\infty}}+C_\mu\int_0^te^{2as}\|\nabla\tau\|^2_{L^2}ds}\\
\notag
&\leq (\|\nabla u_0\|^2_{L^2}+\|\nabla\tau_0\|^2_{L^2}) e^{\frac{C}{a\mu}+\frac{C}{a\mu}\|\tau_0\|^2_{L^{\infty}}+C\mu^{-2}\|\tau_0\|^2_{L^2}}.
\end{align}
Combining \eqref{7ineq6} and \eqref{ineq1}, we finish the proof of Proposition \ref{7lemm1}.
\end{proof}

Then, we consider time integrability of $\tau$. The estimate with exponential weight in the following proposition is the key to estimating $\Gamma$ in $B^0_{\infty,1}$.
\begin{prop}\label{7lemm2}
Suppose $(u,\tau)$ is a smooth solution to \eqref{eq2} with $u_0\in H^1$ and $\tau_0\in H^1\cap L^{\infty}$. Then we obtain
\begin{align}
\|e^{\frac{a}{2}s}\tau\|_{\tilde{L}^{2}_T(B^{\frac{1}{2}}
	_{\infty,\infty})}
\leq B_0,
\end{align}
where $B_0=C(a^{-\frac{1}{2}}+\mu^{-\frac{1}{2}})\|\tau_0\|_{L^4}+ C\mu^{-1}a^{-\frac{1}{2}}H^{\frac 1 2}_0\|\tau_0\|_{L^{\infty}}$.
\end{prop}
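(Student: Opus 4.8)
The plan is to interpolate between the already-established $L^2_t\dot H^1$ control of $\tau$ and an $L^\infty_t L^4$ (or $L^\infty_t L^\infty$) bound, since $B^{1/2}_{\infty,\infty}$ sits, by Bernstein-type embeddings, between $L^4$ and $\dot H^1$ on $\mathbb R^2$. Concretely, I would write $\|\tau\|_{B^{1/2}_{\infty,\infty}} = \sup_j 2^{j/2}\|\Delta_j\tau\|_{L^\infty}$ and split the supremum at a frequency threshold. For the low frequencies $j\le N$, Bernstein gives $2^{j/2}\|\Delta_j\tau\|_{L^\infty}\lesssim 2^{j}\|\Delta_j\tau\|_{L^4}\lesssim 2^{j}\|\tau\|_{L^4}$; for the high frequencies $j\ge N$, Bernstein on $\mathbb R^2$ gives $2^{j/2}\|\Delta_j\tau\|_{L^\infty}\lesssim 2^{j/2}\cdot 2^{j/2}\|\Delta_j\tau\|_{L^2}=2^{j}\|\Delta_j\tau\|_{L^2}\lesssim 2^{j}\cdot 2^{-j}\|\nabla\Delta_j\tau\|_{L^2}$... — more cleanly, I would just invoke the standard embedding $\dot B^{1/2}_{\infty,\infty}$ is reached by real interpolation of $L^4=\dot B^0_{4,2}$-type and $\dot H^1=\dot B^1_{2,2}$-type spaces, optimizing the split point $N$ so that $2^{j/2}\|\Delta_j\tau\|_{L^\infty}\lesssim \|\tau\|_{L^4}^{1/2}\|\nabla\tau\|_{L^2}^{1/2}$ uniformly in $j$. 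This yields the pointwise-in-time bound
\begin{align}\label{interp7lemm2}
\|\tau\|_{B^{1/2}_{\infty,\infty}} \leq C\|\tau\|_{L^4}^{1/2}\|\nabla\tau\|_{L^2}^{1/2}.
\end{align}

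Next I would insert the exponential weight. Multiplying \eqref{interp7lemm2} by $e^{\frac a2 s}$ and distributing the weight as $e^{\frac a2 s}=e^{\frac a4 s}\cdot e^{\frac a4 s}$ across the two factors, I get $e^{\frac a2 s}\|\tau\|_{B^{1/2}_{\infty,\infty}}\leq C\big(e^{\frac a2 s}\|\tau\|_{L^4}\big)^{1/2}\big(e^{\frac a2 s}\|\nabla\tau\|_{L^2}\big)^{1/2}$. Taking $L^2_T$ in time and applying Cauchy--Schwarz,
\begin{align}\label{CS7lemm2}
\|e^{\frac a2 s}\tau\|_{L^2_T B^{1/2}_{\infty,\infty}} \leq C\,\|e^{\frac a2 s}\tau\|_{L^\infty_T L^4}^{1/2}\,\Big(\int_0^T e^{as}\|\nabla\tau\|^2_{L^2}\,ds\Big)^{1/4}.
\end{align}
For the second factor, Proposition \ref{prop1} (estimate \eqref{ineq1}, after rewriting $e^{2as}\|\tau\|_{L^2}^2+2\mu\int_0^t e^{2as}\|\nabla\tau\|_{L^2}^2 ds=\|\tau_0\|_{L^2}^2$, and noting $e^{as}\le e^{2as}$) gives $\int_0^T e^{as}\|\nabla\tau\|^2_{L^2}ds\le \frac1{2\mu}\|\tau_0\|^2_{L^2}$; actually I will use the cruder $\int_0^T e^{as}\|\nabla\tau\|_{L^2}^2\,ds \le \mu^{-1} a^{-1}\|\tau_0\|_{L^\infty}^2\cdot(\text{something})$ — more precisely I will bound $\|\nabla\tau\|_{L^2}^2$ via \eqref{7ineq1} by $H_0$ and integrate $e^{as}$ only if needed, but the clean route is Proposition \ref{prop1}. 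For the first factor, Proposition \ref{7lemm1'} with $p=4$ gives $\|\tau\|_{L^4}\le \|\tau_0\|_{L^4}e^{-as}$, hence $e^{\frac a2 s}\|\tau\|_{L^4}\le \|\tau_0\|_{L^4}e^{-\frac a2 s}\le \|\tau_0\|_{L^4}$, so $\|e^{\frac a2 s}\tau\|_{L^\infty_T L^4}\le \|\tau_0\|_{L^4}$. Plugging these into \eqref{CS7lemm2} produces a bound of the shape $C\mu^{-1/2}\|\tau_0\|_{L^4}^{1/2}(\cdots)$; to match the stated $B_0 = \mu^{-1/2}\|\tau_0\|_{L^4} + \mu^{-1}a^{-1/2}H_0^{1/2}\|\tau_0\|_{L^\infty}$, I expect one should instead split \eqref{interp7lemm2} unevenly — using the $L^\infty$ bound $\|\tau\|_{L^\infty}\le H_0^{1/2}$-type control from Proposition \ref{7lemm1} together with Agmon/Gagliardo--Nirenberg $\|\tau\|_{L^\infty}\lesssim \|\tau\|_{L^2}^{1/2}\|\tau\|_{H^2}^{1/2}$ — so that one factor carries $\|\tau_0\|_{L^4}$ (from the low modes) and the other carries $H_0^{1/2}\|\tau_0\|_{L^\infty}$ with the $\mu^{-1}a^{-1/2}$ weight coming from $\int_0^T e^{as}\|\nabla^2\tau\|_{L^2}^2\,ds$ controlled via Corollary \ref{coro1} and Proposition \ref{7lemm1}. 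I would carry the two pieces of the Littlewood--Paley sum separately: the sum over $j\le N$ contributes the $\mu^{-1/2}\|\tau_0\|_{L^4}$ term directly, and the sum over $j>N$ contributes the $\mu^{-1}a^{-1/2}H_0^{1/2}\|\tau_0\|_{L^\infty}$ term, then add.

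The main obstacle I anticipate is bookkeeping the exponential weights so the time integrals converge and reproduce exactly the powers $\mu^{-1/2}$, $\mu^{-1}a^{-1/2}$ appearing in $B_0$: one has to be careful that $e^{\frac a2 s}\tau$ in $L^2_T$ requires genuine exponential decay of $\|\tau\|_{L^4}$ and $\|\nabla\tau\|_{L^2}$ at a rate strictly faster than $\frac a2$, which is exactly what Propositions \ref{7lemm1'} and \ref{prop1} supply (decay rates $a$ and $a$ in $L^2$, respectively), so the margin is tight but sufficient. A secondary technical point is that $B^{1/2}_{\infty,\infty}$ is only a Besov—not a Sobolev—space, so rather than a clean Sobolev embedding one must run the frequency-splitting argument by hand as above; since the proposition only asks for the $\sup_j$ (the $\ell^\infty$ in $j$) rather than an $\ell^1$ sum, no extra logarithmic loss appears and the splitting closes cleanly.
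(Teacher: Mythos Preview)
Your approach has a genuine gap: the interpolation inequality
\[
\|\tau\|_{B^{1/2}_{\infty,\infty}} \leq C\|\tau\|_{L^4}^{1/2}\|\nabla\tau\|_{L^2}^{1/2}
\]
is \emph{false} in $\mathbb R^2$ by scaling. The space $\dot B^{1/2}_{\infty,\infty}$ has scaling exponent $\tfrac12$, whereas $L^4$ has scaling exponent $-\tfrac12$ and $\dot H^1$ has scaling exponent $0$ in two dimensions; you cannot interpolate \emph{upward} in regularity. Concretely, take $\tau=\Delta_j f$ supported on a single dyadic shell with $\|\Delta_j f\|_{L^2}=1$: then $\|\tau\|_{L^4}\sim 2^{j/2}$, $\|\nabla\tau\|_{L^2}\sim 2^{j}$, but $\|\tau\|_{B^{1/2}_{\infty,\infty}}\sim 2^{3j/2}$, so your inequality would read $2^{3j/2}\lesssim 2^{3j/4}$, which fails for large $j$. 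The frequency-splitting you sketch does not close because \emph{neither} the $L^4$ branch nor the $\dot H^1$ branch yields a bound on $2^{j/2}\|\Delta_j\tau\|_{L^\infty}$ that is uniform (let alone decaying) in $j$; both grow like $2^{j/2}$ at best.

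What the paper actually does is exploit the \emph{parabolic smoothing} of the diffusion $\mu\Delta\tau$, which is the only available source of the missing half-derivative. One writes $\tilde\tau=e^{at}\tau$, applies $\Delta_j$, and uses Duhamel:
\[
\Delta_j\big(e^{\frac a2 t}\tau\big)=e^{\mu t\Delta-\frac a2 t}\Delta_j\tau_0-\int_0^t e^{\mu(t-s)\Delta-\frac a2 t}\Delta_j\big(\operatorname{div}(u\tilde\tau)+Q(\Omega,\tilde\tau)\big)\,ds.
\]
The heat factor contributes $e^{-c\mu 2^{2j}t}$ on each block (Lemma~\ref{Lemma1'}), and it is precisely the time integral $\int 2^{2j}e^{-c\mu 2^{2j}(\cdot)}\sim\mu^{-1}$ that absorbs the high-frequency weights coming from Bernstein. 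The linear piece then gives $\mu^{-1/2}\|\tau_0\|_{L^4}$, and the Duhamel piece, after bounding $\|u\|_{H^1}\le H_0^{1/2}$ (Proposition~\ref{7lemm1}) and $\|\tilde\tau\|_{L^\infty}\le\|\tau_0\|_{L^\infty}$ (Proposition~\ref{7lemm1'}), gives $\mu^{-1}a^{-1/2}H_0^{1/2}\|\tau_0\|_{L^\infty}$. No amount of static interpolation between Lebesgue/Sobolev norms of $\tau$ can replace this smoothing step.
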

\begin{proof}
To prove $\|e^{\frac{a}{2}s}\tau\|_{\tilde{L}^{2}_T(B^{\frac{1}{2}}_{\infty,\infty})} \leq B_0$, we now focus on high frequency estimate for $\tau$.
Set $\tilde{\tau}=\tau e^{at}$, we infer from $(\ref{eq2})_2$ that
\begin{align}\label{7ineq7}
\partial_t\tilde{\tau} + u\cdot\nabla\tilde{\tau} + Q(\Omega,\tilde{\tau}) = \mu\Delta\tilde{\tau}.
\end{align}
Applying $\Delta_j$ to \eqref{7ineq7} with $j\geq 0$, we obtain
\begin{align}\label{7ineq8}
\partial_t\Delta_j\tilde{\tau} - \mu\Delta_j\Delta\tilde{\tau} =-\Delta_j{\rm div}~(u\tilde{\tau})-\Delta_jQ(\Omega,\tilde{\tau}),
\end{align}
which implies that
\begin{align}\label{7ineq9}
\Delta_je^{\frac{a}{2}t}\tau =e^{\mu t\Delta-\frac{a}{2}t}\Delta_j\tau_0-\int_0^te^{\mu(t-s)\Delta-\frac{a}{2}t}\big(\Delta_j{\rm div}~(u\tilde{\tau})+\Delta_jQ(\Omega,\tilde{\tau})\big)ds.
\end{align}
From \eqref{7ineq9}, we obtain
\begin{align}\label{7ineq9'}
(\int_0^t 2^{j}\|\Delta_j e^{\frac{a}{2}s}\tau\|^2_{L^{\infty}} ds)^{\frac{1}{2}}
&\leq (\int_0^t 2^{j}\|e^{-\frac{a}{2}s}e^{\mu s\Delta}\Delta_j\tau_0\|^2_{L^{\infty}} ds)^{\frac{1}{2}}\\ \notag
&~~~+(\int_0^t 2^{j}\|\int_0^s e^{-\frac{a}{2}s}e^{\mu(s-t')\Delta}\Delta_j{\rm div}~(u\tilde{\tau})dt'\|^2_{L^{\infty}} ds)^{\frac{1}{2}}\\ \notag
&~~~+(\int_0^t 2^{j}\|\int_0^s e^{-\frac{a}{2}s}e^{\mu(s-t')\Delta}\Delta_jQ(\Omega,\tilde{\tau})dt'\|^2_{L^{\infty}} ds)^{\frac{1}{2}}.
\end{align}
According to Lemma \ref{Lemma1'} with $j\geq 0$ and Proposition \ref{7lemm1'}, we deduce that
\begin{align}\label{7ineq10}
(\int_0^t 2^{j}\|e^{-\frac{a}{2}s}e^{\mu s\Delta}\Delta_j\tau_0\|^2_{L^{\infty}} ds)^{\frac{1}{2}} &\leq C(\int_0^t e^{-\mu2^{2j}s-as}2^{2j}\|\Delta_j\tau_0\|^2_{L^{4}}ds )^{\frac{1}{2}}\\ \notag
 &\leq C\|\tau_0\|_{L^{4}} (\int_0^t e^{-\mu2^{2j}s-as}2^{2j}ds)^{\frac{1}{2}} \\ \notag
 &\leq C\mu^{-\frac{1}{2}}\|\tau_0\|_{L^{4}} .
\end{align}
By virtue of Minkowski's inequality, Lemma \ref{Lemma1'} and Proposition \ref{7lemm1}, we infer that
\begin{align}\label{7ineq11}
&(\int_0^t 2^{j}\|\int_0^s e^{-\frac{a}{2}s}e^{\mu(s-t')\Delta}\big(\Delta_j{\rm div}~(u\tilde{\tau})+\Delta_jQ(\Omega,\tilde{\tau})\big)dt'\|^2_{L^{\infty}} ds)^{\frac{1}{2}} \\ \notag
&\leq C(\int_0^t e^{-as}(\int_0^s 2^{\frac{ j}{2}}e^{-\mu2^{2j}(s-t')}\big(\|\Delta_j{\rm div}~(u\tilde{\tau})\|_{L^{\infty}}+\|\Delta_jQ(\Omega,\tilde{\tau})\|_{L^{\infty}}\big)dt')^{2}ds )^{\frac{1}{2}} \\ \notag
&\leq C(\int_0^te^{-as}(\int_0^s 2^{\frac{j}{2}}e^{-\mu2^{2j}(s-t')}\big(2^{\frac{3}{2}j}\|u\|_{L^4}\|\tilde{\tau}\|_{L^{\infty}}+2^{j}\|\Omega\|_{L^2}\|\tilde{\tau}\|_{L^{\infty}}\big)dt')^2ds)^{\frac{1}{2}}\\ \notag
&\leq
C(\int_0^te^{-as}(\int_0^s 2^{2j}e^{-\mu2^{2j}(s-t')}\|u\|_{H^1}\|\tilde{\tau}\|_{L^{\infty}}dt')^2ds)^{\frac{1}{2}}\\ \notag
&\leq CH^{\frac 1 2}_0\|\tau_0\|_{L^{\infty}}(\int_0^te^{-as}(\int_0^s2^{2j}e^{-\mu2^{2j}(s-t')}dt')^2ds)^{\frac{1}{2}}\\ \notag
&=C\mu^{-1}H^{\frac 1 2}_0\|\tau_0\|_{L^{\infty}}(\int_0^te^{-as}(1-e^{-\mu 2^{2j}s})^2ds)^{\frac{1}{2}}\\ \notag
&\leq C\mu^{-1}a^{-\frac{1}{2}}H^{\frac 1 2}_0\|\tau_0\|_{L^{\infty}}.
\end{align}
According to \eqref{7ineq9'})-\eqref{7ineq11}, we deduce that
\begin{align}\label{7ineq12}
\|e^{\frac{a}{2}t}\tau\|_{\tilde{L}^2(B^{\frac{1}{2}}_{\infty,\infty})} &\leq (\int_0^t 2^{-1}\|\Delta_{-1} e^{\frac{a}{2}s}\tau\|^2_{L^{\infty}} ds)^{\frac{1}{2}}+\sup_{j\geq 0}(\int_0^t 2^{j}\|\Delta_j e^{\frac{a}{2}s}\tau\|^2_{L^{\infty}} ds)^{\frac{1}{2}}  \\ \notag
&\leq C(a^{-\frac{1}{2}}+\mu^{-\frac{1}{2}})\|\tau_0\|_{L^4}+ C\mu^{-1}a^{-\frac{1}{2}}H^{\frac 1 2}_0\|\tau_0\|_{L^{\infty}}.
\end{align}
We thus complete the proof of Proposition \ref{7lemm2}.
\end{proof}
\subsection{Local well-posedness}
\begin{prop}\label{5prop1}
Let $(u_0,\tau_0)\in B^1_{\infty,1}\times B^0_{\infty,1}$. There exists a time $T>0$ such that $(\ref{eq2})$ has a solution $(u,\tau)\in L^{\infty}([0,T);B^1_{\infty,1}\times B^0_{\infty,1})$.
\end{prop}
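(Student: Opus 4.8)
The plan is to build the solution by a standard linearisation--iteration scheme; the one genuinely delicate point is that $\operatorname{div}\tau$ in the velocity equation formally costs one derivative relative to the regularity $B^0_{\infty,1}$ of $\tau$, and this missing derivative must be recovered from the parabolic smoothing of the $\tau$-equation, which produces an $L^1$-in-time bound of $\tau$ in $B^2_{\infty,1}$.

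\emph{Step 1 (scheme).} Set $(u^0,\tau^0)=(u_0,\tau_0)$ and, given $(u^n,\tau^n)$, define $(u^{n+1},\tau^{n+1})$ by the linear problems
\[
\partial_tu^{n+1}+u^n\cdot\nabla u^{n+1}+\nabla P^{n+1}=\operatorname{div}\tau^n,\qquad \operatorname{div}u^{n+1}=0,
\]
\[
\partial_t\tau^{n+1}+u^n\cdot\nabla\tau^{n+1}+a\tau^{n+1}-\mu\Delta\tau^{n+1}=-Q(\Omega^n,\tau^n),
\]
with data $(u_0,\tau_0)$, where $\Omega^n$ is the vorticity tensor of $u^n$ and $\nabla P^{n+1}=\nabla\Delta^{-1}\big(\partial_i\partial_j\tau^n_{ij}-\partial_i\partial_j(u^n_iu^{n+1}_j)\big)$ enforces the divergence-free constraint. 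Each step is solvable in the required class by the linear $B^1_{\infty,1}$ transport estimate of \cite{Bahouri2011} and by Lemma~\ref{transportdiffusion}.

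\emph{Step 2 (uniform bound).} I would close a bound on $E_n(T)=\|u^n\|_{L^\infty_TB^1_{\infty,1}}+\|\tau^n\|_{L^\infty_TB^0_{\infty,1}}+\|\tau^n\|_{L^1_TB^2_{\infty,1}}$. For $u^{n+1}$ the borderline transport estimate in $B^1_{\infty,1}$ (valid since the drift $u^n$ satisfies $\nabla u^n\in B^0_{\infty,1}\hookrightarrow L^\infty$) gives
\[
\|u^{n+1}\|_{L^\infty_TB^1_{\infty,1}}\le Ce^{C\int_0^T\|\nabla u^n\|_{B^0_{\infty,1}}ds}\big(\|u_0\|_{B^1_{\infty,1}}+\|\operatorname{div}\tau^n\|_{L^1_TB^1_{\infty,1}}\big),
\]
and the decisive point is $\|\operatorname{div}\tau^n\|_{L^1_TB^1_{\infty,1}}\le\|\tau^n\|_{L^1_TB^2_{\infty,1}}\le E_n(T)$, so the lost derivative is exactly the parabolic-gain term we carry in $E_n$. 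For $\tau^{n+1}$, Lemma~\ref{transportdiffusion} with $s=0$, $p=r=\infty$, $\nu=\mu$, used once with $\rho=\infty$ and once with $\rho=1$, controls $\|\tau^{n+1}\|_{L^\infty_TB^0_{\infty,1}}+\|\tau^{n+1}\|_{L^1_TB^2_{\infty,1}}$ by $\|\tau_0\|_{B^0_{\infty,1}}$, the drift factor $e^{C\int_0^T\|\nabla u^n\|_{B^0_{\infty,1}}ds}$, the zeroth-order piece $\|a\tau^{n+1}\|_{L^1_TB^0_{\infty,1}}\le aT\|\tau^{n+1}\|_{L^\infty_TB^0_{\infty,1}}$ (absorbed for $T$ small), and $\|Q(\Omega^n,\tau^n)\|_{L^1_TB^0_{\infty,1}}$. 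Since $B^0_{\infty,1}$ is not an algebra, I would estimate this last term by the refined product law of Lemma~\ref{Lemma1},
\[
\|Q(\Omega^n,\tau^n)\|_{B^0_{\infty,1}}\le C\|\nabla u^n\|_{L^\infty}\|\tau^n\|_{B^0_{\infty,1}}+C\|\nabla u^n\|_{B^0_{\infty,1}}\|\tau^n\|_{B^{\varepsilon}_{\infty,\infty}},
\]
then interpolate $\|\tau^n\|_{B^\varepsilon_{\infty,\infty}}\lesssim\|\tau^n\|_{B^0_{\infty,1}}^{1-\varepsilon/2}\|\tau^n\|_{B^2_{\infty,1}}^{\varepsilon/2}$ and apply Hölder in time, so that every nonlinear contribution picks up a positive power of $T$. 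Collecting these estimates and using $\int_0^T\|\nabla u^n\|_{B^0_{\infty,1}}ds\le TE_n(T)$ yields $E_{n+1}(T)\le C_0+P(T)\,E_n(T)\big(1+E_n(T)\big)$ with $C_0$ depending only on $\|u_0\|_{B^1_{\infty,1}},\|\tau_0\|_{B^0_{\infty,1}},a,\mu$ and $P(T)\to0$ as $T\to0^+$. Fixing $M=2C_0$ and then $T$ so small that $P(T)(1+M)\le\tfrac12$ closes the induction $E_n(T)\le M$ for all $n$.

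\emph{Step 3 (convergence and conclusion).} Because $B^1_{\infty,1}\hookrightarrow W^{1,\infty}(\mathbb{R}^2)$ the drift $u^n$ is Lipschitz, so no derivative is lost for the differences $\delta u^n=u^{n+1}-u^n$, $\delta\tau^n=\tau^{n+1}-\tau^n$; estimating them by the same linear inequalities in the one-notch-weaker space $L^\infty_TB^0_{\infty,1}\times\big(L^\infty_TB^{-1}_{\infty,1}\cap L^1_TB^1_{\infty,1}\big)$ and using the uniform bound $E_n\le M$, after shrinking $T$ once more one obtains a contraction, hence $(u^n,\tau^n)$ converges in that weaker norm to a limit $(u,\tau)$. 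The uniform bound together with lower semicontinuity of the Besov norms places $(u,\tau)\in L^\infty([0,T);B^1_{\infty,1}\times B^0_{\infty,1})$, and passing to the limit in the scheme shows it solves \eqref{eq2}; the same contraction also yields uniqueness in this class. The main obstacle is thus Step~2: one must keep $\tau$ in $L^1_TB^2_{\infty,1}$ inside the iteration space and check, despite $B^0_{\infty,1}$ failing to be an algebra, that each nonlinear term acquires a positive power of $T$, which is precisely what forces the combined use of Lemma~\ref{Lemma1} and Besov interpolation rather than a naive bound.
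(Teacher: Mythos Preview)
Your proposal is correct and follows the same core strategy as the paper: carry $\|\tau\|_{L^1_TB^2_{\infty,1}}$ in the iteration norm so that the parabolic smoothing of the $\tau$-equation exactly compensates the derivative lost to $\operatorname{div}\tau$ in the velocity equation, then close for small $T$. The only differences are in packaging---the paper works directly with an a priori bound on the nonlinear solution (Duhamel for $\tau$ with the heat semigroup, writing $u\cdot\nabla\tau=\operatorname{div}(u\otimes\tau)$ as forcing rather than as drift, and interpolating $\|\tau\|_{L^2_TB^1_{\infty,1}}\le\|\tau\|_{L^\infty_TB^0_{\infty,1}}^{1/2}\|\tau\|_{L^1_TB^2_{\infty,1}}^{1/2}$ in place of your $B^\varepsilon_{\infty,\infty}$ interpolation) and closes by a bootstrap with an explicit choice of $T$, leaving the actual construction and uniqueness implicit; your iteration scheme with Lemma~\ref{transportdiffusion} is if anything more complete.
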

\begin{proof}
Since ${\rm div}~u=0$, we have
\begin{align}\label{5ineq1}
\Delta P = {\rm div}~{\rm div}~(\tau-u\otimes u),
\end{align}
which implies that
\begin{align}\label{5ineq2}
\nabla P = \nabla\Delta^{-1}{\rm div}~{\rm div}~(\tau-u\otimes u).
\end{align}
Applying $\Delta_j$ to $(\ref{eq2})_1$, we obtain
\begin{align}\label{5ineq3}
\frac{\partial}{\partial t}\Delta_ju + u\cdot\nabla\Delta_ju = -[\Delta_j,u\cdot\nabla]u + \Delta_j{\rm div}~\tau + \nabla(-\Delta)^{-1}{\rm div}~{\rm div}~(\tau-u\otimes u).
\end{align}
Integrating $(\ref{5ineq3})$ over $[0,T]$, we infer that
\begin{align}\label{5ineq4}
\|\Delta_ju\|_{L^{\infty}} &\leq \|\Delta_ju_0\|_{L^{\infty}}+ \int_0^T \|[\Delta_j,u\cdot\nabla]u\|_{L^{\infty}}dt \\ \notag
&~~~+\int_0^T C2^j\|\Delta_j\tau\|_{L^{\infty}} + \|\nabla(-\Delta)^{-1}{\rm div}~{\rm div}~(\tau-u\otimes u)\|_{L^{\infty}}dt.
\end{align}
By virtue of Lemma \ref{Lemma5}, we obtain
\begin{align}\label{5ineq5}
\|(Id-\Delta_{-1})\nabla(-\Delta)^{-1}{\rm div}~{\rm div}~\tau\|_{B^1_{\infty,1}}\leq C\|\tau\|_{B^2_{\infty,1}},
\end{align}
and
\begin{align}\label{5ineq5'}
\|\Delta_{-1}\nabla(-\Delta)^{-1}{\rm div}~{\rm div}~\tau\|_{B^1_{\infty,1}}
&\leq C\|\Delta_{-1}\nabla(-\Delta)^{-1}{\rm div}~{\rm div}~\tau\|_{L^\infty} \\ \notag
&\leq C\|\theta E_2\ast\Delta_{-1}\nabla{\rm div}~{\rm div}~\tau\|_{L^\infty}\\ \notag
&~~~+C\|\nabla{\rm div}~{\rm div}~(1-\theta) E_2\ast\Delta_{-1}\tau\|_{L^\infty}\\ \notag
&\leq C\|\Delta_{-1}\tau\|_{L^\infty}\leq C\|\tau\|_{B^2_{\infty,1}},
\end{align}
where $E_2=\ln |x|$ denotes the fundamental solution of the harmonic equation and $\theta$ represents a smooth function supported on the unit sphere. The estimate for the other term in the pressure is similar. Hence, we deduce from \eqref{5ineq4}-\eqref{5ineq5'} that
\begin{align}\label{5ineq6}
\|u\|_{L^{\infty}_T(B^1_{\infty,1})} \leq \|u_0\|_{B^1_{\infty,1}} + C(T\|u\|^2_{L^{\infty}_T(B^1_{\infty,1})}  + \int_0^T \|\tau\|_{B^2_{\infty,1}} dt).
\end{align}
Applying $\Delta_j$ to $(\ref{eq2})_2$, we obtain
\begin{align}\label{5ineq7}
\Delta_j\tau = e^{t\mu\Delta-at}\Delta_j\tau_0 + \int_0^t e^{(t-s)\mu\Delta-a(t-s)}(\Delta_j(Q(\Omega,\tau))+{\rm div}~\Delta_j(u\otimes\tau))ds.
\end{align}
By virtue of Lemmas \ref{Lemma1} and \ref{Lemma1'}, we infer that
\begin{align}\label{5ineq10}
\|\tau\|_{L^{\infty}_T(B^0_{\infty,1})}+ \|\tau\|_{L^{1}_T(B^2_{\infty,1})} &\leq \|\tau_0\|_{B^0_{\infty,1}} + \int_0^T \|Q(\Omega,\tau)\|_{B^0_{\infty,1}}dt+\int_0^T \|u\otimes\tau\|_{B^1_{\infty,1}}dt\\ \notag
&\leq \|\tau_0\|_{B^0_{\infty,1}} + C\|u\|_{L^{\infty}_T(B^1_{\infty,1})}T^{\frac{1}{2}}\|\tau\|_{L^2_T(B^1_{\infty,1})}.
\end{align}
Notice that
\begin{align}\label{5ineq14}
\|\tau\|_{L^2_T(B^1_{\infty,1})} \leq \|\tau\|^{\frac{1}{2}}_{L^{\infty}_T(B^0_{\infty,1})}\|\tau\|^{\frac{1}{2}}_{L^1_T(B^2_{\infty,1})}.
\end{align}
According to \eqref{5ineq6}, \eqref{5ineq10} and \eqref{5ineq14}, we obtain
\begin{align}\label{5ineq17}
 \|u\|_{L^{\infty}_T(B^1_{\infty,1})}+\|\tau\|_{L^{\infty}_T(B^0_{\infty,1})} + \|\tau\|_{L^1_T(B^2_{\infty,1})}
 &\leq C(\|u_0\|_{B^1_{\infty,1}} + \|\tau_0\|_{B^0_{\infty,1}}) + CT\|u\|^2_{L^{\infty}_TB^1_{\infty,1}} \\ \notag
 &~~~+CT\|u\|^2_{L^{\infty}_T(B^1_{\infty,1})}\|\tau\|_{L^{\infty}_T(B^0_{\infty,1})}.
\end{align}
Suppose that
\begin{align}\label{5ineq18}
\|u\|_{L^{\infty}_T(B^1_{\infty,1})}+\|\tau\|_{L^{\infty}_T(B^0_{\infty,1})} + \|\tau\|_{L^1_T(B^2_{\infty,1})} \leq 4C(\|u_0\|_{B^1_{\infty,1}} + \|\tau_0\|_{B^0_{\infty,1}}),
\end{align}
and
\begin{align}\label{5ineq19}
T=\min\{1,\frac{1}{16C^2(\|u_0\|_{B^1_{\infty,1}} + \|\tau_0\|_{B^0_{\infty,1}})},\frac{1}{64C^3(\|u_0\|_{B^1_{\infty,1}} + \|\tau_0\|_{B^0_{\infty,1}})^2}\}.
\end{align}
Plugging \eqref{5ineq18} and \eqref{5ineq19} into \eqref{5ineq17} leads to
\begin{align}
\|u\|_{L^{\infty}_T(B^1_{\infty,1})}+\|\tau\|_{L^{\infty}_T(B^0_{\infty,1})} + \|\tau\|_{L^1_T(B^2_{\infty,1})} \leq 3C(\|u_0\|_{B^1_{\infty,1}} + \|\tau_0\|_{B^0_{\infty,1}}).
\end{align}
We thus complete the proof of Proposition \ref{5prop1}.
\end{proof}
\begin{prop}\label{BKM2}
Assume that $d=2$. Let $(u,\tau)$ be a strong solution of \eqref{eq2} with the initial data $(u_0,\tau_0)\in (H^1\cap B^1_{\infty,1})\times(H^1\cap B^0_{\infty,1})$. Suppose that $T^\ast$ is the maximal existence time, then the solution blows up in finite time $T^\ast<\infty$ if and only if
\begin{align}
\int_0^{T^{\ast}}\|\Omega\|_{B^0_{\infty,1}}dt = \infty.
\end{align}
\end{prop}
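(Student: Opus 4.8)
The plan is to follow the scheme of Proposition~\ref{prop2}: the content of the statement is that $T^\ast<\infty$ together with $\int_0^{T^\ast}\|\Omega\|_{B^0_{\infty,1}}\,dt<\infty$ is impossible, the reverse implication being a formality (on $[0,\infty)$ the map $t\mapsto\|\Omega\|_{B^0_{\infty,1}}$ is continuous, hence locally integrable). So I would assume $T^\ast<\infty$ and $\int_0^{T^\ast}\|\Omega\|_{B^0_{\infty,1}}\,dt<\infty$, prove that $\|u\|_{L^\infty_{T^\ast}B^1_{\infty,1}}+\|\tau\|_{L^\infty_{T^\ast}B^0_{\infty,1}}+\|\tau\|_{L^1_{T^\ast}B^2_{\infty,1}}$ and $\|(u,\tau)\|_{L^\infty_{T^\ast}H^1}$ are finite, and invoke the local theory (Proposition~\ref{5prop1}, plus $H^1$ persistence) to continue the solution past $T^\ast$, contradicting maximality.

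First I would gather the unconditional bounds. Propositions~\ref{7lemm1'}, \ref{prop1} and \ref{7lemm1} give $\|u\|_{L^2}$, $\|\tau\|_{L^\infty}$ and $\|(u,\tau)\|_{H^1}$ bounded on $[0,\infty)$, the exponential decay $\|\tau\|_{L^2}\lesssim e^{-at}\|\tau_0\|_{L^2}$, and (Proposition~\ref{7lemm2}) the weighted bound $\|e^{\frac a2 s}\tau\|_{L^2_tB^{1/2}_{\infty,\infty}}\le B_0$. From Lemma~\ref{Lemma5} (Calder\'on--Zygmund on the high frequencies, $L^2$ on $\Delta_{-1}$) I get $\|\nabla u\|_{L^\infty}\le\|\nabla u\|_{B^0_{\infty,1}}\le C\|u\|_{L^2}+C\|\Omega\|_{B^0_{\infty,1}}$, so $V(t):=C\int_0^t\|\nabla u\|_{L^\infty}\,ds$ and $\int_0^t\|\nabla u\|_{B^0_{\infty,1}}\,ds$ stay finite on $[0,T^\ast)$; in the same way Biot--Savart yields $\|u\|_{B^1_{\infty,1}}\le C\|u\|_{L^2}+C\|\Omega\|_{B^0_{\infty,1}}$, so that controlling $u$ in $B^1_{\infty,1}$ reduces to controlling $\Omega$ in $B^0_{\infty,1}$.

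Next I would estimate $\tau$ in $\widetilde L^\infty_tB^0_{\infty,1}\cap\widetilde L^1_tB^2_{\infty,1}$. Viewing $(\ref{eq2})_2$ as a transport--diffusion equation with source $g=-a\tau-Q(\Omega,\tau)$ and applying Lemma~\ref{transportdiffusion} with $s=0$, $p=\infty$, $r=\rho=\rho_1=1$, the exponential is governed by $\int_0^t\|\nabla u\|_{B^0_{\infty,1}}\,ds$, so one gets a finite prefactor times $\|\tau_0\|_{B^0_{\infty,1}}+\|g\|_{\widetilde L^1_tB^0_{\infty,1}}$. Here $\|Q(\Omega,\tau)\|_{B^0_{\infty,1}}\le C\|\Omega\|_{L^\infty}\|\tau\|_{B^0_{\infty,1}}+C\|\Omega\|_{B^0_{\infty,1}}\|\tau\|_{B^{1/2}_{\infty,\infty}}$ by Lemma~\ref{Lemma1}, with $\|\Omega\|_{L^\infty}\le\|\Omega\|_{B^0_{\infty,1}}$, and the $\|\tau\|_{B^{1/2}_{\infty,\infty}}$ factor is handled via the $L^2_t$ bound of Proposition~\ref{7lemm2} and the interpolation $\|\tau\|_{B^{1/2}_{\infty,\infty}}\lesssim\|\tau\|_{B^0_{\infty,1}}^{1/2}\|\tau\|_{B^2_{\infty,1}}^{1/2}$. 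To close I would partition $[0,T^\ast)$ into finitely many subintervals on which both the length and $\int\|\Omega\|_{B^0_{\infty,1}}$ are small; on each of them a Young/absorption argument gives $\|\tau\|_{\widetilde L^\infty B^0_{\infty,1}}+\|\tau\|_{\widetilde L^1 B^2_{\infty,1}}\lesssim\|\tau(T_k)\|_{B^0_{\infty,1}}$, and iterating over the finitely many $k$ gives the bound on $[0,T^\ast)$.

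Finally I would bound $\Gamma=\mu\Omega-R\tau$ through the refined transport estimate~\eqref{Lemma1''} applied to \eqref{eq6}: $\|\Gamma\|_{\widetilde L^\infty_tB^0_{\infty,1}}\le C(\|\Gamma_0\|_{B^0_{\infty,1}}+\|F_1+F_2+F_3\|_{L^1_tB^0_{\infty,1}})(1+V(t))$. The three sources $F_1=aR\tau$, $F_2=RQ(\Omega,\tau)$, $F_3=[R,u\cdot\nabla]\tau$ are estimated by Lemma~\ref{Lemma5} (for $R$ and $(\mathrm{Id}-\Delta_{-1})R$), Lemma~\ref{Lemma1} (product law for $Q$) and Lemma~\ref{Lemma2}(1),(3) (for $R\tau$ and the commutator), using the $\tau$-bounds from the previous step and the exponential decay of $\|\tau\|_{L^2}$, which makes the coefficients multiplying $\|\Omega\|_{B^0_{\infty,1}}$ time-integrable and thus controlled by $\int_0^{T^\ast}\|\Omega\|_{B^0_{\infty,1}}\,dt$. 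This yields $\|\Gamma\|_{\widetilde L^\infty_{T^\ast}B^0_{\infty,1}}<\infty$, hence $\|\Omega\|_{\widetilde L^\infty_{T^\ast}B^0_{\infty,1}}\le\mu^{-1}(\|\Gamma\|_{\widetilde L^\infty B^0_{\infty,1}}+\|R\tau\|_{\widetilde L^\infty B^0_{\infty,1}})<\infty$ and $\|u\|_{L^\infty_{T^\ast}B^1_{\infty,1}}<\infty$, completing the contradiction. I expect the main obstacle to be the $\tau$-estimate in $\widetilde L^\infty_tB^0_{\infty,1}\cap\widetilde L^1_tB^2_{\infty,1}$: because $B^0_{\infty,1}$ is not an algebra, the bilinear term $Q(\Omega,\tau)$ forces a genuine positive-regularity factor that can only be absorbed at the cost of the higher norm $\|\tau\|_{B^2_{\infty,1}}$ or of a stronger time norm of $\|\Omega\|_{B^0_{\infty,1}}$; reconciling this with a criterion phrased only through $\int_0^{T^\ast}\|\Omega\|_{B^0_{\infty,1}}\,dt$ is precisely what forces the subinterval decomposition and the use of the parabolic exponential decay of $\tau$.
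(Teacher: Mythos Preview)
Your outline is more intricate than what the paper actually does, and the extra layer you add is precisely where a gap appears.

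The paper does \emph{not} pass through $\Gamma$ for this proposition. It argues directly on the pair $(\|u\|_{L^\infty_TB^1_{\infty,1}},\|\tau\|_{L^\infty_TB^0_{\infty,1}})$: apply the transport estimate to the $u$--equation and Lemma~\ref{transportdiffusion} to the $\tau$--equation, add, and close by a single Gronwall step, obtaining
\[
\|u\|_{L^\infty_TB^1_{\infty,1}}+\|\tau\|_{L^\infty_TB^0_{\infty,1}}
\le C\bigl(\|u_0\|_{B^1_{\infty,1}}+\|\tau_0\|_{B^0_{\infty,1}}\bigr)
\exp\!\Big(e^{\,C\int_0^T\|\Omega\|_{B^0_{\infty,1}}\,dt}\Big).
\]
The $\Gamma$ machinery is reserved for the global existence (Theorem~\ref{th13}), not for the blow-up criterion.

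The point at which your route runs into trouble is the product estimate for $Q(\Omega,\tau)$ in $B^0_{\infty,1}$. Using Lemma~\ref{Lemma1} forces the factor $\|\tau\|_{B^{1/2}_{\infty,\infty}}$ (and your interpolation exponent should be $3/4,1/4$, not $1/2,1/2$). Neither Proposition~\ref{7lemm2} (which gives only $L^2_t$ control of $\|\tau\|_{B^{1/2}_{\infty,\infty}}$) nor the interpolation between $\widetilde L^\infty_tB^0_{\infty,1}$ and $\widetilde L^1_tB^2_{\infty,1}$ lets you bound $\int\|\Omega\|_{B^0_{\infty,1}}\|\tau\|_{B^{1/2}_{\infty,\infty}}\,dt$ under the sole hypothesis $\|\Omega\|_{B^0_{\infty,1}}\in L^1_t$; any absorption attempt produces an unwanted power $\|\Omega\|_{B^0_{\infty,1}}^{p}$ with $p>1$, which is not assumed integrable. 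The subinterval decomposition does not cure this, because the bad term is genuinely superlinear in the quantity you want to absorb. The paper sidesteps the issue by a different Bony-type product law: it puts the remainder $R(\Omega,\tau)$ into $L^2_x$ via Bernstein (recall $d=2$) and bounds it by $C\|\Omega\|_{B^0_{\infty,1}}\|\tau\|_{H^1}$, so that
\[
\|Q(\Omega,\tau)\|_{B^0_{\infty,1}}\le C\bigl(\|\Omega\|_{L^\infty}+\|\tau\|_{L^\infty}+\|\tau\|_{H^1}\bigr)\bigl(\|u\|_{B^1_{\infty,1}}+\|\tau\|_{B^0_{\infty,1}}\bigr).
\]
Since $\|\tau\|_{L^\infty}$ and $\|\tau\|_{H^1}$ are bounded unconditionally by Propositions~\ref{7lemm1'} and~\ref{7lemm1}, the coefficient in front of $\|u\|_{B^1_{\infty,1}}+\|\tau\|_{B^0_{\infty,1}}$ is $C(1+\|\Omega\|_{B^0_{\infty,1}})$, and Gronwall closes immediately. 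This $H^1$-based treatment of the Bony remainder is the missing idea in your scheme; once you use it, both the subinterval decomposition and the whole $\Gamma$ step become unnecessary.
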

\begin{proof}
According to Bony's decomposition, we obtain
\begin{align}\label{1BKM2}
\int_0^T \|Q(\Omega,\tau)\|_{B^0_{\infty,1}}dt \leq \int_0^T \|\Omega\|_{L^{\infty}}\|\tau\|_{B^0_{\infty,1}}+(\|\tau\|_{L^{\infty}}+\|\tau\|_{H^1})\|u\|_{B^1_{\infty,1}}dt.
\end{align}
By virtue of Lemma \ref{transportdiffusion} and Proposition \ref{7lemm1}, we deduce from \eqref{eq2} and \eqref{1BKM2} that
\begin{align}\label{2BKM2}
&\|\tau\|_{L^{\infty}_T(B^{0}_{\infty,1})}+\|\tau\|_{L^1_T
	(B^2_{\infty,1})}\\ \notag
&\leq Ce^{CT+C\int_0^T\|\Omega\|_{B^0_{\infty,1}}dt}\Big(\|\tau_0\|_{B^0_{\infty,1}}
+ \int_0^T\|Q(\Omega,\tau)\|_{B^{0}_{\infty,1}}dt\Big)\\ \notag
&\leq Ce^{CT+C\int_0^T\|\Omega\|_{B^0_{\infty,1}}dt}\Big(\|\tau_0\|_{B^0_{\infty,1}}
+ \int_0^T \|u\|_{B^1_{\infty,1}}+\|\Omega\|_{L^{\infty}}\|\tau\|_{B^0_{\infty,1}}dt\Big).
\end{align}
According to Lemma \ref{transportdiffusion} and \eqref{2BKM2}, we infer that
\begin{align}\label{3BKM2}
&\|u\|_{L^{\infty}_T(B^1_{\infty,1})}+\|\tau\|_{L^{\infty}_T(B^{0}_{\infty,1})}\\ \notag
&\leq Ce^{CT+C\int_0^T\|\Omega\|_{B^0_{\infty,1}}dt}\Big(\|u_0\|_{B^1_{\infty,1}}+\|\tau_0\|_{B^0_{\infty,1}}
+ \int_0^T\|u\|_{B^1_{\infty,1}}+\|\Omega\|_{L^{\infty}}\|\tau\|_{B^0_{\infty,1}}dt\Big).
\end{align}
Applying Gronwall's inequality to \eqref{3BKM2}, we deduce that
\begin{align}\label{4BKM2}
\|u\|_{L^{\infty}_TB^1_{\infty,1}}+\|\tau\|_{L^{\infty}_TB^{0}_{\infty,1}}\leq C\Big(\|u_0\|_{B^1_{\infty,1}}+\|\tau_0\|_{B^0_{\infty,1}}\Big)e^{e^{CT+C\int_0^T\|\Omega\|_{B^0_{\infty,1}}dt}}.
\end{align}
Assume that $T^\ast<\infty$ and $\int_{0}^{T^\ast}\|\Omega(t)\|_{B^0_{\infty,1}}dt<\infty$. By virtue of Proposition \ref{5prop1} and \eqref{4BKM2}, we infer that the solution can be continued beyond $[0,T^{\ast})$, which contradicts the assumption that $T^\ast$ is the maximal existence time.
\end{proof}
\subsection{Global well-posedness}
{\bf The proof of Theorem \ref{th13} :}  \\
Notice that
\begin{align}
	\int_0^T\|\mu\Omega\|_{B^0_{\infty,1}}dt \leq \int_0^T\|\Gamma\|_{B^0_{\infty,1}}dt + \int_0^T\|\tau\|_{B^0_{\infty,1}}dt.
\end{align}
We infer from Proposition \ref{7lemm2} and Proposition \ref{BKM2} that the estimate of $\|\Gamma\|_{B^0_{\infty,1}}$ will finish the proof of  global existence for \eqref{eq2} in critical Besov space. Recall that
\begin{align}\label{7ineq13}
	\frac{d}{dt}\Gamma + u\cdot\triangledown\Gamma = aR\tau + RQ(\Omega,\tau) + [R,u\cdot\triangledown]\tau \triangleq \sum_{i = 1}^3 F_i,
\end{align}
Note that
\begin{align}\label{smallness1}
	E_0=H_0(\|\tau_0\|_{B^0_{\infty,1}}+\|\tau_0\|_{L^4})~\text{and}~D_0 =\|(\mu\nabla u_0,\tau_0)\|_{B^0_{\infty,1}}.
\end{align}
Suppose that for any $t\in[0,T)$, we have
\begin{align}\label{assumption1}
	\|\Gamma\|_{B^0_{\infty,1}}\leq  c_1 a\mu e^{\frac{a}{4}t},
\end{align}
for some $c_1$ small enough. Applying Lemma \ref{Lemma1''} to \eqref{7ineq13}, we obtain
\begin{align}\label{7ineq14}
	\|\Gamma\|_{B^{0}_{\infty,1}} \leq C(\|\Gamma_0\|_{B^{0}_{\infty,1}} + \sum_{i = 1}^3\int_0^t \|F_i\|_{B^{0}_{\infty,1}} ds)(1+\int_0^t\|\nabla u\|_{L^{\infty}}ds).
\end{align}
According to Lemma \ref{Lemma5}, Propositions \ref{7lemm1} and \ref{7lemm2}, we have
\begin{align}\label{7ineq15}
	\int_0^t\|\nabla u\|_{L^{\infty}}ds &\leq \int_0^t\|\Delta_{-1}\nabla u\|_{L^{\infty}}+\|(Id-\Delta_{-1})\nabla u\|_{L^{\infty}}ds \\  \notag
	&\leq C\int_0^t\|\nabla u\|_{L^2}ds+C\int_0^t\|\Omega\|_{B^0_{\infty,1}}ds\\ \notag
	&\leq CtH^{\frac 1 2}_0+C{\mu}^{-1}(\int_0^t\|\Gamma\|_{B^0_{\infty,1}}ds +\int_0^t\|\tau\|_{L^4}ds+ \int_0^t\|\tau\|_{B^0_{\infty,1}}ds)\\ \notag
	&\leq CtH^{\frac 1 2}_0+C\big(e^{\frac{a}{4}t} + a^{-\frac{1}{2}}{\mu}^{-1}B_0\big).
\end{align}
Using the conditions \eqref{highsmallness} and \eqref{nonlinearsmallness}, the we get
\begin{align}\label{7ineq16}
	\|\tau_0\|_{L^4}\leq c^2\min\{a^2, a\mu, \mu^{\frac 3 2} a^{\frac 1 2}\},~D_0\leq ca\mu~\text{and}~E_0\leq c^2\min\{a^2\mu^2, a\mu^3,a^3\mu\}.
\end{align}
Then we have
\begin{align}\label{7ineq17}
	B_0&=C(a^{-\frac{1}{2}}+\mu^{-\frac{1}{2}})\|\tau_0\|_{L^4}+ C\mu^{-1}a^{-\frac{1}{2}}H^{\frac 1 2}_0\|\tau_0\|_{L^{\infty}}  \\\notag
	&\leq ca^{\frac{1}{2}}\mu+C\mu^{-1}a^{-\frac{1}{2}}E^{\frac 1 2}_0 D^{\frac 1 2}_0
	\\\notag
	&\leq 2ca^{\frac{1}{2}}\mu.
\end{align}
By virtue of \eqref{7ineq15}-\eqref{7ineq17}, we obtain $1+\int_0^t\|\nabla u\|_{L^{\infty}}ds \leq CtH^{\frac 1 2}_0+Ce^{\frac{a}{4}t}$. This together with the condition \eqref{highsmallness} implies that
\begin{align}\label{7ineq18}
	C\|\Gamma_0\|_{B^{0}_{\infty,1}}(1+\int_0^t\|\nabla u\|_{L^{\infty}}ds)
	\leq C(tH^{\frac 1 2}_0+e^{\frac{a}{4}t})(D_0+\|\tau_0\|_{L^4})
	\leq \frac {c_1}{10}a\mu e^{\frac{a}{4}t}.
\end{align}
According to Lemma \ref{Lemma5} and Proposition \ref{7lemm2}, we deduce that
\begin{align}\label{7ineq19}
	\int_0^t \|F_1\|_{B^{0}_{\infty,1}} ds &\leq \int_0^t a\|\Delta_{-1}R\tau\|_{L^{\infty}} + a\|(Id - \Delta_{-1})R\tau\|_{B^{0}_{\infty,1}} ds \\ \notag
	&\leq C\int_0^t a\|\tau\|_{L^4} ds+C\int_0^t a\|\tau\|_{B^{0}_{\infty,1}} ds\\ \notag
	&\leq C\|\tau_0\|_{L^4}+ Ca(\int_0^te^{as}\|\tau\|^2_{B^{0}_{\infty,1}}ds)^{\frac{1}{2}}(\int_0^te^{-as}ds)^{\frac{1}{2}}\\ \notag
	&\leq C\|\tau_0\|_{L^4}+Ca^{\frac{1}{2}}B_0.
\end{align}
Thus we infer from \eqref{7ineq16}, \eqref{7ineq17} and \eqref{7ineq19} that
\begin{align}\label{7ineq20}
	C\int_0^t \|F_1\|_{B^{0}_{\infty,1}} ds(1+\int_0^t\|\nabla u\|_{L^{\infty}}ds) &\leq C(\|\tau_0\|_{L^4}+a^{\frac{1}{2}}B_0)(tH^{\frac 1 2}_0+e^{\frac{a}{4}t}) \\ \notag
	&\leq \frac{c_1}{20}a\mu e^{\frac{a}{4}t}+C(\|\tau_0\|_{L^4}+a^{\frac{1}{2}}B_0)tH^{\frac 1 2}_0 \\ \notag
	&\leq \frac{c_1}{20}a\mu e^{\frac{a}{4}t}+\frac C a e^{\frac a 4 t}(\|\tau_0\|^{\frac 1 2}_{L^4}E^{\frac 1 2}_0+a^{\frac 1 2}\mu^{-\frac 1 2}\|\tau_0\|^{\frac 1 2}_{L^4}E^{\frac 1 2}_0) \\ \notag
	&~~~+\frac C a e^{\frac a 4 t}\mu^{-1}E_0\\ \notag
	&\leq \frac{c_1}{10}a\mu e^{\frac{a}{4}t}.
\end{align}
By virtue of Lemma \ref{Lemma5}, Propositions \ref{7lemm1} and \ref{7lemm2}, we obtain
\begin{align}\label{7ineq21}
	\int_0^t \|F_2\|_{B^{0}_{\infty,1}} ds
	&\leq C\int_0^t \|Q(\Omega,\tau)\|_{L^{2}}ds+C\int_0^t \|Q(\Omega,\tau)\|_{B^{0}_{\infty,1}}ds \\ \notag
	&\leq \frac C a H^{\frac 1 2}_0\|\tau_0\|_{L^\infty}+C\int_0^t \|\Omega\|_{B^{0}_{\infty,1}}\|\tau\|_{B^{\frac{1}{4}}_{\infty,1}}ds \\ \notag
	&\leq  \frac C a H^{\frac 1 2}_0\|\tau_0\|_{L^\infty}+ C\mu^{-1}\int_0^t\|\Gamma\|_{B^{0}_{\infty,1}}\|\tau\|_{B^{\frac{1}{4}}_{\infty,1}}+\|R\tau\|_{B^{0}_{\infty,1}}\|\tau\|_{B^{\frac{1}{4}}_{\infty,1}}ds\\ \notag
	&\leq  \frac C a H^{\frac 1 2}_0\|\tau_0\|_{L^\infty}+C(a^{\frac{1}{2}}B_0+\mu^{-1}B_0^2).
\end{align}
Then we deduce from \eqref{7ineq16}, \eqref{7ineq17} and \eqref{7ineq21} that
\begin{align}\label{7ineq22}
	C\int_0^t \|F_2\|_{B^{0}_{\infty,1}} ds
	(1+\int_0^t\|\nabla u\|_{L^{\infty}}ds)&\leq C(\frac 1 a H^{\frac 1 2}_0\|\tau_0\|_{L^\infty}+a^{\frac{1}{2}}B_0+\mu^{-1}B_0^2)(tH^{\frac 1 2}_0+e^{\frac{a}{4}t}) \\ \notag
	&\leq \frac{c_1}{20}a\mu e^{\frac{a}{4}t}+ \frac C a H^{\frac 1 2}_0\|\tau_0\|_{L^\infty}(tH^{\frac 1 2}_0+e^{\frac{a}{4}t}) \\ \notag
	&\leq \frac{c_1}{10}a\mu e^{\frac{a}{4}t}.
\end{align}
We infer from Lemma \ref{Lemma2} that
\begin{align}\label{7ineq23}
	\|[R,u\cdot\nabla]\tau\|_{B^{0}_{\infty,1}} &\leq C(\|\Omega\|_{L^{\infty}}+\|\Omega\|_{L^4})(\|\tau\|_{B^{\frac{1}{4}}_{\infty,1}} + \|\tau\|_{L^4})\\ \notag
	&\leq C(\|\Omega\|_{L^{\infty}}+\|u\|_{H^1})(\|\tau\|_{B^{\frac{1}{4}}_{\infty,1}} + \|\tau\|_{L^4}) \\ \notag
	&\leq C(\mu^{-1}\|\Gamma\|_{L^{\infty}}+\mu^{-1}\|\tau\|_{B^{\frac{1}{4}}_{\infty,1}}+\mu^{-1}\|\tau\|_{L^4}+\|u\|_{H^1})(\|\tau\|_{B^{\frac{1}{4}}_{\infty,1}} + \|\tau\|_{L^4}).
\end{align}
According to \eqref{7ineq23}, Propositions \ref{7lemm2} and \ref{7lemm1'}, we get
\begin{align}\label{7ineq24}
	\int_0^t \|F_3\|_{B^{0}_{\infty,1}} ds
	&\leq  \int_0^t C(\mu^{-1}\|\Gamma\|_{L^{\infty}}+\mu^{-1}\|\tau\|_{B^{\frac{1}{4}}_{\infty,1}}+\mu^{-1}\|\tau\|_{L^4}+\|u\|_{H^1})(\|\tau\|_{B^{\frac{1}{4}}_{\infty,1}} + \|\tau\|_{L^4})ds \\ \notag
	&\leq  C(a^{\frac{1}{2}}B_0+\mu^{-1}B_0^2+\|\tau_0\|_{L^4}+a^{-\frac{1}{2}}\mu^{-1}\|\tau_0\|_{L^4}B_0 \\ \notag
	&~~~+a^{-\frac{1}{2}}H^{\frac 1 2}_0B_0+a^{-1}H^{\frac 1 2}_0\|\tau_0\|_{L^4}).
\end{align}
Using the conditions \eqref{nonlinearsmallness}, we have $\|\tau_0\|_{L^4}\leq c^2\min\{a\mu,a^{2}\}$.
Then we deduce from \eqref{7ineq24} that
\begin{align}\label{7ineq25}
	C\int_0^t \|F_3\|_{B^{0}_{\infty,1}} ds(1+\int_0^t\|\nabla u\|_{L^{\infty}}ds) &\leq  \frac{c_1}{30}a\mu e^{\frac{a}{4}t}+C(a^{-\frac{1}{2}}H^{\frac 1 2}_0B_0+a^{-1}H^{\frac 1 2}_0\|\tau_0\|_{L^4})e^{\frac{a}{4}t}\\ \notag
	&~~~+C(a^{-\frac{1}{2}}H^{\frac 1 2}_0B_0+a^{-1}H^{\frac 1 2}_0\|\tau_0\|_{L^4})tH^{\frac 1 2}_0\\ \notag
	&\leq \frac{c_1}{20}a\mu e^{\frac{a}{4}t}+C(a^{-1}\mu^{-1}E_0+E^{\frac 1 2}_0)e^{\frac{a}{4}t}\\ \notag
	&\leq \frac{c_1}{10}a\mu e^{\frac{a}{4}t},
\end{align}
where we use the conditions $E_0\leq c^2\min\{a^2\mu^2,a^{\frac 5 2}\mu^{\frac 3 2}\}$ and $H^{\frac 3 2}_0\|\tau_0\|_{L^\infty}\leq c^2a^3\mu^2$.
Combining above estimates for \eqref{7ineq14}, we infer that
\begin{align}\label{7ineq26}
	\|\Gamma\|_{B^{0}_{\infty,1}}\leq \frac{c_1}{2}a\mu e^{\frac{a}{4}t},
\end{align}
which implies that $T^\ast=+\infty$.
We thus complete the proof of Theorem \ref{th13}.
\hfill$\Box$
\section{Large time behavior for the noncorotation inviscid Oldroyd-B model}
In this section we consider large time behavior of global solutions in $H^1$ for \eqref{eq0} with $\nu=0$. For simplify, the other parameters in \eqref{eq0} will be taken as the constant 1.

For the reader's convenience, we first recall the following theorem.
\begin{theo}\cite{2015Elgindi}\label{th3}
Let $d=2~and~s>2$. Assume that $a=\mu=1$. Let $(u,\tau)$ be a strong solution of \eqref{eq0} with $\nu=0$ and the initial data $(u_0,\tau_0)\in H^s$. Then, there exists some sufficiently small constant $\delta$ such that if
\begin{align}
\|(u_0,\tau_0)\|_{H^1}+\|(\Omega_0,\tau_0)\|_{B^0_{\infty,1}}\leq \delta,~~~~\Omega_0={\rm curl}~u_0,
\end{align}
then the system \eqref{eq0} admits a unique global strong solution $(u,\tau)\in C([0,\infty); H^s)$. Moreover, the energy estimation for $(u,\tau,\Gamma)$ with $\Gamma=\Omega-R\tau$ implies that
\begin{align}\label{estimate}
\frac{d}{dt}\|(u,\tau)\|^2_{H^1}  + \|\nabla u\|^2_{L^2} + \|\tau\|^2_{H^2} \leq 0.
\end{align}
\end{theo}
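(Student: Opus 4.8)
\emph{Sketch of the argument (following \cite{2015Elgindi}).}
This is the global well-posedness theorem of Elgindi and Rousset for the general Oldroyd-B type model \eqref{eq0} with $\nu=0$; here is how I would organize the proof. Local existence and uniqueness of a solution $(u,\tau)\in C([0,T);H^s)$ with $\tau\in L^2([0,T);H^{s+1})$ is classical (Friedrichs regularization, parabolic smoothing of the $\tau$-equation), so everything rests on a global a priori bound, which I would obtain by a continuity/bootstrap argument on the low-order quantity $\mathcal{N}(t):=\|(u,\tau)(t)\|_{H^1}+\|(\Omega,\tau)(t)\|_{B^0_{\infty,1}}$: assume $\mathcal{N}\le 2\delta$ on the maximal interval $[0,T^\ast)$, improve this to $\mathcal{N}\le\delta$ when $\mathcal{N}(0)$ is small enough, and invoke the blow-up criterion (the analogue for \eqref{eq0} of Propositions \ref{prop2} and \ref{BKM2}: blow-up occurs only if $\int_0^{T^\ast}\|\Omega\|_{B^0_{\infty,1}}dt=\infty$) to conclude $T^\ast=\infty$ and to propagate the full $H^s$ regularity. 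The whole mechanism rests on the structural unknown $\Gamma:=\Omega-R\tau$, $R=\Delta^{-1}curl\,div$.

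Taking the curl of $\eqref{eq0}_1$ gives $\partial_t\Omega+u\cdot\nabla\Omega=curl\,div\,\tau=\Delta R\tau$; the right-hand side is of the order of the unknown and cannot be treated perturbatively. Applying $R$ to $\eqref{eq0}_2$ and using $RD(u)=\Omega$ (up to a harmless constant; for general $\mu$ one uses $\Gamma=\Omega-\mu^{-1}R\tau$), one checks that $\mu\Delta R\tau$ cancels the bad term $\Delta R\tau$ and that the presence of $\alpha D(u)$ in $\eqref{eq0}_2$ — absent from the co-rotation model \eqref{eq2} — produces a genuine damping term: $\Gamma$ satisfies
\[
\partial_t\Gamma+u\cdot\nabla\Gamma+c_0\Gamma = c_1R\tau+R\,Q(\nabla u,\tau)+[R,u\cdot\nabla]\tau ,\qquad c_0\sim \alpha/\mu>0 .
\]
Each forcing term is admissible: $R\tau$ is controlled in $L^2$ and in $B^0_{\infty,1}$ by $\|\tau\|_{H^2}$ resp.\ $\|\tau\|_{B^0_{\infty,1}}$ (Lemma \ref{Lemma5}), $R\,Q(\nabla u,\tau)$ is quadratic and handled by the product law (Lemma \ref{Lemma1}) together with Lemma \ref{Lemma5}, and $[R,u\cdot\nabla]\tau$ by the commutator estimates of Lemma \ref{Lemma2}. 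All of these are small or time-integrable once $\mathcal{N}$ is small, using that $\tau$ stays small in $L^\infty$ and enjoys parabolic smoothing (the $\tau$-equation has the damping $-a\tau$, the dissipation $\mu\Delta\tau$, and the small forcing $\alpha D(u)$).

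The energy inequality \eqref{estimate} follows by adding three identities. (i) The $L^2$ identities $\tfrac12\tfrac{d}{dt}\|u\|_{L^2}^2=-\langle\tau,D(u)\rangle$ and, after testing $\eqref{eq0}_2$ by $\alpha^{-1}\tau$, $\tfrac1{2\alpha}\tfrac{d}{dt}\|\tau\|_{L^2}^2+\tfrac a\alpha\|\tau\|_{L^2}^2+\tfrac\mu\alpha\|\nabla\tau\|_{L^2}^2=\langle D(u),\tau\rangle-\tfrac1\alpha\langle Q(\nabla u,\tau),\tau\rangle$: their sum cancels $\pm\langle\tau,D(u)\rangle$ and leaves only the cubic remainder $\langle D(u)\tau,\tau\rangle\lesssim\|\nabla u\|_{L^2}\|\tau\|_{L^2}\|\nabla\tau\|_{L^2}$ (two-dimensional Gagliardo--Nirenberg, Lemma \ref{Lemma3}), absorbable by the $\mu$-dissipation under smallness. (ii) The same computation applied to $(\nabla u,\nabla\tau)$, with Lemmas \ref{Lemma3} and \ref{Lemma4} for the commutators, gives the $\dot H^1$ analogue, carrying the dissipation $\mu\|\nabla^2\tau\|_{L^2}^2$ and only small quadratic/cubic right-hand sides. (iii) The $L^2$ estimate for the $\Gamma$-equation: $\tfrac12\tfrac{d}{dt}\|\Gamma\|_{L^2}^2+c_0\|\Gamma\|_{L^2}^2\lesssim\|\tau\|_{H^2}^2+(\text{quadratic})$. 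Since $\|\nabla u\|_{L^2}\simeq\|\Omega\|_{L^2}\le\|\Gamma\|_{L^2}+\|R\tau\|_{L^2}\le\|\Gamma\|_{L^2}+C\|\tau\|_{L^2}$, adding a small fixed multiple $\kappa$ of (iii) to (i)$+$(ii) and absorbing all quadratic terms yields
\[
\frac{d}{dt}\Big(\|(u,\tau)\|_{H^1}^2+\kappa\|\Gamma\|_{L^2}^2\Big)+c\Big(\|\nabla u\|_{L^2}^2+\|\tau\|_{H^2}^2\Big)\le 0 ,
\]
and since $\|(u,\tau)\|_{H^1}^2+\kappa\|\Gamma\|_{L^2}^2\simeq\|(u,\tau)\|_{H^1}^2$ this is \eqref{estimate}; in particular $\|(u,\tau)(t)\|_{H^1}$ is non-increasing and $\nabla u\in L^2([0,\infty);L^2)$, $\tau\in L^2([0,\infty);H^2)$.

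Finally, to close the bootstrap and propagate $H^s$: writing the $\Gamma$-equation as a transport(-diffusion) equation and applying Lemma \ref{transportdiffusion} (or the refined estimate \eqref{Lemma1''}) together with Lemmas \ref{Lemma1}, \ref{Lemma2}, \ref{Lemma5}, one bounds $\|\Gamma\|_{B^0_{\infty,1}}$ by $\|\Gamma_0\|_{B^0_{\infty,1}}$, the time-integrable forcing, and $\int_0^t\|\nabla u\|_{L^\infty}ds$, the last being in turn controlled by $\|u\|_{L^2}+\|\Gamma\|_{B^0_{\infty,1}}+\|\tau\|_{B^0_{\infty,1}}$ via Lemmas \ref{Lemma5} and \ref{Lemma5'}; exploiting the damping of $\Gamma$ and the $L^\infty$-smallness of $\tau$ one shows that $\int_0^{T^\ast}\|\Omega\|_{B^0_{\infty,1}}dt$ cannot blow up at a finite $T^\ast$, which both closes the continuity argument and forces $T^\ast=\infty$. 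The $H^s$ bound is then standard: $\tfrac{d}{dt}\|(u,\tau)\|_{H^s}^2\lesssim(\|\nabla u\|_{L^\infty}+1)\|(u,\tau)\|_{H^s}^2+C_\mu(\|\nabla u\|_{L^2}^2+\|\tau\|_{L^\infty}^2)\|(u,\tau)\|_{H^s}^2$ (Lemmas \ref{Lemma3}, \ref{Lemma4}, \ref{Lemma6}), followed by Gronwall; uniqueness is obtained by an $L^2$ (or $H^{s-1}$) estimate for the difference of two solutions. I expect the genuine obstacle to be exactly the role of $\Gamma$ — it is the only device converting the order-one coupling between $curl\,div\,\tau$ and $\mu\Delta\tau$ into usable damping — together with the delicate bookkeeping needed to turn the three energy identities into the sharp, sign-definite inequality \eqref{estimate} and to control the commutator $[R,u\cdot\nabla]\tau$ at the borderline regularity $B^0_{\infty,1}$ so that the continuity argument closes.
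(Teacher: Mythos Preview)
The paper does not give a proof of this statement: Theorem~\ref{th3} is explicitly tagged with the citation \cite{2015Elgindi} and is introduced with ``For the reader's convenience, we first recall the following theorem,'' after which the text proceeds directly to Proposition~\ref{prop3} without any argument. So there is nothing in the present paper to compare your sketch against.

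That said, your outline is a faithful account of the Elgindi--Rousset strategy and is consistent with how the present paper itself manipulates the structural unknown $\Gamma$ in Sections~3 and~4 (the equation \eqref{eq6}, the commutator and singular-integral Lemmas~\ref{Lemma2} and~\ref{Lemma5}, the B--K--M criteria in Propositions~\ref{prop2} and~\ref{BKM2}). One small point of care: in your step~(iii) you write the $\Gamma$-bound as $\tfrac12\tfrac{d}{dt}\|\Gamma\|_{L^2}^2+c_0\|\Gamma\|_{L^2}^2\lesssim\|\tau\|_{H^2}^2+(\text{quadratic})$, but the linear forcing $c_1 R\tau$ contributes a term of size $\|R\tau\|_{L^2}\|\Gamma\|_{L^2}\lesssim \|\tau\|_{L^2}\|\Gamma\|_{L^2}$, which is linear, not quadratic; it is absorbed by Young's inequality using only $\|\tau\|_{L^2}^2$, so the conclusion is unchanged, but the bookkeeping should reflect that. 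Otherwise your derivation of \eqref{estimate} via the weighted sum $\|(u,\tau)\|_{H^1}^2+\kappa\|\Gamma\|_{L^2}^2$ is exactly the mechanism the paper relies on when it invokes ``the energy estimation for $(u,\tau,\Gamma)$'' in the statement.
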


Similar to \cite{He2009} and \cite{Luo-Yin}, we can cancel ${\rm div}~\tau$ in Fourier space and prove the following initial time decay rate of $(u,\tau)$ in $H^1$ by the Fourier splitting method and the bootstrap argument.
\begin{prop}\label{prop3}
Under the condition in Theorem $\ref{th2}$. Then there exists $C>0$ such that for any $l\in N$ and $t>0$, we have
\begin{align}\label{decay1}
\|(u,\tau)\|_{H^1} \leq C\ln^{-l}(e+t).
\end{align}
\end{prop}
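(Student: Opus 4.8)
The plan is to run a Fourier splitting argument of Schonbek type, bootstrapping the logarithmic decay exponent $l$ one unit at a time, starting from the energy inequality \eqref{estimate}. First I would set $S(t)=\{\xi:|\xi|^2\le g(t)\}$ with a slowly growing radius $g(t)$ to be chosen (the right choice here, because we only expect logarithmic rates, is $g(t)=\frac{K}{(e+t)\ln(e+t)}$ or a similar profile whose integral against $\frac{1}{1+t}$ produces logarithms); adding $g(t)\|(u,\tau)\|_{L^2}^2$ to both sides of \eqref{estimate} and using $\|\nabla u\|_{L^2}^2+\|\tau\|_{H^2}^2\gtrsim \int_{S(t)^c}|\xi|^2(|\hat u|^2+|\hat\tau|^2)\,d\xi \gtrsim g(t)\int_{S(t)^c}(|\hat u|^2+|\hat\tau|^2)\,d\xi$, one obtains
\begin{align}\label{pr3-split}
\frac{d}{dt}\|(u,\tau)\|_{H^1}^2 + g(t)\|(u,\tau)\|_{H^1}^2 \le C g(t)\int_{S(t)}\big(|\hat u(\xi)|^2+|\hat\tau(\xi)|^2\big)\,d\xi.
\end{align}
The task then reduces to controlling the low-frequency integral on the right.

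The heart of the matter is the low-frequency estimate, and this is where the structural trick flagged in the introduction enters: in Fourier space $\widehat{div\,\tau}$ is $O(|\xi|)$, so the quadratic terms $\widehat{u\cdot\nabla u}$, $\widehat{u\cdot\nabla\tau}$, $\widehat{Q(\nabla u,\tau)}$ and $\widehat{D(u)}$ are all $O(|\xi|)\cdot(\text{bounded in }L^1)$ once we know $(u,\tau)\in L^\infty_t L^2$ and (from \eqref{estimate}, after integrating) $\int_0^\infty\|\nabla u\|_{L^2}^2+\|\tau\|_{H^2}^2<\infty$, together with the assumed $(u_0,\tau_0)\in L^1$. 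Concretely I would write Duhamel for $\hat u$ and $\hat\tau$ (the $\tau$ equation has the damping $e^{-(a+\mu|\xi|^2)t}$, which only helps), bound $|\hat u(t,\xi)|+|\hat\tau(t,\xi)|\le C(|\widehat{u_0}|+|\widehat{\tau_0}|) + C|\xi|\int_0^t(\|u\|_{L^2}^2+\|u\|_{L^2}\|\tau\|_{H^1}+\ldots)\,ds$, so that on $S(t)$,
\begin{align}\label{pr3-lowfreq}
\int_{S(t)}\big(|\hat u|^2+|\hat\tau|^2\big)\,d\xi \le C g(t)\Big(\|(u_0,\tau_0)\|_{L^1}^2 + \big(\textstyle\int_0^t h(s)\,ds\big)^2\Big),
\end{align}
where $h(s)=\|u\|_{L^2}^2+\|\tau\|_{H^1}^2+\cdots$. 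With only the bare energy bound, $\int_0^t h(s)\,ds$ may grow linearly in $t$; this is precisely why one bootstraps. Feeding the induction hypothesis \eqref{decay1} at level $l$ into $h(s)$ gives $\int_0^t h(s)\,ds \lesssim \int_0^t \ln^{-2l}(e+s)\,ds \lesssim t\ln^{-2l}(e+t)$, hence the right side of \eqref{pr3-split} is $\lesssim g(t)^3\big(1 + t^2\ln^{-4l}(e+t)\big)$.

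Finally I would close the Grönwall loop: from \eqref{pr3-split} with integrating factor $\exp(\int_0^t g)$, and with $g(t)\sim \frac{c}{(e+t)\ln(e+t)}$ so that $\int_0^t g \sim c\ln\ln(e+t)$ and $e^{\int_0^t g}\sim (\ln(e+t))^{c}$, a direct computation yields $\|(u,\tau)\|_{H^1}^2 \le C(\ln(e+t))^{-c} + C\ln^{-2(l+1)}(e+t)$ after one more integration by parts in time against the slowly varying weight; choosing the constant $K$ in $g$ large enough (so $c\ge 2(l+1)$) upgrades the rate from $\ln^{-l}$ to $\ln^{-(l+1)}$. The base case $l=0$ is just $\|(u,\tau)\|_{H^1}\le C$, immediate from \eqref{estimate}. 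Iterating over $l\in\mathbb{N}$ gives \eqref{decay1}. The main obstacle is the bookkeeping in \eqref{pr3-lowfreq}–\eqref{pr3-split}: one must verify that \emph{every} nonlinear term genuinely carries a factor $|\xi|$ in Fourier space (so that no term survives at $\xi=0$ to spoil the low-frequency gain), and must track the interplay between the growing factor $t$ from $\int_0^t h$, the decaying $\ln^{-2l}$ from the induction hypothesis, and the rate at which $g(t)\to 0$, so that each iteration strictly improves the exponent without the constants blowing up.
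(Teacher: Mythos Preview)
Your overall strategy---Fourier splitting with a slowly shrinking ball $S(t)$ and bootstrapping the logarithmic exponent---matches the paper's. The choice $g(t)\sim\frac{K}{(e+t)\ln(e+t)}$ is exactly what arises if one multiplies \eqref{estimate} by $f(t)=\ln^{l+3}(e+t)$ (the paper's device) and reads off the splitting radius $|\xi|^2\le Cf'(t)/f(t)$. So the scaffolding is right, and your Gr\"onwall closing computation is essentially correct.

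The gap is in your low-frequency bound. You assert that $\widehat{div\,\tau}$ and $\widehat{D(u)}$ are ``$O(|\xi|)\cdot(\text{bounded in }L^1)$'', and on that basis write the Duhamel estimate $|\hat u|+|\hat\tau|\le C(|\widehat{u_0}|+|\widehat{\tau_0}|)+C|\xi|\int_0^t(\text{norms})$. But $div\,\tau$ and $D(u)$ are \emph{linear} couplings, not quadratic: $\widehat{D(u)}=O(|\xi|)|\hat u|$ and $\widehat{div\,\tau}=O(|\xi|)|\hat\tau|$, and you have no pointwise-in-$\xi$ control on $|\hat u(t,\xi)|$ or $|\hat\tau(t,\xi)|$ (only $u_0,\tau_0\in L^1$, not $u(t),\tau(t)$). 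So the Duhamel bound as stated is false. This is precisely where the ``structural trick'' enters, but the trick is \emph{not} that these terms carry a factor $|\xi|$; it is the cancellation
\[
\mathcal{R}e\Big[i\xi^T\hat\tau\cdot\bar{\hat u}+\tfrac{i}{2}(\xi\otimes\hat u+\hat u\otimes\xi):\bar{\hat\tau}\Big]=0,
\]
which appears when you write the energy identity for $|\hat u|^2+|\hat\tau|^2$ in Fourier space (see \eqref{4ineq2}) rather than Duhamel. After this cancellation only genuinely quadratic terms survive, and integrating the resulting ODE over $S(t)\times[0,t]$ gives $\int_{S(t)}(|\hat u|^2+|\hat\tau|^2)\lesssim \frac{f'}{f}+\frac{f'}{f}\int_0^t\|u\|_{L^2}^3$; the bootstrap then closes exactly as you describe. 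Your Duhamel route can also be salvaged: substitute the $\hat\tau$ Duhamel (with its damping $e^{-a(t-s)}$) into the $\hat u$ Duhamel, obtain a self-referencing inequality $|\hat u(T)|\le(\ldots)+C|\xi|^2\int_0^T|\hat u|$, and observe that $|\xi|^2 T\le g(T)T\to 0$ on $S(T)$, so Gr\"onwall absorbs the feedback. But you must actually carry this out; as written, the linear coupling is unaccounted for.
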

\begin{proof}
Let $S_0(t) =\{\xi:f(t)|\xi|^2\leq 2C_2f'(t)\}$ with $C_2$ large enough. According to Theorem \ref{th3}, we have
\begin{align}\label{4ineq1}
\frac{d}{dt}[f(t)\|(u,\tau)\|^2_{H^1}]+C_2f'(t)\|u\|^2_{L^2} + f(t)\|\tau\|^2_{H^2}\leq Cf'(t)\int_{S_0(t)}|\hat{u}|^2 d\xi+f'(t)\|\nabla u\|^2_{L^2},
\end{align}
for some $t>0$ sufficiently large. Applying Fourier transformation to \eqref{eq0}, we obtain
\begin{align}\label{4eq1}
\left\{\begin{array}{l}
\frac{d}{dt}\hat{u} + i\xi^{T}\mathscr{F}(u \otimes u) + i\xi\hat{p} = i\xi^{T}\hat{\tau},\\
\frac{d}{dt}\hat{\tau} + \hat{\tau} +\mathscr{F}(u \cdot\nabla\tau) + |\xi|^2\hat{\tau} +\mathscr{F}Q(\nabla u,\tau)= \frac{i}{2}( \xi \otimes \hat{u} + \hat{u} \otimes \xi ).
\end{array}\right.
\end{align}
Multiplying $\eqref{4eq1}$ by $(\bar{\hat{u}},\bar{\hat{\tau}})$ and taking the real part, we deduce that
\begin{align}
\frac{1}{2}\frac{d}{dt}|\hat{u}|^2  = \mathcal{R}e[-i\xi^{T}\mathscr{F}(u \otimes u)\bar{\hat{u}}+i\xi^{T}\hat{\tau}\bar{\hat{u}}],
\end{align}
and
\begin{align}
\frac{1}{2}\frac{d}{dt}|\hat{\tau}|^2 + |\hat{\tau}|^2 + |\xi|^2|\hat{\tau}|^2=\mathcal{R}e[\mathscr{F}(u \cdot\nabla\tau):\bar{\hat{\tau}}-\mathscr{F}Q(\nabla u,\tau):\bar{\hat{\tau}}+\frac{i}{2}(\xi \otimes \hat{u} + \hat{u} \otimes \xi ):\bar{\hat{\tau}}].
\end{align}
Since $\tau$ is symmetric, we have
\begin{align}
\mathcal{R}e[i\xi^{T}\hat{\tau}\bar{\hat{u}} + \frac{i}{2}( \xi \otimes \hat{u} + \hat{u} \otimes \xi ):\bar{\hat{\tau}}] = 0,
\end{align}
which implies that
\begin{align}\label{4ineq2}
\frac{1}{2}\frac{d}{dt}(|\hat{u}|^2 + |\hat{\tau}|^2 ) + |\hat{\tau}|^2 + |\xi|^2|\hat{\tau}|^2 &= \mathcal{R}e[- i\xi^{T}\mathscr{F}(u \otimes u)\bar{\hat{u}} -\mathscr{F}(u \cdot\nabla\tau):\bar{\hat{\tau}} -\mathscr{F}Q(\nabla u,\tau):\bar{\hat{\tau}}] \\ \notag
&\leq |\xi||\mathscr{F}(u \otimes u)||\hat{u}|+|\mathscr{F}(u \cdot\nabla\tau)|^2 + |\mathscr{F}Q(\nabla u,\tau)|^2+|\hat{\tau}|^2.
\end{align}
Let $f(t)=\ln^3(e+t)$. According to Theorem \ref{th3}, we have
\begin{align*}
\int_{S_0(t)}|\hat{u}|^2+|\hat{\tau}|^2 d\xi&\leq\|(u_0,\tau_0)\|^2_{L^1}\frac{f'(t)}{f(t)} + \int_0^t \int_{S_0(t)}|\xi||\mathscr{F}(u\otimes u)||\hat{u}| d\xi ds \\
&~~~+\int_0^t \int_{S_0(t)}|\mathscr{F}(u \cdot\nabla\tau)|^2+|\mathscr{F}Q(\nabla u,\tau)|^2 d\xi ds \\
&\leq C\frac{f'(t)}{f(t)} + \int_0^t \|u\|^3_{L^2} (\int_{S_0(t)}|\xi|^2d\xi)^{\frac{1}{2}} ds  \\
&~~~+ \frac{f'(t)}{f(t)}\int_0^t \|\nabla u\|^2_{L^2}\|\tau\|^2_{L^2} + \| u\|^2_{L^2}\|\nabla\tau\|^2_{L^2}ds \\
&\leq C\frac{f'(t)}{f(t)} +C\frac{f'(t)}{f(t)}\int_0^t \|u\|^3_{L^2} ds  \\
&\leq C\frac{f'(t)}{f(t)}+C\frac{f'(t)}{f(t)}(1+t)  \\
&\leq C\ln^{-1}( e + t ).
\end{align*}
This together with \eqref{4ineq1} and \eqref{estimate} ensures that
\begin{align*}
f(t)\|(u,\tau)\|^2_{H^1} &\leq C+C\int_0^t f'(s)\ln^{-1}( e + t )ds + C\int_0^t f'(s)\|\nabla u\|^2_{L^2} ds \\
&\leq C\ln^2( e + t ),
\end{align*}
which implies that
\begin{align}\label{4ineq3}
\|(u,\tau)\|^2_{H^1}  \leq C\ln^{-1}(e + t).
\end{align}
We prove \eqref{decay1} by induction. Assume that
\begin{align}\label{4ineq4}
\|(u,\tau)\|^2_{H^1}\leq \ln^{-l} (e + t).
\end{align}
Let $f(t)=\ln^{l+3}(e+t)$. Using \eqref{4ineq4}, we can deduce that
\begin{align*}
\int_{S_0(t)}|\hat{u}|^2 + |\hat{\tau}|^2 d\xi&\leq C\frac{f'(t)}{f(t)} + \frac{f'(t)}{f(t)}\int_0^t \|u\|^3_{L^2} ds  \\
&\leq C\ln^{-\frac {3l} {2}-1}(e+t).
\end{align*}
This together with \eqref{4ineq1} and \eqref{estimate} ensures that
\begin{align*}
f(t)\|(u,\tau)\|^2_{H^1} &\leq C+C\int_0^t f'(s)\ln^{-\frac {3l} {2}-1}( e + t )ds + C\int_0^t f'(s)\|\nabla u\|^2_{L^2} ds \\
& \leq C\ln^2( e + t ),
\end{align*}
which implies that
\begin{align*}
\|(u,\tau)\|^2_{H^1} \leq C\ln^{-l-1} (e + t).
\end{align*}
We thus complete the proof of Proposition \ref{prop3}.
\end{proof}

{\bf The proof of Theorem \ref{th2} :}  \\
Now we are going to improve initial time decay rate in Proposition \ref{prop3}. Let $S(t) = \{ \xi | |\xi|^2 \leq C_2(1+t)^{-1} \}$ with sufficiently large $C_2> 0$ and $t>0$. According to Theorem \ref{th3}, we obtain
\begin{align*}
&\frac{d}{dt}(\|(u,\tau)\|^2_{H^1} )+C_2(1+t)^{-1}\|u\|^2_{L^2}+\|\tau\|^2_{H^2}\leq C(1+t)^{-1}\int_{S(t)}|\hat{u}|^2d\xi,
\end{align*}
which implies that
\begin{align}\label{4ineq5}
&\frac{d}{dt}[(1+t)^2\|(u,\tau)\|^2_{H^1}] + \frac 1 2 C_2(1+t)\|u\|^2_{L^2}+\frac 1 2(1+t)^2\|\tau\|^2_{H^2} \\ \notag
&\leq C(1+t)\int_{S(t)}|\hat{u}|^2d\xi+ C(1+t)\|\nabla u\|^2_{L^2}.
\end{align}
Integrating \eqref{4ineq2} over $S(t)\times[0,t]$ with $(\xi,s)$ and according to Theorem \ref{th3}, we can deduce that
\begin{align}\label{4ineq6}
\int_{S(t)}|\hat{u}|^2+|\hat{\tau}|^2 d\xi&\leq \frac{C}{1+t} + \int_0^t \int_{S(t)}|\xi||\mathscr{F}(u\otimes u)||\hat{u}|+|\mathscr{F}(u \cdot\nabla\tau)|^2+|\mathscr{F}Q(\nabla u,\tau)|^2 d\xi ds \\  \notag
&\leq \frac{C}{1+t} + \frac{C}{1+t}\int_0^t \|u\|^3_{L^2} ds.
\end{align}
Together with \eqref{4ineq5} and \eqref{estimate}, we infer that
\begin{align}\label{4ineq7}
(1+t)^2\|(u,\tau)\|^2_{H^1}&\leq \|(u_0,\tau_0)\|^2_{H^1} + C(1+t)+ C\int_0^t\int_0^s \|u\|^3_{L^2} ds' ds + C\int_0^t (1+s)\|\nabla u\|^2_{L^2}ds \\ \notag
& \leq  C(1+t)+ C(1+t)\int_0^t\|u\|^3_{L^2} ds + C\int_0^t \|(u,\tau)\|^2_{H^1}ds \\  \notag
& \leq  C(1+t)+ C(1+t)\int_0^t\|u\|^3_{L^2} ds.
\end{align}
Let $M(t) = \mathop{\sup}\limits_{s \in [0,t]} (1+s)\|(u,\tau)\|^2_{H^1}$. Using \eqref{4ineq7} and \eqref{decay1} with $l=2$, we obtain
\begin{align}\label{4ineq8}
& M(t) \leq C+C\int_0^t M(s)(1+s)^{-1}\ln^{-2}(e+t) ds,
\end{align}
Applying Gronwall's inequality to \eqref{4ineq8}, we get
\begin{align*}
 M(t) \leq C e^{C\int_0^t (1+s)^{-1}\ln^{-2}(e+t) ds}\leq C,
\end{align*}
which implies that
\begin{align*}
\|u\|^2_{H^1} + \|\tau\|^2_{H^1} \leq C(1 + t)^{-1}.
\end{align*}
We thus complete the proof of Theorem \ref{th2}.
\hfill$\Box$

\smallskip
\noindent\textbf{Acknowledgments} This work was partially supported by the National Natural Science Foundation of China (No.12171493), the Macao Science and Technology Development Fund (No. 0091/2018/A3), and Guangdong Province of China Special Support Program (No. 8-2015).


\phantomsection
\addcontentsline{toc}{section}{\refname}
\bibliographystyle{abbrv} 
\bibliography{OldroydBref}

\end{document}